\documentclass[11pt]{article}
\usepackage{geometry}
\usepackage{soul}
\usepackage{tikz,enumerate,enumitem, amsmath, mathtools, amsfonts, amssymb, amsthm, fontawesome, tikz-cd, mathdots, verbatim}
\usepackage[colorlinks=true,pdfpagemode=none,urlcolor=blue,linkcolor=blue,citecolor=blue]{hyperref}
\usepackage{xcolor}
\usetikzlibrary{calc} 
\mathtoolsset{showonlyrefs}

\newtheorem{thm}{Theorem}
\newtheorem{cor}{Corollary}[section]
\newtheorem{lem}{Lemma}[section]
\newtheorem{prop}{Proposition}[section]
\newtheorem{thmlit}{Theorem}[section]
\newtheorem*{prop*}{Proposition}

\theoremstyle{definition}

\theoremstyle{remark}
\newtheorem{rmk}{Remark}

\newcommand{\R}{\mathbb{R}}
\newcommand{\C}{\mathbb{C}}

\newcommand{\N}{\mathbb{N}}
\newcommand{\Z}{\mathbb{Z}}

\newcommand{\I}{\mathcal{I}}
\newcommand{\Sc}{\mathcal{S}}
\newcommand{\E}{\mathcal{E}}
\newcommand{\F}{\mathcal{F}}
\newcommand{\A}{\mathcal{A}}

\newcommand{\MO}{\mathcal{O}}

\newcommand{\vphi}{\varphi}

\DeclareMathOperator{\supp}{Supp}
\DeclareMathOperator{\Span}{Span}
\DeclareMathOperator{\Ker}{Ker}
\DeclareMathOperator{\Real}{Re}

\DeclareMathOperator{\coDim}{codim}
\DeclareMathOperator{\Res}{Res}
\DeclareMathOperator{\Fa}{Fa}

\newcommand{\el}{\ell}

\def\XXint#1#2#3{{\setbox0=\hbox{$#1{#2#3}{\int}$}
     \vcenter{\hbox{$#2#3$}}\kern-.5\wd0}}

\title{Polyhomogeneous mapping properties of the Radon transform and backprojection operator on the unit ball}
\author{Seiji Hansen\thanks{Department of Mathematics, University of California, Santa Cruz, 95064; email:sekhanse@ucsc.edu}}

\begin{document}

\maketitle

\begin{abstract}
This article covers polyhomogeneous mapping properties of the Radon transform $R$ of smooth functions on the open unit ball $\Omega\subset\R^n$ and the back-projection operator $R^*$ on $Z=(-1,1)\times S^{n-1}\subset\R\times S^{n-1}$. We construct a double $b$-fibration which desingularizes the point-hyperplane relation of $\overline{\Omega}$ as the total space of a fibration over $\overline{Z}$. We provide formulas for $R$ and $R^*$ in operations generated by the associated $b$-fibrations and sharper estimates on the polyhomogeneous mapping properties of $R$ and $R^*$ compared to classic estimates using classic Mellin functional techniques. We include a discussion of a one (complex) parameter family of normal operators associated to $R$ mapping $C^{\infty}(\overline{\Omega})$ to itself.
\end{abstract}

\section{Introduction}\label{section:intro}

\hspace{20pt} This article studies mapping properties for the Radon transform and its adjoint (the backprojection operator) in classes of conormal distributions on the unit Euclidean ball of $\R^n$, specifically polyhomogeneous, conormal functions.

\hspace{20pt} In our setting, fix $n\geq 2$, and parameterize the affine hyperplanes of $\R^n$ as the set $\Pi(s,\theta) = s\theta + \theta^{\perp}$ for $(s,\theta)\in\R\times S^{n-1}$, by a normal vector $\theta$ and a signed distance $s$. This is a $2$-to-$1$ covering where $\Pi(s,\theta) = \Pi(-s,-\theta)$. The \textit{Radon transform} sends a Schwartz class function $u\in\Sc(\R^n)$ to the function defined by
	\begin{align}\label{eq:defn-Radon}
		Ru(s,\theta) = \int_{\Pi(s,\theta)}u(x)\ d\mu_{(s,\theta)}, \qquad (s,\theta)\in\R\times S^{n-1},
	\end{align}
where $\mu_{(s,\theta)}$ is the usual $(n-1)$-dimensional Lebesgue measure on $\Pi(s,\theta)$. A hyperplane is assigned to the integral over that hyperplane. Each $Ru$ is even with respect to the involution $(s,\theta)\mapsto(-s,-\theta)$. The $L^2$-adjoint of \eqref{eq:defn-Radon} under the usual $L^2$ spaces on $\R^n$ and $\R\times S^{n-1}$ is the \textit{backprojection operator} $R^*$ which for $v\in C^{\infty}(\R\times S^{n-1})$ has expression 
	\begin{align}\label{eq:defn-backproj}
		R^*v(x) = \int_{S^{n-1}}v(x\cdot\theta,\theta)\ d\theta, \qquad x\in\R^n,
	\end{align}
where $\cdot$ is the standard inner product on $\R^n$.

\hspace{20pt} Transforms \eqref{eq:defn-Radon} and \eqref{eq:defn-backproj} are classical objects of study in integral geometry (see, e.g., \cite{radon20051,Helgason1980,oberrlin1982mapping,ehrenpreis2003}) with broad applications that include X-ray computerized tomography ($n=2$) and magnetic resonance imaging ($n=3$); see, e.g. \cite{natterer2001,Louis1984,Maass1987,Maass1991,Maass1992}. Inversion formulas for $R$ involving $R^*$ as well as stability estimates on scales of Hilbert space over $\R^n$, motivated by the Fourier slice theorem, are well-known \cite{natterer2001,Sharafutdinov2017}. A study of mapping properties of $R$ and $R^*$ on integrands modeled by asymptotic expansions in $1/|x|$ was completed by A. Katsevich in \cite{Katsevich2001}.

\hspace{20pt} In practice, a body under examination or X-ray data $u$ is compactly supported and has a specific geometry that influences the analysis of $Ru$ and $R^*u$. In this article we make the assumption that $u$ has support contained in the manifold with boundary 
	\begin{align}\label{eq:OmegaBar}
    		\overline{\Omega} := \{x\in\R^n : |x|\leq 1\},\qquad \partial\Omega = \{x\in\R^n : |x| = 1\},
	\end{align}
in which case $Ru$ has support in the set of affine hyperplanes intersecting $\overline{\Omega}$, given by the manifold with boundary 
	\begin{align}\label{eq:Zbar}
		\overline{Z} := [-1,1]\times S^{n-1}, \qquad \partial Z = \{-1\}\times S^{n-1}\cup\{1\}\times S^{n-1}.
	\end{align} 
Singular Value Decompositions for this restriction have been established in \cite{Louis1984} for various weighted $L^2$ topologies, suggesting other spaces which may capture sharp mapping properties. To push the analysis further, a number of questions remain open, including: 
\begin{enumerate}
    \item[Q1:] Given subspaces $E\subset C^\infty(\Omega)$ and $F\subset C^\infty(Z)$ of polyhomogenous conormal type near the boundary, how can we describe $R(E)$, or $R^* (F)$ ?
    \item[Q2:] Can one find weight functions $w_1,w_2$ (possibly singular at $\partial\Omega$ and $\partial Z$) such that the operator $R^* w_2 R w_1$ maps $C^\infty(\overline{\Omega})$ into itself? Can such an operator be an isomorphism? If yes, can this isomorphism be made {\em tame} relative to some scale of Hilbert spaces on $\Omega$?
\end{enumerate}
The framing of questions Q1 and Q2 can be found in the recent topical review \cite{Monard2024} concerned with boundary issues and sharp mapping properties for the geodesic X-ray transform on convex, non-trapping manifolds, where partial answers and open questions are provided on this, along with further motivations for these questions (e.g., applications to Bayesian inversions in infinite-dimensional, non-parametric settings). 

\hspace{20pt} In this article, we answer question Q1, namely we discuss the forward mapping properties of $R$ and $R^*$ on polyhomogeneous conormal spaces on $\overline{\Omega}$ and $\overline{Z}$, see Theorems \ref{RadonExpansion} and \ref{MainResult}, respectively. We also discuss some consequences such as Corollary \ref{cor:AdjustedNormal}, which also partially address Q2. 

\hspace{20pt} Our methodology follows that of Mazzeo and Monard in \cite{Mazzeo2021}, where analogous results are derived for the geodesic X-ray transform. We start by examining the well-known double-fibration of the point-hyperplane relation $E$ of $\R^n$
\begin{align}\label{eq:doublefib}
    \R\times S^{n-1} \longleftarrow E \longrightarrow \R^n,      
\end{align}
and analyze the restriction of $E$ to $\overline{\Omega}\subset\R^n$ in order to realize $R$ and $R^*$ as pushforward-pullback compositions, see e.g. \cite{Helgason1980}. The restriction of \eqref{eq:doublefib} to $Z$ and $\Omega$ remains a double-fibration but the fiber degenerates near the boundaries. We first show that this boundary degeneracy can be desingularized into a {\em double b-fibration} in the sense of Melrose \cite{Melrose1992,GrieserGruber2001}. This is the appropriate setting to express the restricted operators $R, R^*$ as pushforward-pullback compositions by b-fibrations and conormal multiplication operators, and to deduce a first estimate on the forward mapping properties for $R, R^*$ in polyhomogeneous spaces, using the classical Pushforward and Pullback Theorems of Melrose (see Appendix \ref{PullbackPushforwardTheorems}). Such a first estimate is sharp for $R$ but not for $R^*$, as it is seen to overestimate index sets. For example, when $n$ is odd, the initial estimate predicts that $R^*(C^{\infty}(\overline{Z}))\subset C^{\infty}(\overline{\Omega}) + \rho^{n-1}\log(\rho)C^{\infty}(\overline{\Omega})$ (where $\rho:= 1-|x|^2$, see Proposition \ref{Prop:Back-ProjectionEstimate} and \ref{Prop:ForwardEstimate}) while one can observe from \eqref{eq:defn-backproj} that $R^* (C^{\infty}(\overline{Z}))\subset C^{\infty}(\overline{\Omega})$ for any $n$. We resolve this discrepancy by finding a proof of the latter estimate using Mellin transform methods following \cite{Mazzeo2021}. This method opens the machinery of the original Mellin transform-based proof of the Pushforward Theorem to find smaller index set estimates from pole-cancellation mechanisms in the Mellin-variable side. In the particular case of $R^*$, this mechanism removes some of the originally predicted index sets. 

\hspace{20pt} As a benchmark, since our case $n=2$ matches the case of the X-ray transform in the Euclidean disk, our main theorems recover the results of \cite{Mazzeo2021} in that case. On the other hand, our proofs also exhibit new phenomena which depend on the parity of the dimension $n$.

\hspace{20pt} \textbf{Outline.} We first state the main results in Section \ref{SectionMainResults} and corollaries that directly follow in Subsection \ref{SubsectionCorollaries}. In the proceeding Section \ref{SectionDevelopment}, we construct the double $b$-fibration $(G,\hat\pi,\hat P)$ and the pullback-pushforward formulae for $R$ and $R^*$ and conclude with proof of the main theorems in Subsections \ref{SubsectionProofRadon} and \ref{SubsectionProofBackproj}. Background information about manifolds with corners is available in Appendix \ref{ManifoldWithCorners} and \ref{bMaps}, conormal and polyhomogeneous functions spaces and polyhomogeneous index sets in Appendix \ref{AlgebraOnManifold}, and finally in Appendix \ref{PullbackPushforwardTheorems} one can find statements of Melrose's Pullback and Pushforward theorem and in Appendix \ref{MellinBackground} a reference to how Mellin Functionals in a boundary-defining coordinate can aide our study of index sets.

\section{Statement of main results and corollaries}\label{SectionMainResults}

\hspace{20pt} The manifolds with boundary (mwb) $\overline{\Omega}$ and $\overline{Z}$, defined by \eqref{eq:OmegaBar} and \eqref{eq:Zbar}, consist of one and two boundary hypersurfaces (bhs) respectively. The bhs of $\overline{\Omega}$ has the associated boundary-defining function (bdf) $\rho(x) = 1-|x|^2$. The two bhs of $\overline{Z}$ have associated bdf $(s,\theta)\mapsto 1\mp s$. The product $\sigma(s,\theta) = 1-s^2$ is a local bdf for $\partial Z$. The Beta function is defined by the integral formula
	\begin{align}
		B(\alpha,\beta) = \int_0^1x^{\alpha-1}(1-x)^{\beta-1}dx, \qquad \alpha,\beta\in\{z\in\C:\Real(z) > 0\}.
        \label{eq:beta}
	\end{align}
The Beta function extends to a meromorphic function on $\C^2$ by the identity $B(\alpha,\beta) = \frac{\Gamma(\alpha)\Gamma(\beta)}{\Gamma(\alpha+\beta)}$, where $\Gamma$ is the Gamma function. A function is of class $C^{\infty}_{even}(S^{n-1})$ if it is $C^{\infty}(S^{n-1})$ and fixed by the antipodal map $\theta\mapsto-\theta$. We follow the convention that the binomial coefficients ${m\choose -1}$ vanish if $m\neq-1$ and evaluate to $1$ when $m=-1$. The definition of polyhomogeneous (phg) component as well as for mwb, bhs, and (local) bdf can be found in Appendices \ref{AlgebraOnManifold} and \ref{ManifoldWithCorners}. 

\begin{thm}[Phg mapping properties of $R$]\label{RadonExpansion}
Let nonzero $a\in C^{\infty}(\partial\Omega)$ and $\tilde a(x) = a(\frac{x}{|x|})$ be its radial extension, $(\gamma,\el)\in\C\times\N_0$ with $\Real(\gamma) > -1$ and $\chi\in C^{\infty}_c[0,1)$ a cutoff $\chi\equiv1$ near $0$. For every $p\in\N_0$ there exists $A_p\in C^{\infty}(S^{n-1})$ such that the Radon transform of the phg component $u(x) = \tilde a(x)\ \rho^{\gamma}\log(\rho)^{\el}\ \chi(\rho)$ has expansion near $\sigma = 0$  
	\begin{align}\label{eq:RadonExpansion}
		Ru(s,\theta)\sim \sum_{m = 0}^{\infty}\sum_{k = 0}^{\el}\sigma^{\frac{n-1}{2} + \gamma + m}\log(\sigma)^k\ A_{m,l}(\theta;\gamma,k),
	\end{align}
where $\sim$ is asymptotic equivalence in the sense of \eqref{asymptoticequivalence} and for all $(m,k)\in\N_0\times\{0,1,...,\el\}$
	\begin{align}\label{eq:Radon-coefficients}
		A_{m,k}(\theta;\gamma,\el) = {\el\choose k}\sum_{p=0}^m\textstyle A_p(\frac{s}{|s|}\theta)\displaystyle{m-1\choose p-1}\frac{\partial^{\el-k}}{\partial\eta^{\el-k}}\Big|_{\eta = \gamma}B\Big(\eta + 1,\frac{n-1}{2} + p\Big).
	\end{align}
\end{thm}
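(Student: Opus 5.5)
Our plan is a direct computation in coordinates adapted to the hyperplane $\Pi(s,\theta)$, and we do not go through the $b$-fibration machinery. By the evenness of $Ru$ we may restrict to $s>0$ near $s=1$, with $\sigma = 1-s^2$. The slice $\Pi(s,\theta)\cap\overline\Omega$ is an $(n-1)$-ball of radius $\sqrt{\sigma}$ centred at $s\theta$. We parametrize it by $x = s\theta + \sqrt{\sigma}\, r\,\omega$ with $\omega\in S^{n-2}\subset\theta^\perp$ and $r\in[0,1]$. Then $|x|^2 = s^2+\sigma r^2$, so
\[
\rho = \sigma(1-r^2), \qquad d\mu = \sigma^{\frac{n-1}{2}} r^{n-2}\,dr\,d\omega .
\]
Since $\rho\le\sigma$, we have $\chi(\rho)\equiv 1$ for $\sigma$ small. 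This gives
\[
Ru(s,\theta) = \sigma^{\frac{n-1}{2}+\gamma}\int_0^1\!\!\int_{S^{n-2}} \tilde a(s\theta+\sqrt{\sigma}r\omega)\,(1-r^2)^{\gamma}\big(\log\sigma+\log(1-r^2)\big)^{\el} r^{n-2}\,d\omega\,dr .
\]
Expanding the $\el$-th power binomially produces the factor $\binom{\el}{k}\log(\sigma)^k$ multiplying an integral containing $\log(1-r^2)^{\el-k}$. We will write that factor as $\partial_\eta^{\el-k}(1-r^2)^\eta$ evaluated at $\eta=\gamma$.

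Next we Taylor expand the angular factor in $\sqrt\sigma$. The function $\tilde a$ is smooth away from the origin, and on the slice $|x|\ge s$, which is close to $1$. The map $\sqrt{\sigma}\mapsto\tilde a(s\theta+\sqrt\sigma r\omega)$ is smooth, with $s=\sqrt{1-\sigma}$ also smooth in $\sigma$. Averaging over $\omega\in S^{n-2}$ kills the odd powers of $\sqrt\sigma\, r$. So we set
\[
\int_{S^{n-2}}\tilde a(s\theta+\sqrt\sigma r\omega)\,d\omega \sim \sum_{p\ge0}\sigma^p r^{2p}\, c_p(\theta,\sigma),
\]
where each $c_p$ is smooth in $\sigma$ and can be re-expanded in powers of $\sigma$. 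The $r$-integrals then become Beta functions via $t=r^2$:
\[
\int_0^1 r^{2p+n-2}(1-r^2)^{\eta}\,dr = \tfrac12 B\big(\eta+1,\tfrac{n-1}{2}+p\big).
\]
We differentiate $\el-k$ times in $\eta$ under the integral sign. This is legitimate because $\Real\gamma>-1$. Collecting the total power $\sigma^m$ from $\sigma^p$ together with the Taylor coefficients of $c_p$ in $\sigma$ gives a finite sum over $p\le m$. We then identify the $A_p(\theta)$, absorbing $\tfrac12$, the spherical averages, and the $s$-dependence, so that the combinatorial weight takes the stated $\binom{m-1}{p-1}$ form. The factor $\frac{s}{|s|}$ records the reduction to $s>0$ by evenness.

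For the remainder estimate in the sense of \eqref{asymptoticequivalence}, we truncate the Taylor expansion at order $N$. The remainder is $O(\sigma^{N})$ uniformly, together with its $\sigma\partial_\sigma$ and $\theta$ derivatives, because everything is smooth in $(\sqrt\sigma,\theta,r,\omega)$ and the $r$-integrand is integrable. We expect the main obstacle to be the bookkeeping that puts the coefficients in exactly the form \eqref{eq:Radon-coefficients}. One has to choose $A_p$ correctly as the $\omega$-averaged $2p$-th derivatives of $a$, possibly combined with the expansion of $s=\sqrt{1-\sigma}$. One also has to verify the binomial reorganization $\binom{m-1}{p-1}$, including the convention that covers $p=0$. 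We would try first to exhibit this as the coefficient of $\sigma^{m}$ in $\sigma^p(1-\sigma)^{-\cdots}$-type factors coming from rescaling $\tilde a$ homogeneously, i.e. $\tilde a(x)=a(x/|x|)$ with $|x|^2=1-\sigma(1-r^2)$.
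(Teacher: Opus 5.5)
Your route is essentially the paper's. The parametrization $x=s\theta+\sqrt{\sigma}\,r\omega$ is exactly the map $\Upsilon$ of \eqref{defn:Upsilon}; in the proof of this theorem the paper's $b$-fibration language reduces to this change of variables, via Proposition \ref{Prop:Pullback-Pushforward}. The binomial expansion of $\log(\sigma(1-r^2))^{\el}$, the $\partial_\eta^{\el-k}$ trick and the Beta evaluation via $t=r^2$ are also the same. The step you leave open, however, is where the stated coefficient formula comes from, so as written there is a genuine (if small) gap. Knowing only that the $\omega$-average is $\sum_p \sigma^p r^{2p} c_p(\theta,\sigma)$ with $c_p$ smooth, re-expansion gives a $\sigma^m$-coefficient $\sum_{p\le m} c_{p,m-p}(\theta)\, r^{2p}$ with unrelated $c_{p,j}$. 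Nothing then forces the product form $A_p(\theta)\binom{m-1}{p-1}$. What you need is the exact identity $c_p(\theta,\sigma)=A_p(\theta)(1-\sigma)^{-p}$.

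This is the content of the paper's Lemma \ref{IntegralFiberSphere}, and it closes along the lines you guess, but you should normalize by $s$ rather than by $|x|$. By homogeneity, $\tilde a(s\theta+\sqrt{\sigma}\,r\omega)=\tilde a(\theta+q\omega)$ with $q=\sqrt{\sigma}\,r/s$. The average is therefore $F(q;\theta)=\int_{S^{n-2}}\tilde a(\theta+q\omega)\,d\omega$, which is smooth and even in $q$, with Taylor coefficients $A_p(\theta)$ independent of $\sigma$. Equivalently, $D^{2p}\tilde a$ is homogeneous of degree $-2p$, so $D^{2p}\tilde a(s\theta)=s^{-2p}D^{2p}\tilde a(\theta)$, which confirms your guess that $A_p$ is the $\omega$-averaged $2p$-th derivative of $a$. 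Then $q^{2p}=\sigma^p r^{2p}(1-\sigma)^{-p}$, and $\sigma^p(1-\sigma)^{-p}=\sum_{m\ge p}\binom{m-1}{p-1}\sigma^m$ gives exactly \eqref{eq:Radon-coefficients}, including the $p=0$ convention. If you normalize by $|x|$ using $|x|^2=1-\sigma(1-r^2)$, as your last sentence suggests, you get factors like $(1-\sigma(1-r^2))^{-p}$ that mix powers of $r$ and do not directly yield this form. With that step supplied, the rest of your argument, including the remainder estimate, goes through as in the paper.
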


\hspace{20pt} The phg mapping properties of $R^*$ are more complicated to state due to the interaction of the interior and boundary values of $u$ on $R^*u$. The function $R^*u|_{\partial\Omega}$ can be non-vanishing even while $u|_{\partial Z} = 0$. This property can be understood at the level of the incidence relation, as the restriction of the $b$-fibration $\hat P:G\rightarrow\overline{Z}$ to the corners $S\overline{Z}\cap G$ as being a fibration over $\overline{Z}$ instead of only fibering the boundary $\partial Z$ ($G$ is defined in Section \ref{SectionDevelopment} and $b$-fibrations in Appendix \ref{bMaps}). This allows interior values of $u\in C^{\infty}(Z)$ to influence boundary values of $\hat P^*u$ on the boundary faces $S\overline{Z}\cap G$. 

\hspace{20pt} For a discrete subset of indices $I\subseteq\C\times\N_0$, we denote by $\overline{I}$ the smallest smooth index set containing every $(\gamma,k)\in I$. We say $I$ generates the index set $\overline{I}$. For $I$ made of a single index $(\gamma,k)$, the index set $\overline{\big\{(\gamma,k)\big\}}$ is
    \begin{align}
        \overline{\big\{(\gamma,k)\big\}} = \big(\gamma + \N_0\big)\times\{0,1,...,k\}.
    \end{align}

\begin{thm}[Phg mapping properties of $R^*$]\label{MainResult}
Let nonzero $a\in C^{\infty}(S^{n-1})$, $(\gamma,\el)\in\C\times\N_0$ and $\chi\in C^{\infty}_c[0,1)$ a cutoff $\chi\equiv1$ near $0$ and set $u(s,\theta) = a(\theta)\ \sigma^{\gamma}\log(\sigma)^{\el}\ \chi(1\pm s)$ to be the associated phg component. Define $F_{(\gamma,\el)}$ in \textbf{case a} (pole cancellation) when $n\geq2$ is even and $\gamma = m\in\N_0$ by 
	\begin{align}\label{eq:min-index-1}
		F_{(\gamma,l)} = \overline{\bigg\{\bigg(\frac{n-1}{2} + m,\el-1\bigg), (0,0)\bigg\}},
	\end{align}
where $F_{(\gamma,\el)} = \overline{\big\{(0,0)\big\}}$ whenever $\el=0$, in \textbf{case b} when $n\geq 2$ is even and $\gamma\in\frac{1}{2} + \Z$ or \textbf{case c} (pole creation) when $n\geq2$ is odd and $\gamma\in-\N$ by 
    \begin{align}\label{eq:min-index-2}
		F_{(\gamma,\el)} = \overline{\bigg\{\bigg(\frac{n-1}{2} + \gamma,\el\bigg), (0,0), (\eta,\ell+1)\bigg\}}, \quad \text{where} \quad \eta = \max\left(0, \frac{n-1}{2} + \gamma\right),
	\end{align}
and in \textbf{case d} (generically) for all other $n\geq2$ and $(\gamma,\el)$ by
    \begin{align}\label{eq:min-index-3}
        F_{(\gamma,\el)} = \overline{\bigg\{\bigg(\frac{n-1}{2} + \gamma,\el\bigg), (0,0)\bigg\}}.
    \end{align}
Then $F_{(\gamma,\el)}$ is an index set for $R^*u$. 
\end{thm}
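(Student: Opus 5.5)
We reduce $R^*u$ near $\partial\Omega$ to a one-variable Mellin problem in $\rho$ and read the index set off the poles of the resulting Mellin transform. The plan follows the Mellin-functional method of \cite{Mazzeo2021}.

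\textbf{Step 1 (reduction to a radial integral).} Fix $x=r\omega$ with $\omega\in S^{n-1}$, $r=|x|$, and write $\theta\in S^{n-1}$ as $\theta = t\omega + \sqrt{1-t^2}\,\phi$ with $t\in[-1,1]$ and $\phi\in S^{n-2}\cap\omega^\perp$. Then $x\cdot\theta = rt$ and $d\theta = (1-t^2)^{\frac{n-3}{2}}dt\,d\phi$, so
\begin{align}
R^*u(x) = \int_{-1}^1 \sigma(rt)^{\gamma}\log\sigma(rt)^{\el}\chi(1\pm rt)\,b(t,r\omega)\,(1-t^2)^{\frac{n-3}{2}}dt,
\end{align}
where $b(t,r\omega)=\int_{S^{n-2}} a(t\omega+\sqrt{1-t^2}\phi)\,d\phi$. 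The function $b$ is smooth in $r\omega$, and it is smooth in $t$ after accounting for the $\sqrt{1-t^2}$ dependence, which is even in $\phi$ and so averages out. Away from $t=\pm1$ the integrand is smooth in $r$ up to $r=1$, because $\sigma(rt)\geq 1-t^2>0$ there. Those contributions are $C^\infty(\overline\Omega)$, which accounts for the generator $(0,0)$.

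\textbf{Step 2 (localize near $t=1$).} The case $t=-1$ is symmetric. Set $v=1-rt$, so $\sigma = v(2-v)$, and note $1-t = (v-(1-r))/r$. Near $r=1$ we have $1-r\sim\rho/2$. The local contribution then becomes
\begin{align}
\int_{1-r}^{c} v^{\gamma}\log(v)^{\el}\,\big(v-(1-r)\big)^{\frac{n-3}{2}}\,c(v,r)\,dv
\end{align}
with $c$ smooth. Substituting $v=(1-r)w$ gives
\begin{align}
(1-r)^{\gamma+\frac{n-1}{2}}\int_1^{c/(1-r)} w^\gamma (w-1)^{\frac{n-3}{2}}\,\big(\log(1-r)+\log w\big)^{\el}\,c\big((1-r)w,r\big)\,dw.
\end{align}

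\textbf{Step 3 (Mellin transform and pole bookkeeping).} Take the Mellin transform in $\epsilon=1-r$. Taylor-expanding $c$ and $\chi$ in $v$, each term produces a product of two factors. The first is a Beta-type factor $\int_1^\infty w^{\gamma+j-z}(w-1)^{\frac{n-3}{2}}dw = B\big(z-\gamma-j-\tfrac{n-1}{2},\tfrac{n-1}{2}\big)$, with derivatives in $\gamma$ supplying the logarithms. This factor has poles at $z=\gamma+\frac{n-1}{2}+j+\N_0$, of order $\el+1$, giving the generator $(\frac{n-1}{2}+\gamma,\el)$. The second factor comes from the upper limit and the smooth part, and it has poles at $z\in\N_0$, giving $(0,0)$.

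The exceptional cases arise from how these two pole families interact.

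\emph{Case a:} $n$ is even and $\gamma=m\in\N_0$, so $\frac{n-1}{2}$ is a half-integer. Here $\Gamma(z-\gamma-j-\frac{n-1}{2})/\Gamma(z-\gamma-j)$ has zeros at $z-\gamma-j\in-\N_0$. These are integers, not the half-integer poles. Instead, the relevant cancellation comes from pairing the $t=\pm1$ endpoints: the kernel of the $\log(\sigma)^{\el}$ term for integer $\gamma$ is a polynomial times $\log$, and differentiating $\sigma^\eta$ in $\eta$ at integer $\eta$ has one fewer nonsmooth log. I would verify this by writing $\sigma^\gamma\log^\el\sigma=\partial_\eta^\el\sigma^\eta|_{\eta=m}$. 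Since $\eta\mapsto R^*(\sigma^\eta\cdots)$ at $\eta=m$ is smooth in $x$ (as observed from \eqref{eq:defn-backproj}), the coefficient of the top log vanishes, which lowers $\el$ to $\el-1$.

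\emph{Cases b and c:} the Beta poles at $\frac{n-1}{2}+\gamma+\N_0$ collide with the integer poles $\N_0$, producing one extra log, that is, $(\eta,\el+1)$ with $\eta=\max(0,\frac{n-1}{2}+\gamma)$.

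\emph{Case d:} there is no collision, which yields \eqref{eq:min-index-3}.

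Finally, convert from $1-r$ to $\rho$ using $1-r = \rho\cdot(\text{smooth positive})$. This preserves the index sets.

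\textbf{Main obstacle.} The delicate step is case a. I must show the top-order log coefficient cancels exactly, using either the $\eta$-derivative argument or explicit zeros of $1/\Gamma$ combined with the sum of the two endpoint contributions. I must also confirm that the pole-order count in cases b and c gives exactly one extra logarithm, including when several Taylor terms contribute to the same pole.
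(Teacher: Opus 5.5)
Your Steps 1--2 are a sound, more hands-on replacement for the paper's fiber integrals over $G$. The gap is in Step 3: the pole bookkeeping reverses the roles of the two factors, and the content of the theorem lies exactly there. Freeze $r=1$ in $c$; its $\epsilon$-dependence only contributes shifts $s\mapsto s+i$, $i\in\N_0$. The localized term $I(\epsilon)=\int_\epsilon^\infty v^{\gamma}\log(v)^{\el}(v-\epsilon)^{\frac{n-3}{2}}c(v)\,dv$ is then a Mellin convolution, and exchanging integrals gives
\begin{align}
\int_0^\infty\epsilon^{s-1}I(\epsilon)\,d\epsilon=B\Big(s,\frac{n-1}{2}\Big)\int_0^\infty v^{s+\gamma+\frac{n-1}{2}}\log(v)^{\el}c(v)\,\frac{dv}{v}.
\end{align}
Your Beta factor $B(z-\gamma-j-\frac{n-1}{2},\frac{n-1}{2})$ is exactly this $B(s,\frac{n-1}{2})$, written in the shifted variable $z=s+\gamma+j+\frac{n-1}{2}$. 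It does not depend on $\gamma$, and its poles are simple and produce the \emph{integer} indices. The order-$(\el+1)$ poles at $s=-(\frac{n-1}{2}+\gamma)-\N_0$ come from the second factor. This is the paper's $A_{1,q}A_{2,q}$ structure, with $B_0\propto\Gamma(z)/\Gamma(z+\frac{n-1}{2})$ as in Lemma \ref{BDensityExpansion}.

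With the roles as you assign them, two cases fail.

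\emph{$n$ odd.} Here $\Gamma(s)/\Gamma(s+\frac{n-1}{2})=\prod_{k=0}^{(n-3)/2}(s+k)^{-1}$ has only the poles $0,\dots,-\frac{n-3}{2}$, shifted to $-i,\dots,-i-\frac{n-3}{2}$. These meet the poles at $-(\gamma+\frac{n-1}{2}+i+j)$ in the same product iff $\gamma+j\in\{-\frac{n-1}{2},\dots,-1\}$, i.e.\ only when $\gamma\in-\N$. Your Step 3 instead posits integer poles at all of $\N_0$. Since $\frac{n-1}{2}+\gamma+\N_0\subset\N_0$ for $n$ odd and $\gamma\in\N_0$, your mechanism forces a collision and a spurious $\log^{\el+1}$ there. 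That recovers only the classical estimate of Proposition \ref{Prop:Back-ProjectionEstimate}, so your ``case d: no collision'' is unjustified precisely where the theorem improves on it.

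\emph{Case a.} The zeros of $1/\Gamma(s+\frac{n-1}{2})$ lie at $s\in-\frac{n-1}{2}-\N_0$, which are half-integers for $n$ even, not integers as you state. For $\gamma\in\N_0$ they land on every $\gamma$-pole and lower its order by one. This cancellation happens at a single endpoint, not by ``pairing the $t=\pm1$ endpoints''. Your fallback is to write $\sigma^m\log(\sigma)^{\el}=\partial_\eta^{\el}\sigma^\eta|_{\eta=m}$ and use $R^*C^\infty(\overline{Z})\subset C^\infty(\overline{\Omega})$ to kill the top logarithm. That is a valid alternative, but it presupposes an expansion of $R^*(a\sigma^\eta\chi)$ with coefficients holomorphic in $\eta$ near $m$ and remainders uniform in $\eta$. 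The corrected factorization is what provides this.

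Once you use that factorization, your route goes through and is more elementary than the paper's $b$-fibration argument.
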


\subsection{Discussion of Theorem \ref{MainResult}}

\hspace{20pt} The index sets in Theorem \ref{MainResult} improve upon the classical index set upper estimates. When $n\geq2$ is even and $\gamma\in\N_0$, Proposition \ref{Prop:Back-ProjectionEstimate} estimates $F' = \overline{\big\{(\frac{n-1}{2} + \gamma,\el),(0,0)\big\}}$ which compared to \eqref{eq:min-index-1}, does not detect the reduction in the second component of the generating index to $(\frac{n-1}{2} + \gamma,\el-1)$.

\hspace{20pt} When $n\geq2$ is odd and $\gamma\in\Z$, the classical estimate listed in Proposition \ref{Prop:Back-ProjectionEstimate} expects $F'$ to follow \ref{eq:min-index-2} and estimates the index set $F' = \overline{\{(\frac{n-1}{2} + \gamma,l),(0,0),(\eta,l+1)\}}$ where $\eta = \max\{0,\frac{n-1}{2}+\gamma\}$. Instead, the $\gamma\in\N_0$ case (corresponding to phg components of functions of class $C^{\infty}(\overline{Z})$) is covered by the \eqref{eq:min-index-3} case which is without the third generator. This discrepancy is explained in Subsection \ref{SubsectionProofBackproj} as a consequence of pole structure of the kernel of an operator defined on $R^*u$ for $u$ a phg component of $\overline{Z}$ (compare equations \eqref{eq:pairing-prefinal}, \eqref{B-integral-formula}). Corollaries are discussed in the following Subsection \ref{SubsectionCorollaries}. In all other settings for $n$ and $\gamma$, Theorem \ref{MainResult} agrees with the classical estimates in Proposition \ref{Prop:Back-ProjectionEstimate}. 

\begin{rmk}
In each case of Theorem \ref{MainResult}, $\overline{\{(0,0)\}}$ appears in the index set. This is due to the nonlocality of $R^*$, for $\vphi\in C^{\infty}_c(Z)$ and phg $u\in C^{\infty}(Z)$ the decomposition $u =(1-\vphi)\ u +  \vphi\ u$, has second component $\vphi u\in C^{\infty}_c(Z)$ and $R^*(\vphi u)\in C^{\infty}(\overline{\Omega})$ which are the phg functions with index set $\overline{\{(0,0)\}}$. The remaining component is $(1-\vphi)u \equiv u$ in a neighborhood of $\partial Z$ and accounts for the indices arising from the boundary data of $u$. 
\end{rmk}

On the other hand, we can see immediately from Theorem \ref{MainResult}, the following lower estimate on the index set of $R^*u$ for $u$ a phg component.

\begin{cor}\label{cor:lowerbound}
Let $u$ be as in Theorem \ref{MainResult} for some $(\gamma,\el)\in\C\times\N_0$ and $E$ an index set of $R^*u$. Then $\overline{\{(\frac{n-1}{2} + \gamma,p)\}}\subseteq E$ where
    \begin{align}
        p = \begin{cases}
            \el-1, & (\gamma,\el)\text{ is case }a,\\
            \el+1 & (\gamma,\el)\text{ is case }b\text{ or }c\text{ and }\frac{n-1}{2}+\gamma\geq0\\
            \el, & \text{else},
        \end{cases}
    \end{align}
and coefficient $b = b_{(\frac{n-1}{2}+\gamma,p)}(\theta)$ of the phg component of $R^*u$ associated to $(\frac{n-1}{2}+\gamma,p)$ when $n$ is even is
    \begin{align}
        b(\theta) = S_{\frac{s}{|s|}}a(\theta)\ \displaystyle \omega_{n-2}\ \Gamma\Big(\frac{n-1}{2}\Big)\begin{cases}
            \el\ \Gamma(-\frac{n-1}{2}-\gamma)\ (-1)^{\gamma+1}\ \gamma! & \gamma\in\N_0,\\
            \frac{\Gamma(-\frac{n-1}{2}-\gamma)}{\Gamma(-\gamma)} & \gamma\in\frac{1}{2}+\Z,\ \frac{n-1}{2}+\gamma < 0\\
            \frac{(-1)^{\frac{n-1}{2}+\gamma + 1}}{(\el + 1)\ (\frac{n-1}{2}+\gamma)!\ \Gamma(-\gamma)} & \gamma\in\frac{1}{2}+\Z,\ \frac{n-1}{2}+\gamma \geq 0,\\
            \frac{\Gamma(-\frac{n-1}{2}-\gamma)}{\Gamma(-\gamma)}, & \text{else},
        \end{cases}
    \end{align}
and when $n$ is odd is 
    \begin{align}
        b(\theta) = S_{\frac{s}{|s|}}a(\theta)\ \displaystyle \omega_{n-2}\ \Gamma\Big(\frac{n-1}{2}\Big)\begin{cases}
            \frac{-1}{(\el+1)}{\prod'}_{k=0}^{\frac{n-3}{2}}\frac{-1}{\frac{n-1}{2} + \gamma-k} & \gamma\in-\N,\ \frac{n-1}{2}+\gamma \geq 0,\\
            \prod_{k=0}^{\frac{n-3}{2}}\frac{-1}{\frac{n-1}{2} + \gamma-k}, & \text{else},
        \end{cases}
    \end{align}
where $S_{\frac{s}{|s|}}$ is the operator on $C^{\infty}(S^{n-1})$ defined by $S_{\frac{s}{|s|}}a(\theta) = a(\frac{s}{|s|}\theta)$, $\omega_{n-2}$ is the volume of the standard round, unit $S^{n-2}$, and $\prod'$ denotes the product of terms excluding the factor corresponding to $k=-(\frac{n-1}{2}+\gamma)$.
\end{cor}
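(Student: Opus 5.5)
The proof will come directly out of the Mellin-transform argument behind Theorem \ref{MainResult}. That argument does more than produce an upper bound $F_{(\gamma,\el)}$; it computes the leading singular term of $R^*u$ explicitly.

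\textbf{Step 1: pair against a test function and localise.} I would pair $R^*u$ with $\rho^{z}\phi$, with $\phi\in C^\infty(\partial\Omega)$ radially extended and cut off near $\rho=0$, and consider this Mellin functional in $z$. Using $\langle R^*u,\rho^z\phi\rangle=\langle u,R(\rho^z\phi)\rangle$, Theorem \ref{RadonExpansion} rewrites the pairing as an integral over $\overline Z$. Its coefficients carry the Beta factors $B(z+1,\tfrac{n-1}{2}+p)$ and the antipodal operator $S_{s/|s|}$. Integrating against $\sigma^\gamma\log(\sigma)^\el\chi$ in $s$ then gives the meromorphic function of $z$ already used in the paper; this is the pairing formula \eqref{eq:pairing-prefinal} and the Beta integral \eqref{B-integral-formula}.

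\textbf{Step 2: identify the pole and its order.} The index $(\tfrac{n-1}{2}+\gamma,p)$ appears in $E$ exactly when the functional has a pole at $z=-1-\tfrac{n-1}{2}-\gamma$ of order $p+1$ with nonzero residue coefficient. The residue coefficient is the coefficient $b$. The Mellin functional of $\rho^z\phi$ carries the factor $\Gamma(\tfrac{n-1}{2})\,\omega_{n-2}$ from the fibre integration, together with the ratio
\[
\Gamma\Big(-\tfrac{n-1}{2}-\gamma\Big)\big/\Gamma(-\gamma),
\]
which arises as $B$ evaluated at the shifted argument. When $n$ is odd, $\tfrac{n-1}{2}$ is an integer and this ratio collapses to the finite product $\prod_{k=0}^{(n-3)/2}\frac{-1}{\frac{n-1}{2}+\gamma-k}$.

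\textbf{Step 3: split into the cases of the statement.} In the generic case d the pole order is $\el+1$ and $b$ is the plain ratio.

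In case a ($n$ even, $\gamma\in\N_0$), $\Gamma(-\gamma)$ has a pole, which cancels one order of the pole coming from $\log^\el$. The surviving coefficient comes from differentiating $1/\Gamma(-\eta)$ at $\eta=\gamma$, which equals $(-1)^{\gamma+1}\gamma!$. The factor $\el$ arises from the $\partial_\eta^\el$ structure of the logarithmic power. This gives $p=\el-1$.

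In cases b and c with $\tfrac{n-1}{2}+\gamma\ge 0$, the pole of the numerator collides with the pole coming from the smooth part $(0,0)$, which raises the order by one, so $p=\el+1$. The coefficient picks up $1/(\el+1)$ from integrating $\log^\el$. In the odd case the colliding factor $k=-(\tfrac{n-1}{2}+\gamma)$ is removed from the product, giving $\prod'$. In the even half-integer case the Gamma factor with the pole is replaced by its residue $\frac{(-1)^{j}}{j!}$, where $j=\tfrac{n-1}{2}+\gamma$.

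\textbf{Step 4: conclude the containment.} Since $a\neq0$ and $S_{s/|s|}$ is invertible, $b\not\equiv0$. Any index set for a phg function must contain each index whose coefficient is nonzero, and index sets are closed under $(\gamma,k)\mapsto(\gamma+1,k)$ and $(\gamma,k)\mapsto(\gamma,k-1)$. Hence $E$ contains $\overline{\{(\tfrac{n-1}{2}+\gamma,p)\}}$.

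\textbf{Main obstacle.} The hardest part is the bookkeeping of pole orders and the exact constants in the degenerate cases a, b and c. There, cancellation or collision of poles means the coefficient is a Laurent coefficient rather than a simple residue. I would handle this by expanding $\Gamma$ and $1/\Gamma$ to first order around the integer points before extracting the coefficient of the relevant power of $(z-z_0)^{-1}$.
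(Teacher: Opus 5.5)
Your proposal is correct and follows the paper's proof essentially step for step. Your pairing $\langle u,R(\rho^z\phi)\rangle$ is the paper's \eqref{eq:pairing-prefinal} with $\rho^z$ in place of $\rho^{z-1}$, which is why your pole sits at $-1-\frac{n-1}{2}-\gamma$. The leading pole comes from $A_{1,0}A_{2,0}$ with $B_0\propto\Gamma(z)/\Gamma(z+\frac{n-1}{2})$, and cases a and b/c are exactly the zero--pole cancellation and pole--pole collision treated there; your $\partial_\eta$ reading of case a gives the same constant $\el(-1)^{\gamma+1}\gamma!$ as the paper's Laurent extraction in $z$.

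When you do the bookkeeping, two small points. In cases b and c the normalization ratio is $(-1)^{\el}\el!/\big((-1)^{\el+1}(\el+1)!\big)=-1/(\el+1)$, not $1/(\el+1)$, and this is where the extra minus sign in the stated coefficients comes from. In case a with $\el=0$ the coefficient vanishes, so your Step 4 claim $b\not\equiv0$ fails there, but the containment is vacuous because $p=-1$.
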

\begin{proof}
The proof of the coefficient formulas are found in Subsection \ref{SubsectionProofBackproj}. Each coefficient is non-zero, with the exception of the $n$ even, $\gamma\in\N_0$, and $\el=0$ case, such that the index $(\frac{n-1}{2}+\gamma,p)$ is supported and so must the index set it generates. In the exception case, the index set is $\overline{\{(\frac{n-1}{2}+\gamma,-1)\}} = \emptyset$. 
\end{proof}

\subsection{Corollaries for normal operators constructed from $R$ and $R^*$}\label{SubsectionCorollaries}

From Theorem \ref{RadonExpansion} or Proposition \ref{Prop:ForwardEstimate}, when $n$ is even, $R\ C^{\infty}(\overline{\Omega})\subset \sigma^{1/2}\ C^{\infty}_{\text{even}}(\overline{Z})$. The appearance of a non-smooth $\sigma^{1/2}$ factor will require that under $R^*$, a function in $R\ C^{\infty}(\overline{\Omega})$ has index set indicated by case c (pole creation) of Theorem \ref{MainResult}. The $\sigma^{1/2}$-singularity will propagate as a $\log(\rho)$-singularity in $R^*R\ C^{\infty}(\overline{\Omega})$. 

\begin{cor}\label{cor:Normal}
Let $n$ be even. For each $\el\in\N$, 
	\begin{align}\label{eq:normalmap}
		(R^*R)^{\el}:C^{\infty}(\overline{\Omega})\rightarrow\sum_{0\leq j\leq k\leq\el}\rho^{(n-1)k}\log(\rho)^j\ C^{\infty}(\overline{\Omega})
	\end{align}
and there exists functions in the range of $(R^*R)^{\el}$ having non-vanishing components with $\log(\rho)^{\el}$ in their phg expansion. 
\end{cor}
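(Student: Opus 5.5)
The plan is to induct on $\el$, alternating Theorem \ref{RadonExpansion} and Theorem \ref{MainResult} and tracking exponents and log powers. For the first inclusion, take $f\in C^{\infty}(\overline{\Omega})$. Near $\partial\Omega$, Taylor-expand $f$ in $\rho$ with angular coefficients: $f\sim\sum_{m}\tilde a_m(x)\rho^m$, where $\tilde a_m$ is the radial extension of some $a_m\in C^{\infty}(\partial\Omega)$. Truncate at a high order $N$. The remainder vanishes to order $N$ at $\partial\Omega$, so by Proposition \ref{Prop:ForwardEstimate} its image under $R$ vanishes to high order at $\partial Z$; a finitely regular remainder of arbitrarily high order is harmless after Borel summation. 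The interior part $(1-\chi(\rho))f$ is compactly supported in $\Omega$, so its Radon transform lies in $C^\infty(\overline Z)$ (index set $\overline{\{(0,0)\}}$).

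For each component $\tilde a_m\rho^m\chi(\rho)$, Theorem \ref{RadonExpansion} (with $\gamma=m$, $\el=0$) gives an expansion in $\sigma^{\frac{n-1}{2}+m+m'}$. Since $n$ is even, these exponents are half-integers. For each such component, Theorem \ref{MainResult} in case b (with $\gamma=\frac{n-1}{2}+m''\in\frac12+\Z$, $\el=0$) gives the index set generated by $(n-1+m'',0)$, $(0,0)$ and $(n-1+m'',1)$. Hence $R^*Rf\in C^\infty(\overline\Omega)+\rho^{n-1}\log(\rho)C^\infty(\overline\Omega)$, which is \eqref{eq:normalmap} for $\el=1$.

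For the inductive step, assume $(R^*R)^{\el}f$ lies in the right side of \eqref{eq:normalmap}. Expand each summand $\rho^{(n-1)k}\log(\rho)^jg$ into phg components $\tilde a\rho^{(n-1)k+m}\log(\rho)^j\chi$, handling the remainder as above. Theorem \ref{RadonExpansion} maps such a component to terms $\sigma^{\frac{n-1}{2}+(n-1)k+m'}\log(\sigma)^{i}$ with $i\le j$. Case b of Theorem \ref{MainResult} then returns terms $\rho^{(n-1)(k+1)+m''}\log(\rho)^{i'}$ with $i'\le i+1\le j+1$, plus smooth terms. Because $0\le j\le k\le \el$, the new pair satisfies $j+1\le k+1\le\el+1$, so after reindexing the result lies in $\sum_{0\le j\le k\le \el+1}\rho^{(n-1)k}\log(\rho)^j C^\infty(\overline\Omega)$. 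One small point to check is that $\log\sigma$ versus $\log(1\pm s)$ differ by smooth factors near each boundary component, so the components fed into Theorem \ref{MainResult} are of the required form.

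For the sharpness claim, take $f\equiv1$ (or $f=\chi(\rho)$, so $a=1$), and track only the coefficient of $\rho^{(n-1)\el}\log(\rho)^{\el}$. By the bookkeeping above, a power $\log(\rho)^{j}$ can only appear together with exponents $\ge(n-1)j$. So the term $\rho^{(n-1)\el}\log(\rho)^{\el}$ arises from exactly one chain: at each stage, the leading term $\rho^{(n-1)k}\log(\rho)^k$, passed through $R$ and then $R^*$. Along this chain, $R$ sends $\rho^{\gamma}\log(\rho)^{k}$ to leading coefficient $A_{0,k}=A_0\,B(\gamma+1,\frac{n-1}{2})$ by \eqref{eq:Radon-coefficients}, with no $\eta$-derivative. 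This is nonzero since $\Real\gamma\ge0$ and $A_0\ne0$ for $a\equiv1$. $R^*$ then acts on $\sigma^{\frac{n-1}{2}+\gamma}\log(\sigma)^k$ with $\frac{n-1}{2}+\gamma\ge0$ a half-integer. Corollary \ref{cor:lowerbound} (case b, third line of the even-$n$ formula) gives a nonzero coefficient $b$ for $\rho^{n-1+\gamma}\log(\rho)^{k+1}$, since $\Gamma(-\gamma)$ is finite and nonzero for half-integer $\gamma$.

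The main obstacle is justifying that no other terms cancel this top coefficient. I would argue this by an ordering on pairs (exponent, log power), showing that every other contribution carries either a strictly larger exponent or a strictly smaller log power. Throughout I use that the coefficients remain constant in $\theta$, because $f$ is radial and the radial structure is preserved. Multiplying the nonzero factors along the chain then gives a nonvanishing $\rho^{(n-1)\el}\log(\rho)^{\el}$ component of $(R^*R)^{\el}f$.
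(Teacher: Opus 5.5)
Your proposal is correct and follows essentially the same route as the paper. Both argue by induction on $\el$, pushing each phg component through Theorem \ref{RadonExpansion} and then case b of Theorem \ref{MainResult}, which raises the exponent by $n-1$ and the log power by at most one, and both obtain the sharpness claim from the nonvanishing coefficients recorded in Corollary \ref{cor:lowerbound}. Your version is more careful than the paper's terse argument: you track the log power through $R$ correctly, handle the truncation remainders, and use the exponent/log-power ordering to show that only the leading chain contributes to the $\rho^{(n-1)\el}\log(\rho)^{\el}$ coefficient of $(R^*R)^{\el}1$.
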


\begin{rmk}
This generalizes \cite[Corollary 2.6]{Mazzeo2021} to the Radon transform setting.
\end{rmk}

\begin{proof}
By linearity of $R^*R$, induction, and the identity
    \begin{align}\label{eq:phg-identity}
        \A^{\overline{\{(\gamma,\el)\}}}_{\text{phg}}(\overline{\Omega}) = \sum_{j = 0}^{\el}\log(\rho)^j\rho^{\gamma}\ C^{\infty}(\overline{\Omega}),
    \end{align}
we only need to check that components with $(\gamma,j)\in(n-1)\N_0\times\N_0$ map to a component with $(\gamma + n-1,j+1)$. Let $j, k\in\N_0$ and $j\leq k$ and let $u\in\A^{\overline{\{((n-1)k,j)\}}}_{\text{phg}}(\overline{\Omega})$. By Theorem \ref{RadonExpansion} or Proposition \ref{Prop:ForwardEstimate}, $Ru$ will have index set $(\frac{n-1}{2},0) + \overline{\{((n-1)k,j)\}} = \overline{\{(\frac{2k+1}{2}(n-1),0)\}}$. Since $n$ is even, $Ru\in\sigma^{1/2}\ C^{\infty}_{\text{even}}(\overline{Z})$ and each index of $Ru$ falls under case b of Theorem \ref{MainResult}. It follows that $R^*Ru$ has index set $\overline{\{((k+1)(n-1),j + 1),(0,0)\}}$ corresponding to 
    \begin{align}
        R^*Ru\in C^{\infty}(\overline{\Omega}) + \A_{\text{phg}}^{\overline{\{((k+1)(n-1),j + 1)\}}}(\overline{\Omega}),
    \end{align}
from which \eqref{eq:normalmap} follows. 

\hspace{20pt} The range of $(R^*R)^{\el}$ will contain functions with non-vanishing component at index $((n-1)k,j)$ for all $0\leq j\leq k\leq\el$ due to the determination of the non-vanishing coefficient of such components in Theorem \ref{MainResult}.
\end{proof}

\begin{rmk}
When $n\geq2$ is odd, $R^*R$ has the mapping property $R^*R:C^{\infty}(\overline{\Omega})\rightarrow C^{\infty}(\overline{\Omega})$.
\end{rmk}

We can introduce a singular weight between $R$ and $R^*$ to correct for the non-smoothness of $Ru$ in even dimension $n$. The Singular Value Decomposition of these operators were previously studied in \cite{Louis1984,Mishra2023}.

\begin{cor}\label{cor:AdjustedNormal}
For any $n\geq2$ and $\gamma\in\C$ with $\Real(\gamma)\geq0$, $R^*\sigma^{-\frac{n-1}{2} + \gamma}R\rho^{\gamma}$ has the mapping property 
    \begin{align}\label{eq:adjusted-normal}
        R^*\sigma^{-\frac{n-1}{2} - \gamma}R\rho^{\gamma}: C^{\infty}(\overline{\Omega})\rightarrow C^{\infty}(\overline{\Omega})
    \end{align}
where $\sigma^{-\frac{n-1}{2} -\gamma}$ and $\rho^{\gamma}$ are multiplication operators with symbol given by their notation.  
\end{cor}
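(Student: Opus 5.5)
The plan is to reduce to Theorem~\ref{RadonExpansion} and Theorem~\ref{MainResult}, applied to the Taylor expansion of $f\in C^{\infty}(\overline{\Omega})$ at $\partial\Omega$. The weight in \eqref{eq:adjusted-normal} should be read as $\sigma^{-\frac{n-1}{2}-\gamma}$. It is designed to cancel exactly the leading exponent $\frac{n-1}{2}+\gamma$ that $R$ creates from $\rho^{\gamma}$.

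\textbf{Step 1: decomposition of $f$.} Fix $N\in\N$ and a cutoff $\chi\in C^{\infty}_c[0,1)$ with $\chi\equiv1$ near $0$. Expanding $f$ in the bdf $\rho$ with angular coefficients gives
\begin{align}
f=\sum_{m=0}^{N-1}\tilde a_m(x)\,\rho^{m}\chi(\rho)+\rho^{N}g_N+h,
\end{align}
where $a_m\in C^{\infty}(\partial\Omega)$, $g_N\in C^{\infty}(\overline{\Omega})$, and $h\in C^{\infty}_c(\Omega)$. Then $\rho^{\gamma}f$ is a finite sum of the following pieces:
\begin{itemize}
\item phg components $\tilde a_m\rho^{\gamma+m}\chi(\rho)$ with $\ell=0$ and $\Real(\gamma+m)\geq0>-1$;
\item a remainder $\rho^{\gamma+N}g_N$;
\item the compactly supported term $\rho^{\gamma}h\in C^{\infty}_c(\Omega)$.
\end{itemize}
For the last term, $R(\rho^\gamma h)\in C^{\infty}_c(Z)$, so the weight is harmless and $R^*$ of it lies in $C^{\infty}(\overline{\Omega})$.

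\textbf{Step 2: the phg components.} Apply Theorem~\ref{RadonExpansion} with exponent $\gamma+m$ and $\ell=0$. Then $R(\tilde a_m\rho^{\gamma+m}\chi)\sim\sum_{j\ge0}\sigma^{\frac{n-1}{2}+\gamma+m+j}A_{j}(\theta)$, with smooth coefficients on each of the two boundary components $s=\pm1$; the factor $\frac{s}{|s|}$ is locally constant there. Multiplying by $\sigma^{-\frac{n-1}{2}-\gamma}$ gives an expansion $\sum_j\sigma^{m+j}A_j(\theta)$ near $\partial Z$. Since the function is smooth in the interior, it lies in $C^{\infty}(\overline{Z})$. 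Decomposing this function in turn into phg components $a(\theta)\sigma^{k}\chi(1\pm s)$ with $k\in\N_0$ and $\ell=0$, Theorem~\ref{MainResult} gives the index set $\overline{\{(0,0)\}}$ for the $R^*$-image in every dimension:
\begin{itemize}
\item for $n$ even this is case a with $\ell=0$;
\item for $n$ odd and $k\in\N_0$ this is case d, with the generator $(\frac{n-1}{2}+k,0)\in\overline{\{(0,0)\}}$.
\end{itemize}
Alternatively, one can note directly from \eqref{eq:defn-backproj} that $R^*C^{\infty}(\overline{Z})\subset C^{\infty}(\overline{\Omega})$. Either way, each of these pieces maps into $C^{\infty}(\overline{\Omega})$.

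\textbf{Step 3: the remainder, which I expect to be the main obstacle.} Here one must show that $R^*\sigma^{-\frac{n-1}{2}-\gamma}R(\rho^{\gamma+N}g_N)\in C^{K(N)}(\overline{\Omega})$ with $K(N)\to\infty$. I would argue via the pushforward/pullback formulas of Section~\ref{SectionDevelopment} in their finite-regularity (conormal) form.
\begin{itemize}
\item $\rho^{\gamma+N}g_N$ is conormal and vanishes to order $\Real\gamma+N$.
\item Pulled back to $G$ and pushed forward by $\hat\pi$, it gives a function vanishing to order $\frac{n-1}{2}+\Real\gamma+N$ at $\partial Z$, with $b$-derivatives controlled up to order $\sim N$. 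Concretely, I would differentiate under the integral in the explicit hyperplane parametrization, using $|x|^2=s^2+|y|^2$ on $\Pi(s,\theta)$.
\item After the weight, the function vanishes to order $N$ with $\sim N$ derivatives, hence lies in $C^{N'}(\overline{Z})$.
\item $R^*$ preserves $C^{N'}$ regularity, by differentiating \eqref{eq:defn-backproj} in $x$.
\end{itemize}
Since $f$ is fixed and $N$ is arbitrary, the sum of Steps 1--3 is independent of $N$. It therefore lies in $C^{K(N)}(\overline{\Omega})$ for every $N$, hence in $C^{\infty}(\overline{\Omega})$.

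The delicate points are the uniformity of the derivative bounds on the remainder near the boundary, and checking that the $\sigma^{-\frac{n-1}{2}-\gamma}$ singularity is fully absorbed by the vanishing produced by $R$. The hypothesis $\Real\gamma\geq0$ is used in Step 2, to satisfy the condition $\Real>-1$ of Theorem~\ref{RadonExpansion}, and in Step 3, to keep the remainder's vanishing order nonnegative.
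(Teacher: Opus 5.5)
Your proof is correct. It has the same skeleton as the paper's: the weight cancels the leading exponent $\frac{n-1}{2}+\gamma$ created by $R$, which lands you in $C^{\infty}(\overline{Z})$, and $R^*$ preserves smoothness (cases a/d of Theorem \ref{MainResult}, or directly from \eqref{eq:defn-backproj}).

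The difference is how you handle a general $f$ in the forward step. The paper gets $R\rho^{\gamma}C^{\infty}(\overline{\Omega})\subset\sigma^{\frac{n-1}{2}+\gamma}C^{\infty}(\overline{Z})$ in one line from Proposition \ref{Prop:ForwardEstimate}, which is the Pushforward Theorem applied to the whole class with index set $\overline{\{(\gamma,0)\}}$, so it never decomposes $f$. You use only Theorem \ref{RadonExpansion}, which treats one radial phg component at a time, so you need the Taylor decomposition, a remainder estimate and the limit $N\to\infty$.

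Your Step 3 works; note that the pushforward there is by $\hat P$, not $\hat\pi$. Parametrizing $\Pi(s,\theta)$ by $x=s\theta+\sqrt{\sigma}\,v$ gives $\rho=\sigma(1-|v|^2)$, hence $\sigma^{-\frac{n-1}{2}-\gamma}R(\rho^{\gamma+N}g_N)=\sigma^{N}\int_{v\perp\theta,\,|v|\le1}(1-|v|^2)^{\gamma+N}g_N(s\theta+\sqrt{\sigma}\,v)\,dv$. Each derivative of the integral costs at most one power of $\sigma$. So this function lies in $C^{N-1}(\overline{Z})$ with derivatives vanishing on $\partial Z$, and $R^*$ preserves that regularity.

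The same formula with $N=0$ would let you skip Steps 1 and 3 altogether. After extending $g$ smoothly past $\partial\Omega$, the function $F(s,\theta,t)=\int(1-|v|^2)^{\gamma}g(s\theta+tv)\,dv$ is smooth and even in $t$ (substitute $v\mapsto-v$). Hence it is a smooth function of $(s,\theta,t^2)$, and $R(\rho^{\gamma}g)=\sigma^{\frac{n-1}{2}+\gamma}F(s,\theta,\sqrt{\sigma})\in\sigma^{\frac{n-1}{2}+\gamma}C^{\infty}(\overline{Z})$ for every $g\in C^{\infty}(\overline{\Omega})$.

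The paper's route is shorter. The hands-on route is more transparent: $\hat\pi$ contains $\sqrt{1-s^2}$ and is not smooth at $s=\pm1$, so the absence of half-integer powers of $\sigma$ actually comes from the $v\mapsto-v$ symmetry of the fibres of $\hat P$.
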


\begin{rmk}
This generalizes \cite[Lemma 2.9]{Mazzeo2021} to the Radon transform setting and Corollary \ref{cor:AdjustedNormal} also holds when $n$ is even and $\sigma^{-\frac{n-1}{2}}$ is replaced by $\sigma^{-\frac{1}{2}}$ like in that Lemma.
\end{rmk}

\begin{proof}
The functions $\rho^{\gamma}\in C^{\infty}(\Omega)$ and $\sigma^{-\frac{n-1}{2} - \gamma}\in C^{\infty}(Z)$ are non-vanishing and conormal and define shifts on each conormal space of $\Omega$ and $Z$ respectively. From either Theorem \ref{RadonExpansion} or Proposition \ref{Prop:ForwardEstimate}, $R\ \rho^{\gamma} C^{\infty}(\overline{\Omega})\subset\sigma^{\frac{n-1}{2} + \gamma}\ C^{\infty}_{\text{even}}(\overline{Z})$. It follows that $\sigma^{-\frac{n-1}{2}-\gamma}R\ C^{\infty}(\overline{\Omega})\subset C^{\infty}(\overline{Z})$. This containment will mean the non-pole creation case of Theorem \ref{MainResult} (either case a or case d depending on the parity of $n$) will apply to the mapping of a function of $\sigma^{-\frac{n-1}{2}-\gamma}R\ C^{\infty}(\overline{\Omega})$ under $R^*$. In either case, \eqref{eq:adjusted-normal} holds. 
\end{proof}

\section{Development of the main results}\label{SectionDevelopment}

\subsection{Geometry of the desingularization $G$}
The point-hyperplane relation associated to $R$,
    \begin{align}
        \I = \big\{(s,\theta,x)\in\R\times S^{n-1}\times\R^n : x\in\Pi(s,\theta)\big\},
    \end{align}
is a double fibration of $\I$ with fibrations being the canonical coordinate projection maps (see \cite{Gelfand1969,Helgason1980}). The fibration $P:\I\rightarrow\R\times S^{n-1}$ is a rank $(n-1)$ vector bundle\footnote{This could be viewed as the tautological bundle on the space of affine hyperplanes in $\R^n$.} and $\pi:\I\rightarrow\R^n$ is an $(n-1)$-sphere bundle. Consider the restriction of the coordinate projections to 
    \begin{align}\label{eq:incidence-rel}
        E = (Z\times\Omega)\cap\I, && \overline{E} = (\overline{Z}\times \overline{\Omega})\cap\I.
    \end{align}
We denote restrictions of each projection with the same symbol. The restriction to $E$ also forms a double fibration, except $P:E\rightarrow Z$ is an open $n$-ball bundle.\footnote{This is also sometimes called an $n$-disc bundle.} The restriction of $P$ to $\overline{E}$ fails to be a fibration. The fiber of $P$ over $(s,\theta)\in \overline{Z}$ is a radius $\sqrt{1-s^2}$ closed $n$-ball that is singular at $s^2 = 1$. These `small fibers' above the boundary $\partial Z$ are the analogue of `short geodesics' of \cite{Mazzeo2021} on manifolds with strictly-convex boundary.
    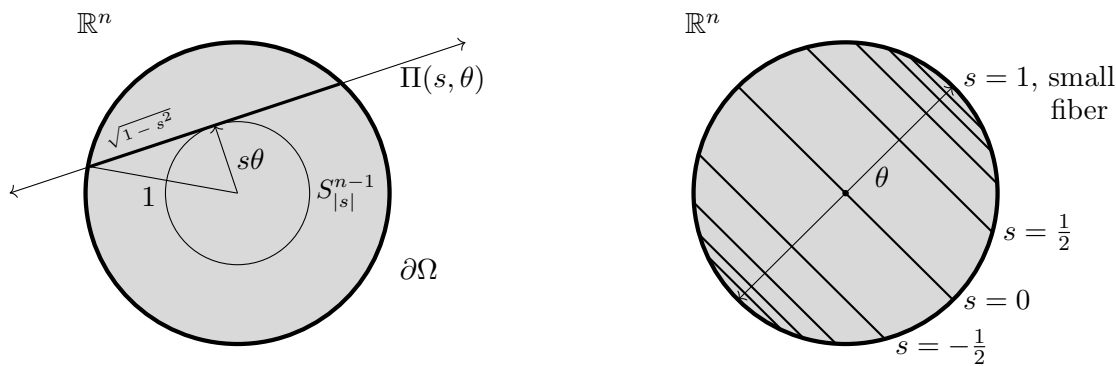
\begin{figure}[h]
        \centering
        \begin{tikzpicture}
            \draw[ultra thick,fill=gray,fill opacity=0.3] (-8,0) circle (2);
		      \draw [<->] (-11,0) -- (-5,2) node[pos=.3,above,rotate=20] {\tiny$\sqrt{1-s^2}$};
            \draw node at (-5.3,1.5) {$\Pi(s,\theta)$};
            \draw[-] (-9.94317,0.352277) -- (-8,0) node[pos=0.4,below] {$1$};
            \draw [-,very thick] (-9.94317,0.352277) -- (-6.65683,1.44772);
            \draw[->] (-8,0) --  (-8.3,0.9) node[pos=0.5,right] {$s\theta$};
            \draw (-8,0) circle (0.9486);
            \draw node[right] at (-7.1,0) {\small$S^{n-1}_{|s|}$};
            \draw node[right] at (-6,-1) {$\partial\Omega$};
            \draw node[left] at (-9.5,2.25) {$\R^n$};
            \draw[ultra thick,fill=gray,fill opacity=0.3] (0,0) circle (2);
            \draw node[left] at (-1.5,2.25) {$\R^n$};
            \draw[fill=black] (0,0) circle (1pt);
            \draw[<->] (-1.414,-1.414) -- (1.414,1.414) node[pos=0.67,below] {$\theta$};
            \draw[-,thick] (-1.414,1.414) -- (1.414,-1.414);
            \foreach \y in {.707,-0.707,1.06066017178,-1.06066017178,1.23743686708,-1.23743686708,1.32582521472,-1.32582521472}
                \draw[thick] ($(\y,\y)+sqrt(2-\y*\y)*(1,-1)$) -- ($(\y,\y)+sqrt(2-\y*\y)*(-1,1)$);
            \draw node[right] at (1.414,-1.414) {$s=0$};
            \draw node[right] at (1.93185165258,-0.5176380902) {$s=\frac{1}{2}$};
            \draw node[right] at (0.5176380902,-2.03185165258) {$s=-\frac{1}{2}$};
            \draw node[right] at (1.414,1.514) {$s=1$, small};
            \draw node[right] at (2.564,1.114) {fiber};
            \end{tikzpicture}
        \caption{Schematic of $\overline{\Omega}$. Decomposition of $\overline{\Omega}$ along $\theta$-axis, like a topological suspension.}
    \end{figure}

\hspace{20pt} For definition of manifold with boundary (mwb), manifold with corners (mwc), boundary hypersurface (bhs), or (local) boundary-defining function (bdf), refer to Appendix \ref{ManifoldWithCorners}. The space $\overline{E}$ is a manifold with corners which is at most $2$-codimensional. The fiber above $(s,\theta)\in Z$ is an open $(n-1)$-ball tangent to the radius-$|s|$ sphere $S^{n-1}_{|s|}\subset\R^n$ at $s\theta$. This suggests the following identifications. Extend the embedding $\overline{Z}\hookrightarrow\R^1\times\R^n$ to an embedding of the tangent manifold $T\overline{Z}\hookrightarrow\R^1\times\R^n\times\R^1\times\R^n$ in the canonical way\footnote{The canonical extension of an embedding $\vphi:M\hookrightarrow\R^N$ is $\tilde\vphi:TM\ni v\mapsto(\pi(v),d\vphi_{\pi(v)}v)$.} and identify them. We have a smooth function $\I\mapsto T\overline{Z}$ defined by $(s,\theta,x)\mapsto(s,\theta,0,x-s\theta)$ which restricts to a bundle map $\vphi:\I\rightarrow\Ker(ds)\subset T\overline{Z}$. We only work on $\Ker(ds)$, for the remainder of the discussion, and will forget the third coordinate. Under this bundle map, the coordinate projection $P:\I\rightarrow\R\times S^{n-1}$ is identified with the canonical projection of $\Ker(ds)$. We equip $T\overline{Z}$ with the Riemannian metric $|\cdot|$ by inducing one from the standard inner product on each tangent space $\R^1\times\R^N$. The diffeomorphism $\vphi$ identifies the fiber of $E$ above $(s,\theta)$ with the open, radius-$\sqrt{1-s^2}$ tangent ball in the tangent space to $\{s\}\times S^{n-1}\subset \overline{Z}$. Define
    \begin{align}
        G = \Big\{\big((s,\theta),v\big)\in\Ker(ds) : |v|\leq1\Big\} = \Ker(ds)\cap D\overline{Z},
    \end{align}
as the closed unit ball bundle inside the tangent distribution $\Ker(ds)$, where $D\overline{Z}$ is the closed, tangent, unit ball bundle associated to the metric $|\cdot|$ on $T\overline{Z}$. For the remainder of the discussion we write $(s,\theta,v)$ for $\big((s,\theta),v\big)\in T\overline{Z}$. Define singular coordinates $\Upsilon:G\rightarrow \overline{E}$ as the rescaling of each tangent ball to the unit ball in the same fiber
    \begin{align}\label{defn:Upsilon}
        \Upsilon: (s,\theta,v)\mapsto \big(s,\theta,s\theta + \sqrt{1-s^2}\ v\big),
    \end{align}
within $\I$. Define the projections $\hat\pi$ and $\hat P$ via the commuting diagrams
	\begin{figure}[h]\centering
		\begin{tikzcd}
				& \arrow[dl, "\hat\pi" above] G \arrow[dr, "\hat P"] \arrow[d,"\Upsilon"]&\\
				\overline{\Omega} & \arrow[l,"\pi"] \overline{E} \arrow[r,"P" below] &\overline{Z}.
		\end{tikzcd}
	\end{figure}
    
Recall $\sigma(s,\theta) = 1-s^2$ is a local bdf on $\overline{Z}$.

\begin{prop}\label{lem:interior}
The restriction $\Upsilon|_{G^{\text{int}}}$ has $\Upsilon(G^{\text{int}}) = E$ and is a bundle map over $Z$. The induced volume element on $G$ by pullback is 
    \begin{align}\label{eq:pullbackvolume}
        \Upsilon^*(d\mu_{(s,\theta)}d\theta ds) = \sigma^{\frac{n-1}{2}}\ dv_{(s,\theta)}d\theta ds,
    \end{align}
where $dv_{(s,\theta)}$ is the volume element on the fiber $G|_{(s,\theta)}$. 
\end{prop}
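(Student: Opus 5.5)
We argue directly from the explicit formula \eqref{defn:Upsilon}, working fiberwise over $\overline{Z}$.

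\emph{Bundle map and image.} The interior of $G$ consists of the points $(s,\theta,v)$ with $|s|<1$ and $|v|<1$, where $v\in\theta^{\perp}$ is identified with the fiber of $\Ker(ds)$ at $(s,\theta)$. Fix $(s,\theta)\in Z$. Then $\sigma=1-s^2>0$, so $v\mapsto s\theta+\sqrt{1-s^2}\,v$ is an affine isomorphism $\theta^{\perp}\to\Pi(s,\theta)$. It is a translation composed with the dilation by $\sqrt{\sigma}$, so it sends the open unit ball of $\theta^{\perp}$ onto the set of $x\in\Pi(s,\theta)$ with $|x-s\theta|<\sqrt{1-s^2}$. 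Since $s\theta\perp(x-s\theta)$, we have $|x|^2=s^2+|x-s\theta|^2$, and this set is exactly $\Pi(s,\theta)\cap\Omega$, which is the fiber of $E$ over $(s,\theta)$. The map $\Upsilon$ preserves the $(s,\theta)$ coordinates, so it commutes with the projections to $Z$. Its inverse is
\[
(s,\theta,x)\mapsto\Big(s,\theta,\tfrac{x-s\theta}{\sqrt{1-s^2}}\Big),
\]
which is smooth on $E$ because $\sqrt{1-s^2}$ is smooth and nonvanishing on $Z$. Hence $\Upsilon|_{G^{\text{int}}}$ is a diffeomorphism onto $E$ and a bundle map over $Z$.

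\emph{Volume element.} The measure $d\mu_{(s,\theta)}\,d\theta\,ds$ on $E$ is a product: base measure $d\theta\,ds$ times the fiber Lebesgue measure on $\Pi(s,\theta)$. Because $\Upsilon$ is the identity on the base coordinates, the pullback factors the same way, and we only need the fiberwise Jacobian. On each fiber the map is affine with linear part $\sqrt{\sigma}\,\mathrm{Id}_{\theta^{\perp}}$, and $\theta^{\perp}$ has dimension $n-1$, so the fiberwise Jacobian is $\sigma^{\frac{n-1}{2}}$. This gives $\Upsilon^*d\mu_{(s,\theta)}=\sigma^{\frac{n-1}{2}}\,dv_{(s,\theta)}$ fiberwise.

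The main obstacle is that the fiber $\theta^{\perp}$ of $\Ker(ds)$ varies with $\theta$, so the claim that the measure factors as fiber times base needs justification. We handle it with a local trivialization of $\Ker(ds)$: an orthonormal frame $e_1(\theta),\dots,e_{n-1}(\theta)$ of $\theta^{\perp}$ and coordinates $v=\sum_j y_je_j(\theta)$. In these coordinates $\Upsilon$ has block-triangular Jacobian, with the base block equal to the identity. Off-diagonal terms such as $\partial_s x=\theta-\frac{s}{\sqrt{\sigma}}v$ lie in the triangular part and do not affect the determinant. The determinant is therefore the product of the base block and the fiber block, and equals $\sigma^{\frac{n-1}{2}}$. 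This is the standard fact that the Fubini disintegration of a smooth measure along a fibration is preserved by fiber-preserving maps covering the identity. The identity \eqref{eq:pullbackvolume} then holds on $G^{\text{int}}$ and extends to $G$ by continuity as an identity of smooth densities.
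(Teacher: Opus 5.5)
Your proposal is correct and follows the same route as the paper. The paper shows that $\Upsilon$ is a fiberwise translation-plus-dilation by $\sqrt{\sigma}$ covering the identity on $Z$, that its inverse is the inverse rescaling, and that the density factor is the Jacobian $\sigma^{\frac{n-1}{2}}$ of a uniform dilation of the $(n-1)$-dimensional fiber. Your Pythagoras check of the image and your local-frame block-triangular computation simply make explicit what the paper calls ``clear.''

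One small correction: $\sqrt{1-s^2}$ is not smooth at $s=\pm1$, and neither is $\sigma^{\frac{n-1}{2}}$ for even $n$. So the extension from $G^{\text{int}}$ to $G$ holds as an identity of continuous densities, not smooth ones, which is all the later arguments use.
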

\begin{proof}
The manifold $G^{\text{int}}$ is the restriction of $G$ to $|s| < 1$ and $|v| < 1$ and can be realized as the transverse intersection of smooth manifolds $\Ker(ds)\cap D^{\text{int}}Z$, where $D^{\text{int}}Z$ is the tangent, open ball bundle.  It is clear from definition \eqref{defn:Upsilon} that $\Upsilon$ is a fiber-wise mapping over $Z$ and is a diffeomorphism onto $E$ with inverse given by the inverse scaling of each fiber. Formula \eqref{eq:pullbackvolume} follows fiber-wise from the change of volume element of an open, $(n-1)$-ball by a uniform rescaling.
\end{proof}

Recall from Section \ref{SectionMainResults} that $\overline{\Omega}$ is a mwb and bdf $\rho(x) = 1-|x|^2$ and $\overline{Z}$ is a mwb with two components, each having bdf $(s,\theta)\mapsto 1\pm s$. Let $r$ denote $r:(s,\theta,v)\mapsto 1-|v|^2$ on $G$.

\begin{prop}\label{DoubleBFibrations}
$G$ is a manifold with corners with codimension at most $2$. The induced projections $\hat\pi:G\rightarrow\overline{\Omega}$ and $\hat P:G\rightarrow\overline{Z}$ are $b$-fibrations with geometric exponents 
	\begin{align}\label{eq:b-map}
		\hat\pi^*\rho = \sigma^1\ r^1, && \hat P^*(1\pm s) = (1\pm s)^1.
	\end{align}
$G$ is double $b$-fibered by $(\hat P,\hat\pi)$, in the sense of \cite{Melrose1992}. 
\end{prop}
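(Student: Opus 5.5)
The plan is to work in explicit local coordinates on $G$ and verify the conditions one at a time. Those conditions are: the mwc structure, the $b$-map exponents, $b$-submersion, and the absence of a boundary face mapped into a corner.

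\textbf{Manifold with corners.} The space $\Ker(ds)$ over $\overline{Z}$ is the bundle $\overline{Z}\times_{S^{n-1}} TS^{n-1}$: the pairs $(s,\theta,v)$ with $v\cdot\theta=0$. It is a manifold with boundary whose bhs are $\{s=\pm1\}$. Intersecting with the unit ball bundle $|v|\le 1$ adds the face $\{r=0\}$, where $r=1-|v|^2$. The differentials $ds$ and $dr$ are independent wherever both faces meet, since $ds$ is horizontal and $dr=-2v\cdot dv$ is nonzero on $|v|=1$. Hence $G$ is an mwc with three bhs $\{s=1\}$, $\{s=-1\}$, $\{r=0\}$ and corners $\{s=\pm1, r=0\}$ of codimension $2$. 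The faces $s=1$ and $s=-1$ are disjoint, so no point has codimension higher than $2$.

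\textbf{Exponents.} Next I compute $\hat\pi=\pi\circ\Upsilon$, which is
\begin{align}
(s,\theta,v)\mapsto s\theta+\sqrt{1-s^2}\,v .
\end{align}
Because $v\perp\theta$, we get $|\hat\pi|^2=s^2+(1-s^2)|v|^2$, and therefore
\begin{align}
\hat\pi^*\rho=1-s^2-(1-s^2)|v|^2=\sigma\,r .
\end{align}
Near $s=\pm1$ we have $\sigma=(1\mp s)(1\pm s)$, where the second factor is a positive smooth function. So $\hat\pi^*\rho$ is a product of bdfs with exponents $1$ for each of the three faces, up to a smooth positive factor, which gives \eqref{eq:b-map}. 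The map $\hat P(s,\theta,v)=(s,\theta)$ is the bundle projection, so $\hat P^*(1\pm s)=1\pm s$ trivially, with exponent $0$ on $\{r=0\}$. Both maps are then $b$-maps.

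\textbf{$b$-fibration.} For $\hat P$ this is straightforward: it is a fibration whose fiber is the closed unit $(n-1)$-ball. Its boundary $\{r=0\}$ maps onto the interior, the faces $\{s=\pm1\}$ map into $\partial Z$ with exponent $1$, and $b$-surjectivity follows from the product structure. No bhs is mapped to a corner since $\overline{Z}$ has no corners.

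For $\hat\pi$, each of the three bhs of $G$ maps into the single bhs of $\overline{\Omega}$, so no bhs goes to a corner. It remains to check that $\hat\pi$ is a $b$-submersion, i.e. that ${}^b d\hat\pi$ is surjective on ${}^bT$ at every point. This is the main obstacle, particularly at the corners $s=\pm1$, $r=0$ and on the face $s=\pm1$, where $\hat\pi$ collapses the whole fiber to the point $\pm\theta$.

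In the interior, $\hat\pi$ restricted to $G^{\text{int}}$ is $\pi\circ\Upsilon$. Here $\pi:E\to\Omega$ is a sphere bundle and $\Upsilon$ is a diffeomorphism by Proposition \ref{lem:interior}, so $\hat\pi$ is a submersion. Near the boundary, write $x=\hat\pi(s,\theta,v)$ and split it into a radial part $|x|$ and an angular part $\omega=x/|x|$. The radial part is controlled by $\rho=\sigma r$: the $b$-differential $d\log\rho=d\log\sigma+d\log r$ pulls back from the nonzero $b$-covectors $d\log(1\mp s)$ and $d\log r$. The angular part can be handled directly. At $s=1$ we have $\omega=\theta+\sqrt{\sigma}\,v+\dots$, and $\theta$ ranges freely over $S^{n-1}$, so $d\omega$ is onto $TS^{n-1}$ through the $\theta$-derivatives alone. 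At $r=0$, $\theta$ again varies freely. Together with the radial surjectivity at the level of ${}^bT$, this gives surjectivity of ${}^b d\hat\pi$.

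I would carry this out in coordinates $(t=1-s,\theta,v)$, checking that ${}^b d\hat\pi$ sends $t\partial_t$ and $r\partial_r$ to the $\rho\partial_\rho$ direction, and that $\partial_\theta$ maps onto the tangential directions. Finally, the double $b$-fibration property in the sense of \cite{Melrose1992} requires in addition that $(\hat P,\hat\pi):G\to\overline{Z}\times\overline{\Omega}$ is an embedding. This holds because $\hat P$ recovers $(s,\theta)$, and in the interior $v=(x-s\theta)/\sqrt{\sigma}$ is then determined. On the faces $\sigma=0$ the map fails to be injective, so I expect one should read the condition in the $b$-sense, or only on the interior, following \cite{GrieserGruber2001}.
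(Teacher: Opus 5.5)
Your treatment of the corner structure and of the exponents matches the paper's. Both use the local product structure $W\times D^{n-1}$, and both use $v\perp\theta$ to get $|\hat\pi|^2=s^2+\sigma|v|^2$, hence $\hat\pi^*\rho=\sigma r$. You diverge on the $b$-fibration property of $\hat\pi$. The paper uses its appendix definition: $b$-normality as a codimension count, plus $\hat\pi|_{K^{\text{int}}}\colon K^{\text{int}}\to\hat\pi(K)^{\text{int}}$ being a fibration for every face $K$. It checks this face by face:
\begin{itemize}
\item on $\{s=\pm1\}$ it writes $\hat\pi=\pm p\circ\hat P$, giving ball and sphere bundles over $\partial\Omega$;
\item over $\Omega$ it uses the trivialization $J$, with fibers $S^{n-1}$;
\item on $\{r=0,\ |s|<1\}$ over $\partial\Omega$ it uses the rotations $A_y$, with fibers $S^{n-1}\setminus\{\pm x\}$.
\end{itemize}
You instead aim for Melrose's infinitesimal criterion, surjectivity of ${}^b d\hat\pi$, and that is where the gap is. In your coordinates $(t=1-s,\theta,v)$,
\begin{align}
\hat\pi=(1-t)\theta+\sqrt{t(2-t)}\,v
\end{align}
is not $C^1$ up to $t=0$: $\partial_t\hat\pi$ blows up like $t^{-1/2}$ wherever $v\neq0$. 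Your own expansion $\omega=\theta+\sqrt{\sigma}\,v+\cdots$ already shows the half-power. So $\hat\pi$ is not a $b$-map in Melrose's sense, and ``both maps are then $b$-maps'' is the step that fails. Consequently ${}^b d\hat\pi$ is only a continuous extension, not a smooth bundle map. Everything the $b$-submersion criterion is used for (local normal forms, fibration of faces, the Pushforward Theorem) presupposes smoothness up to the boundary.

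The paper's face-by-face check avoids this because it only uses $\hat\pi$ restricted to open faces, where it is smooth: $\hat\pi=\pm\theta$ on $\{s=\pm1\}$, and $\sqrt{\sigma}$ is smooth on $\{|s|<1\}$. Its appendix also only asks $b$-maps to be smooth in the interior. To repair your route you have two options.
\begin{itemize}
\item Run your differential computation only along each open face, which is essentially the paper's argument.
\item Give $G$ the smooth structure in which $w=\sqrt{1\mp s}$ is the bdf of the faces over $s=\pm1$. Then $\hat\pi=(1-w^2)\theta+w\sqrt{2-w^2}\,v$ is smooth, and your computation does show it is a genuine $b$-fibration: $w\partial_w\mapsto 2\rho\partial_\rho$, $r\partial_r\mapsto\rho\partial_\rho$, and the $\theta$-directions map onto $T\partial\Omega$ since $\hat\pi=\theta$ at $w=0$. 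However, the exponent on those faces is then $2$, because $\hat\pi^*\rho=w^2(2-w^2)\,r$ and likewise $\hat P^*(1\mp s)=w^2$. So this does not give the exponents in \eqref{eq:b-map}.
\end{itemize}

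Two smaller points. On $\{r=0,\ |s|<1\}$, ``$\theta$ again varies freely'' is not a proof, because $\theta$-derivatives depend on a trivialization of $\Ker(ds)$. With the rotation frames used in the paper, for $n\ge3$ they span only $\R\theta$ at $s=0$. The correct reason is that, for fixed $\theta$, the map $(s,\hat v)\mapsto s\theta+\sqrt{1-s^2}\,\hat v$ is spherical coordinates on $S^{n-1}\setminus\{\pm\theta\}$. Separately, your observation that $(\hat P,\hat\pi)$ collapses the fibers over $s=\pm1$ is correct. The paper's proof takes ``double $b$-fibered'' to mean only that both maps are $b$-fibrations, so it checks no embedding condition.
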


\begin{proof}
By the local trivialization of the distribution $\Ker(ds)$, about each point $(s,\theta)\in\overline{Z}$ there is an open $U\subset\overline{Z}$ containing $(s,\theta)$ and $W\subset\R^{n-1}\times[0,\infty)$ such that 
    \begin{align}
        P^{-1}(U)\simeq W\times D^{n-1}
    \end{align}
where $D^{n-1}$ is the closed, unit $(n-1)$-ball. The trivialization $W\times D^{n-1}$ is a product of two mwb and gives $G$ the structure of a mwc of codimension at most $2$. A vector $(s,\theta,v)\in G$ is an interior point if both
    \begin{align}\label{eq:codim-cond}
        \sigma(s,\theta,v) = 1-s^2 > 0, && r(s,\theta,v) = 1-|v|^2 > 0,
    \end{align}
where in an abuse of notation, $\sigma$ denotes the pullback $\hat P^*\sigma$ to $G$. Together, $\sigma$ and $r$ are a complete set of (local) bdf for $G$. The codimension of the vector $(s,\theta,v)$ is determined by how many of \eqref{eq:codim-cond} fail. The manifold boundary of $G$ can be expressed as disjoint, equi-codimensional components  
	\begin{align}\label{eq:G-decomp}
		\partial G = (D^{\text{int}}\overline{Z}\cap G|_{\partial Z})\cup(S\overline{Z}\cap G|_{Z})\cup(S\overline{Z}\cap G|_{\partial Z}),
	\end{align}
where $D^{\text{int}}\overline{Z}$ is the open ball bundle and $S\overline{Z}$ is the sphere bundle corresponding to $D\overline{Z}$ 

\hspace{20pt} Formulas \eqref{eq:b-map} follows from computing the two pullbacks. For $\hat\pi^*\rho$, the identity $v\perp\theta$ for all $(s,\theta,v)\in\Ker(ds)$ is used. Together with Proposition \ref{lem:interior} give $\hat P$ and $\hat\pi$ the structure of $b$-maps in the sense of \cite{Melrose1992}. From \eqref{eq:b-map}, it is clear both $\hat P|_{G^{\text{int}}}$ and $\hat\pi|_{G^{\text{int}}}$ map into $Z$ and $\Omega$ respectively. It follows that both $\hat\pi$ and $\hat P$ are $b$-normal since each map to a mwb with $\coDim\leq1$ while each subface of $G$ has $\coDim\geq 1$ such that \eqref{b-normality} is immediate for the boundary faces. Moreover, the map $\hat P$ is the restriction of the canonical projection from $\Ker(ds)$ and further restriction to each component of \eqref{eq:G-decomp} defines an open, $(n-1)$-ball bundle over $\partial Z$ or an $(n-2)$-sphere bundle over $Z$ or $\partial Z$, respectively. This shows $\hat P$ is a $b$-fibration. 

\hspace{20pt} We determine $\hat\pi$ is a $b$-fibration by examining the restriction to each face. Restriction to $s^2=1$ (corresponding to $\hat\pi|_{G|_{\partial Z}}$) has factorization into fibrations $\hat\pi(\pm1,\theta,v) = \pm\theta = \pm p\circ\hat P(\pm1,\theta,v)$ where $p:\overline{Z}\mapsto S^{n-1}$ is a trivial bundle. It follows that $\hat\pi|_{G|_{\partial Z}}$ is either an open, $(n-1)$-ball bundle or an $(n-2)$-sphere bundle over $\partial\Omega$, up to the antipodal map of $\partial\Omega$, depending on whether $|v| < 1$ or $|v| = 1$. Now suppose $s^2 < 1$. The fiber above $x\in\overline{\Omega}$ is
    \begin{align}\label{eq:hat-pi-fiber}
        \hat\pi^{-1}\{x\} = \Big\{(s,\theta,v)\in G|_Z : x = s\theta + \sqrt{1-s^2}\ v\Big\}\simeq\begin{cases}
            S^{n-1}, & |x|^2 < 1,\\ S^{n-1}\setminus\{\pm x\}, & |x|^2 = 1,
        \end{cases}
    \end{align}
where the diffeomorphism arises from $p\circ\hat P:\hat\pi^{-1}\{x\}\rightarrow S^{n-1}$. The restriction $\hat\pi|_{G^{\text{int}}}$ corresponds to the first case in \eqref{eq:hat-pi-fiber}. The fiber trivialization \eqref{eq:hat-pi-fiber} extends to a bundle map $J:G^{\text{int}}\rightarrow\Omega\times S^{n-1}$ defined by
    \begin{align}
        J:(s,\theta,v)\mapsto\big(\hat\pi(s,\theta,v),\theta\big), && J^{-1}:(x,\theta)\mapsto\Big(x\cdot\theta,\theta,\frac{x-s\theta}{\sqrt{1-s^2}}\Big).
    \end{align}
The bundle map is well-defined as $(s,\theta,x)\in G^{\text{int}}$ implies $|s| = |x\cdot\theta|\leq|x| < 1$ by Cauchy-Schwarz. The second case of \eqref{eq:hat-pi-fiber} corresponds to $\hat\pi|_{S\overline{Z}\cap G|_Z}$. The formula defining $J$ also defines a local trivialization of the cylinder bundle $\hat\pi|_{S\overline{Z}\cap G|_Z}$ but fails to extend globally. Explicitly, in a small neighborhood of $x\in U\subset\partial\Omega$, there is a unique extension $A_{\underline{\;\;}}:U\rightarrow SO(n)$ of identity such that $A_yy=x$, $A_y(y^{\perp}) = x^{\perp}$, and $A_y$ fixes $\Span\{x,y\}^{\perp}$ for each $y\in U$. The local trivialization is $(\hat\pi,A_{\hat\pi}) : J(\hat\pi^{-1}(U))\rightarrow U\times(S^{n-1}\setminus\{\pm x\})$ where the second component is defined by rotation $A_{\hat\pi}\theta$. 
\end{proof}

\hspace{20pt} In a double fibration setting, we can formulate integral operators by the pullback and pushforward operators generated by the pair of fibrations. Formulas for the adjoint operators are given by the dual formulas (see \cite{Gelfand1969,Helgason1980}). In the double $b$-fibration setting, we can still formulate integral operators and adjoints in the pullback and pushforward operators generated by the pair of $b$-fibrations, up to the possible inclusion of singular operators in the formulas. For a ($b$-)fibration $F$, let $F_*$ denote the pushforward map between measures. 

\begin{prop}\label{Prop:Pullback-Pushforward}
For $u\in C^{\infty}_c(\Omega)$,
	\begin{align}\label{eq:RadonPullPush}
		Ru\ d\theta ds &= \sigma^{\frac{n-1}{2}}\ \hat P_*\big(\hat\pi^*u\ dv_{(s,\theta)}d\theta ds\big).
	\end{align}
For $w\in C^{\infty}_c(Z)$,
	\begin{align}\label{eq:BackProjPullPush}
		R^*w\ dx &= \hat\pi_*\big(\sigma^{\frac{n-1}{2}}\ \hat P^*w\ dv_{(s,\theta)}d\theta ds\big).
	\end{align}
\end{prop}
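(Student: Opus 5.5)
The plan is to prove both identities by working on the interior $G^{\text{int}}$, where by Proposition \ref{lem:interior} the map $\Upsilon$ is a diffeomorphism onto $E$ and a bundle map over $Z$. Since $u\in C^\infty_c(\Omega)$ and $w\in C^\infty_c(Z)$, the integrands $\hat\pi^*u$ and $\hat P^*w$ are supported away from the boundary faces where the $b$-structure matters. Hence all pushforwards reduce to ordinary fiber integrals over $G^{\text{int}}$, and the boundary of $G$ plays no role.

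For \eqref{eq:RadonPullPush}, fix $(s,\theta)\in Z$ and compute the pushforward $\hat P_*$ as fiber integration over the unit ball $G|_{(s,\theta)}$ with respect to $dv_{(s,\theta)}$. This gives
\[
\hat P_*\big(\hat\pi^*u\ dv\,d\theta\,ds\big)=\Big(\int_{|v|<1}u\big(s\theta+\sqrt{1-s^2}\,v\big)\,dv_{(s,\theta)}\Big)\,d\theta\,ds,
\]
with $v$ ranging over the unit ball of $\theta^\perp$. The substitution $y=s\theta+\sqrt{1-s^2}\,v$ parametrizes $\Pi(s,\theta)\cap\Omega$, and it has constant Jacobian $\sigma^{\frac{n-1}{2}}$, which is exactly the content of \eqref{eq:pullbackvolume}. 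Also $u$ vanishes on $\Pi(s,\theta)\setminus\Omega$. The fiber integral is therefore $\sigma^{-\frac{n-1}{2}}Ru(s,\theta)$, which yields \eqref{eq:RadonPullPush}. For $(s,\theta)$ near $\partial Z$ the hyperplane misses $\operatorname{supp}u$, so both sides vanish there and no degeneration of $\sigma$ intervenes.

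For \eqref{eq:BackProjPullPush}, I will argue by duality rather than by computing $\hat\pi$-fibers directly. Pair the right-hand side with a test function $\phi\in C^\infty_c(\Omega)$. The defining property of pushforward of measures, $\int\phi\,\hat\pi_*\mu=\int\hat\pi^*\phi\,\mu$, gives
\[
\int_G \hat\pi^*\phi\;\hat P^*w\;\sigma^{\frac{n-1}{2}}\,dv\,d\theta\,ds .
\]
Pushing forward along $\hat P$ instead, using the first part with $u=\phi$, turns this into $\int_Z R\phi\, w\,d\theta\,ds$. By the $L^2$-adjointness defining $R^*$, this equals $\int_\Omega\phi\,R^*w\,dx$. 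Since $\phi$ is arbitrary, the measures agree on $\Omega$.

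There is one subtlety: $\hat\pi$ maps onto $\overline\Omega$, so we must also check that $\hat\pi_*$ of the measure puts no mass on $\partial\Omega$. This holds because $\hat\pi^{-1}(\partial\Omega)$ is a boundary face of $G$ (by \eqref{eq:b-map} it lies in $\{\sigma r=0\}$), so it has measure zero, while the density is integrable since $w$ is compactly supported. A direct alternative is to use the trivialization $J$ from the proof of Proposition \ref{DoubleBFibrations}: in the coordinates $(x,\theta)$, the density $\sigma^{\frac{n-1}{2}}dv\,d\theta\,ds$ becomes $dx\,d\theta$, again by \eqref{eq:pullbackvolume}, and integrating over $\theta$ gives \eqref{eq:defn-backproj}.

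The main point needing care is this Jacobian bookkeeping. One must confirm that the factor $\sigma^{\frac{n-1}{2}}$ is attached as stated, outside $\hat P_*$ in the first formula and inside $\hat\pi_*$ in the second. This follows from \eqref{eq:pullbackvolume} via $\Upsilon^*(d\mu\,d\theta\,ds)=\sigma^{\frac{n-1}{2}}dv\,d\theta\,ds$ together with the identity $d\mu_{(s,\theta)}\,d\theta\,ds = dx\,d\theta$ on $\I$, which is the standard coarea identity underlying the $L^2$-duality of $R$ and $R^*$.
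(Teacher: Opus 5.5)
Your proposal is correct and follows the paper's own argument. The first identity comes from the change of variables through $\Upsilon$ on $G^{\text{int}}$ using \eqref{eq:pullbackvolume}, with $\sigma^{\frac{n-1}{2}}$ pulled outside $\hat P_*$ because it is a function on the base, and the second follows by duality. Your additional checks (vanishing near $\partial Z$, $\hat\pi^{-1}(\partial\Omega)=\partial G$ carrying no mass, and the direct computation via $J$ and the coarea identity on $\I$) are correct refinements that the paper leaves implicit.
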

\begin{proof}
For $u\in C^{\infty}_c(\Omega)$, the Radon transform $Ru$ can be realized as the pullback-pushfoward formula $Ru\ d\theta ds =  P_*(\pi^*u\ d\mu_{(s,\theta)}d\theta ds)$ where $P$ and $\pi$ are restricted to $E$ \eqref{eq:incidence-rel} (see \cite{Gelfand1969,Helgason1980}). By definition, $\supp(\pi^*u)\subset E$ and by Proposition \ref{lem:interior}, $\Upsilon|_{G^{\text{int}}}$ \eqref{defn:Upsilon} is a diffeomorphism on $\supp(\pi^*u)\subset E$ and acts as the change of coordinates by 
    \begin{align}\label{eq:change-of-coor}
		Ru\ d\theta ds &= \int_{P^{-1}(s,\theta)}\pi^*u\ d\mu_{(s,\theta)}d\theta ds \\
        &= \int_{\Upsilon^{-1}\big(P^{-1}(s,\theta)\big)}\hat\pi^*u\ \Upsilon^*(d\mu_{(s,\theta)}d\theta ds)\\
													&= \int_{\hat P^{-1}(s,\theta)}\hat\pi^*u\ \sigma^{\frac{n-1}{2}}dv_{(s,\theta)}d\theta ds,\\
													&= \sigma^{\frac{n-1}{2}}\int_{\hat P^{-1}(s,\theta)}\hat\pi^*u\ dv_{(s,\theta)}d\theta ds = \sigma^{\frac{n-1}{2}}\hat P_*(\hat\pi^* u\ dv_{(s,\theta)}d\theta ds),
	\end{align}
where the pullback volume \eqref{eq:pullbackvolume} was used. The pushforward $\hat P_*$ commutes with multiplication by $\sigma^{\frac{n-1}{2}}$ due to the fibers of $\hat P$ being independent of $s$. Formula \eqref{eq:BackProjPullPush} follows by duality.
\end{proof}

Before discussing the proof of Theorem \ref{RadonExpansion}, we prove Lemma \ref{IntegralFiberSphere} which states the phg structure of a function appearing in the proof of Theorem \ref{RadonExpansion} as well as Theorem \ref{MainResult}. This function arises in studying the integration of a phg component over a fiber of $G$. See Appendix \ref{AlgebraOnManifold} for the meaning of phg expansion. For $a\in C^{\infty}(S^{n-1})$, let $\tilde a(x) = a(\frac{x}{|x|})$ denote the radial extension of $a$ to smooth function in a neighborhood of $S^{n-1}\subset\overline{\Omega}$.

\begin{lem}\label{IntegralFiberSphere}
Let $a\in C^{\infty}(S^{n-1})$ and $\chi\in C^{\infty}_c[0,1)$ a cutoff $\chi(\rho)\equiv1$ near $0$. The function $A:\overline{Z}\times[-1,1]_{\tau}\rightarrow\C$ defined by
	\begin{align}\label{eq:fiber-integral}
		A(s,\theta,\tau) = \int_{\partial\Omega\cap\theta^{\perp}}\hat\pi_{(s,\theta)}^*(\tilde a\chi)(\tau\phi)\ d\phi_{\theta},
	\end{align}
where $d\phi_{\theta}$ is the Lebesgue measure on the $(n-2)$-sphere $\partial\Omega\cap\theta^{\perp}$, is smooth and symmetric with respect to the involutions $(s,\theta)\mapsto(-s,-\theta)$ and $\tau\mapsto-\tau$. There exists $A_p\in C^{\infty}(S^{n-1})$ such that near $\sigma = 0$, $A$ has phg expansion 
	\begin{align}\label{eq:fiber-integral-expansion}
		A(s,\theta,\tau)\sim\sum_{m\geq0}\sigma^m\sum_{p=0}^mA_p\big(\textstyle\frac{s}{|s|}\theta\big)\ \displaystyle{m-1\choose p-1}\tau^{2p}.
	\end{align}
\end{lem}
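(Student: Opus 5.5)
The plan is to compute the integrand in \eqref{eq:fiber-integral} explicitly. Then I will show that, near $\sigma=0$, $A$ depends on $(s,\tau)$ only through the single variable $t^2=\sigma\tau^2/(1-\sigma)$. Once that is known, the binomial coefficients in \eqref{eq:fiber-integral-expansion} come out of a geometric series.

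\textbf{Explicit form and global properties.} By \eqref{defn:Upsilon}, $\hat\pi(s,\theta,\tau\phi)=x:=s\theta+\sigma^{1/2}\tau\phi$ with $\phi\perp\theta$, $|\phi|=1$. Hence
\begin{align}
|x|^2=s^2+\sigma\tau^2, \qquad \rho(x)=\sigma(1-\tau^2).
\end{align}
The function $\tilde a\chi(\rho)$ is smooth on all of $\overline{\Omega}$. Indeed, $\chi(\rho)$ vanishes for $\rho$ near $1$, i.e.\ near $x=0$, which is the only place where the radial extension $\tilde a$ fails to be smooth. So $\hat\pi^*(\tilde a\chi)$ is smooth on $G$. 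Integrating over the compact fiber sphere $\partial\Omega\cap\theta^\perp$, which varies smoothly with $\theta$, then gives smoothness of $A$ on $\overline{Z}\times[-1,1]$.

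The symmetries are read off from the formula for $x$. The substitution $\phi\mapsto-\phi$ preserves $d\phi_\theta$ and turns $\tau$ into $-\tau$. The map $(s,\theta)\mapsto(-s,-\theta)$ fixes $s\theta$, $\sigma$ and $\theta^\perp$, so $A$ is unchanged.

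\textbf{Reduction to one variable near $\sigma=0$.} Near $\sigma=0$ we have $\rho\le\sigma$ small, so $\chi\equiv1$, and $|s|$ is close to $1$, so we may divide by $s$. Since $\tilde a$ is $0$-homogeneous,
\begin{align}
\tilde a(x)=a\Big(\tfrac{s}{|s|}\,\tfrac{\theta+t\phi}{\sqrt{1+t^2}}\Big),\qquad t:=\frac{\sigma^{1/2}\tau}{|s|},\qquad t^2=\frac{\sigma\tau^2}{1-\sigma}.
\end{align}
Write $\theta'=\frac{s}{|s|}\theta$ and set
\begin{align}
F(\theta',t)=\int_{\partial\Omega\cap\theta'^\perp} a\Big(\tfrac{\theta'+t\phi}{\sqrt{1+t^2}}\Big)\,d\phi .
\end{align}
Since $\theta'^\perp=\theta^\perp$, this gives $A=F(\theta',t)$. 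The function $F$ is smooth in $t$ near $0$, and it is even in $t$ by $\phi\mapsto-\phi$. Hence $F(\theta',t)=H(\theta',t^2)$ with $H$ smooth, by the Whitney/Borel lemma on even functions. Define $A_p(\theta')$ as the $p$-th Taylor coefficient of $H$ in $t^2$, i.e.\ $\frac{1}{p!}\partial^p_{t^2}H(\theta',0)$; these are smooth on $S^{n-1}$. For example, $A_0=\omega_{n-2}\,a(\theta')$, and $A_1$ is a multiple of the tangential Laplacian of $a$ at $\theta'$.

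\textbf{Expansion.} Substituting $t^2=\sigma\tau^2(1-\sigma)^{-1}$ into the Taylor expansion of $H$ gives
\begin{align}
A\sim\sum_{p\ge0}A_p(\theta')\,\sigma^p\tau^{2p}\,(1-\sigma)^{-p}.
\end{align}
For $p\ge1$ we use $(1-\sigma)^{-p}=\sum_{j\ge0}\binom{p-1+j}{p-1}\sigma^j$, and reindex with $m=p+j$. The coefficient of $\sigma^m\tau^{2p}$ is then $A_p\binom{m-1}{p-1}$. The case $p=0$ contributes only at $m=0$, which is exactly what the convention on $\binom{m-1}{-1}$ encodes. This yields \eqref{eq:fiber-integral-expansion}.

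The asymptotic equivalence holds uniformly in $\tau\in[-1,1]$ and $\theta$, because $t^2\le\sigma/(1-\sigma)$ is small. Each truncation of the double series therefore has remainder $O(\sigma^{N})$ with all derivatives, from Taylor's theorem for $H$ and for $(1-\sigma)^{-p}$.

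\textbf{Main obstacle.} The delicate point is recognising that the dependence collapses to the single variable $t^2$, which is what produces the specific binomial structure. A direct Taylor expansion in $(\sigma,\sigma\tau^2)$ would only give some unstructured double series. Beyond that, one needs to check the smoothness of $H$ uniformly in $\theta'$, i.e.\ the parametrised version of the even-function lemma. Here I would expand $a$ in the parameter $t$ along the smoothly varying sphere $\theta'^\perp$ via a local frame and observe that all odd moments of $\phi$ vanish.
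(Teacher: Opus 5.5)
Your proof takes the same route as the paper. You write $\hat\pi(s,\theta,\tau\phi)=s\theta+\sigma^{1/2}\tau\phi$ and pass to $t=\sigma^{1/2}\tau/|s|$ (the paper's $q=\sigma^{1/2}\tau/s$). You get evenness in $t$ from $\phi\mapsto-\phi$, substitute $t^2=\sigma\tau^2/(1-\sigma)$, and expand $(1-\sigma)^{-p}$ binomially. The symmetry and expansion parts are correct. Your uniform remainder control via $t^2\le\sigma/(1-\sigma)$ is more explicit than the paper's.

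The step that fails is your global smoothness argument, where you claim $\hat\pi^*(\tilde a\chi)$ is smooth on $G$ and then integrate over the fiber sphere. The map $\hat\pi$ contains $\sqrt{1-s^2}$, which is not smooth at $s=\pm1$, and the pullback genuinely is not smooth there. Take $\theta=e_1$, $v=\tau e_2$ and $a(\omega)=\omega\cdot e_2$. Then $\hat\pi^*(\tilde a\chi)=\sigma^{1/2}\tau/\sqrt{s^2+\sigma\tau^2}\approx\sqrt{2(1-s)}\,\tau$ near $s=1$, which is not even $C^1$. Its integral over $\phi$ vanishes, and that is the point: $A$ is smooth at $\sigma=0$ only after integration, because the antipodal symmetry kills the odd powers of $\sigma^{1/2}$.

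So restrict your integration argument to $|s|<1$, where $\sqrt{1-s^2}$ is smooth; the paper restricts it the same way. Near $\sigma=0$, read off smoothness from your own reduction $A=H\big(\theta',\sigma\tau^2/(1-\sigma)\big)$, with $H$ jointly smooth by the parametrized Whitney lemma. This is manifestly smooth in $(s,\theta,\tau)$, and it is how the paper obtains smoothness near $s=\pm1$.

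There is also a minor, harmless slip. For $s<0$ the correct formula is $\tilde a(x)=a\big((\theta'+t\phi)/\sqrt{1+t^2}\big)$, whereas yours produces $-t\phi$. Evenness in $t$ absorbs this.
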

\begin{proof}
The symmetry of \eqref{eq:fiber-integral} under $(s,\theta)\mapsto(-s,-\theta)$ follows from $\hat\pi_{(s,\theta)} = \hat\pi_{(-s,-\theta)}$ and the identification of the spaces $(-\theta)^{\perp} = \theta^{\perp}$ and measures $d\phi_{\theta}=d\phi_{-\theta}$. The symmetry in $\tau\mapsto-\tau$ follows from applying the $d\phi_{\theta}$-preserving antipodal reparameterization to $\phi\in\partial\Omega\cap\theta^{\perp}$ and to replace $\tau$ with $-\tau$. We use the evenness in $\tau$ to assume $\tau\in[0,1]$ for the remainder of the proof. 

\hspace{20pt} Evaluating the pullback in \eqref{eq:fiber-integral}, 
    \begin{align}\label{eq:pullback-eval}
        \int_{\partial\Omega\cap\theta^{\perp}}\hat\pi_{(s,\theta)}^*(\tilde a\chi)(\tau\phi)\ d\phi_{\theta} = \chi((1-\tau^2)\sigma)\underbrace{\int_{\partial\Omega\cap\theta^{\perp}}a\bigg(\frac{s\theta + \sigma^{1/2}\tau\phi}{\sqrt{s^2 + \sigma \tau^2}}\bigg)\ d\phi_{\theta}}_{J(\theta;s,\tau)}.
    \end{align}
Fix $(s,\tau)\in[-1,1]\times[0,1]$. We show \eqref{eq:fiber-integral} is smooth on $S^{n-1}$. For fixed $\theta$ as reference, let $\theta\in U\subset\partial\Omega$ be an open neighborhood small enough that there is a unique, smooth extension $A_{-}:U\rightarrow SO(n)$ of the identity $A_{\theta} = id$ such that for all $\theta'\in U$ we have $A_{\theta'}\theta' =  \theta$, $A_{\theta'}({\theta'}^{\perp})= \theta^{\perp}$, and $A_{\theta'}$ fixes $\Span(\theta,\theta')^{\perp}$, then for $\theta'\in U$
    \begin{align}
        J(\theta';s,\tau) &= \int_{A_{\theta'}(\partial\Omega\cap{\theta'}^{\perp})}A_{\theta'}^{-*}a\bigg(\frac{s A_{\theta'}\theta' + \sigma^{1/2}\tau A_{\theta'}\phi'}{\sqrt{s^2 + \sigma\tau^2}}\bigg)|-1|^{n-2}d\phi_{\theta}\\
        &= \int_{\partial\Omega\cap\theta^{\perp}}A_{\theta'}^{-*}a\bigg(\frac{s\theta + \sigma^{1/2}\tau\phi}{\sqrt{s^2 + \sigma\tau^2}}\bigg)d\phi_{\theta},
    \end{align}
is smooth in $U$ since $A_{\theta'}$ and $\phi\mapsto a(\frac{s\theta + \sigma^{1/2}\tau\phi}{\sqrt{s^2 + \sigma\tau^2}})$ are smooth on $\supp(\hat\pi_{(s,\theta)}^*\chi)$ and $\partial\Omega\cap\theta^{\perp}$ is compact. 

\hspace{20pt} Now fix $\theta\in S^{n-1}$ and consider $(s,\tau)\in(-1,1)\times[0,1]$. The assignment $(s,\tau)\mapsto a(\frac{s\theta + \sigma{1/2}\tau\phi}{\sqrt{s^2 + \sigma\tau^2}})$ is smooth on $\supp(\hat\pi_{(s,\theta)}^*\chi)$ (recall $\sigma^{1/2}=\sqrt{1-s^2}$ is smooth on $(-1,1)\ni s$) and $\partial\Omega\cap\theta^{\perp}$ compact shows \eqref{eq:fiber-integral} is smooth near $s=0$ and in $\tau$ when $s\in(-1,1)$.

\hspace{20pt} Now keep $\theta\in S^{n-1}$ fixed and initially fix $0 < s < 1$. From \eqref{eq:pullback-eval}, write 
	\begin{align}\label{eq:defn-A}
		\hat\pi_{(s,\theta)}^*\tilde a\ (\tau\phi) = a\bigg(\frac{s}{|s|}\ \frac{\theta + \frac{\sigma^{1/2}\tau}{s}\phi}{\sqrt{1 + (\frac{\sigma^{1/2}\tau}{s})^2}}\bigg) = a\bigg(\frac{\theta + \frac{\sigma^{1/2}\tau}{s}\phi}{(1 + (\frac{\sigma^{1/2}\tau}{s})^2)^{1/2}}\bigg).
	\end{align}
Set $q = \frac{\sigma^{1/2}\tau}{s}$ and consider $F(-;\theta):\R\rightarrow\C$ defined by 
    \begin{align}
        F(q;\theta) = \int_{\partial\Omega\cap\theta^{\perp}}a\bigg(\frac{\theta + q\phi}{(1 + q^2)^{1/2}}\bigg)d\phi_{\theta}.
    \end{align}
The smoothness of $a$ and of $q\mapsto (\theta + q\phi)/\sqrt{1 + q^2}$ from $\R\rightarrow S^{n-1}$ and the compactness of $\partial\Omega\cap\theta^{\perp}$ ensures $F$ is smooth in $q$. Moreover, if we reparameterize the integration by $\phi = -\phi'$, which has Jacobian $|-1|^{n-2} =1$, we recover the same integral with $q$ replaced with $-q$ and shows that $F(q;\theta)$ is even in $q$. By evenness in $q$ and smoothness in both variables, the Taylor expansion of $F(q;\theta)$ at $q=0$ is in even powers of $q$ and has coefficients $A_p\in C^{\infty}(S^{n-1})$. Substitute $q = \frac{\sigma^{1/2}\tau}{s}$ for $q$ to find
    \begin{align}
        F(\textstyle\frac{\sigma^{1/2}\tau}{s};\theta) \sim \displaystyle\sum_{p=0}^{\infty}A_p(\theta)\ \sigma^p\ \tau^{2p}\ s^{-2p}.
    \end{align}
in the sense of \eqref{asymptoticequivalence}. Expand $s^{-2p} = (1-\sigma)^{-p} = \sum_{m=0}^{\infty}{m+p-1\choose p-1}\sigma^m$ valid for $|\sigma|<1$ (or $0 < s^2\leq1$) and convolve the coefficients to find \eqref{eq:fiber-integral-expansion} for $s > 0$. This expansion also shows \eqref{eq:fiber-integral} is smooth in $(s,\tau)$ near $s=1$ as all components are smooth in $(s,\tau)$.

\hspace{20pt} For $-s > 0$, we again use $\hat\pi_{(s,\theta)} =\hat\pi_{(-s,-\theta)}$ and $\theta^{\perp} = (-\theta)^{\perp}$ and $d\phi_{\theta}=d\phi_{-\theta}$ to write   
    \begin{align}
        \int_{\partial\Omega\cap\theta^{\perp}}\hat\pi_{(s,\theta)}^*(\tilde a\chi)(\tau\phi)\ d\phi_{\theta} = \int_{\partial\Omega\cap(-\theta)^{\perp}}\hat\pi_{(-s,-\theta)}^*(\tilde a\chi)(\tau\phi)\ d\phi_{-\theta}.
    \end{align}
Expansion \eqref{eq:fiber-integral-expansion} is valid for $-s > 0$ at the cost of replacing $\theta$ with  $-\theta$ and shows \eqref{eq:fiber-integral-expansion} is smooth in $(s,\tau)$ near $s = -1$.
\end{proof}

\subsection{Proof of Theorem \ref{RadonExpansion}}\label{SubsectionProofRadon}

\begin{proof}[Proof of Theorem \ref{RadonExpansion}.]
Fix $(s,\theta)\in Z$ away from $s = 0$ and write $Ru(s,\theta) = R_{(s,\theta)}u$. Set $x =  \hat\pi(s,\theta,v) = \hat\pi_{(s,\theta)}(v)$. From \eqref{eq:RadonPullPush} and $\hat P^{-1}(s,\theta) = \{(s,\theta)\}\times(\overline{\Omega}\cap\theta^{\perp})$ with Lebesgue measure $dv_{(s,\theta)} = dv_{\theta}$ (the topology of the fiber is independent of $s$), write 
	\begin{align}
		R_{(s,\theta)}u = \int_{\overline{\Omega}\cap\theta^{\perp}}\hat\pi_{(s,\theta)}^*u\ \sigma^{\frac{n-1}{2}}dv_{\theta},
	\end{align}
where we have identified $R_{(s,\theta)}u$ with the density $R_{(s,\theta)}u\ d\theta ds$. Set $u = \tilde a(x)\ \rho^{\gamma}\log(\rho)^{\el}\ \chi(\rho)$ as in the statement of Theorem \ref{RadonExpansion}. From \eqref{eq:b-map}, $\hat\pi_{(s,\theta)}^*\rho = r\sigma$. The logarithm $\hat\pi_{(s,\theta)}^*\log(\rho)^{\el}$ has binomial expansion 
	\begin{align}\label{RadonPullbackFormula}
		R_{(s,\theta)}u &= \sigma^{\frac{n-1}{2} + \gamma}\sum_{k=0}^{\el}\log(\sigma)^{\el}\ {k\choose \el}\int_{\overline{\Omega}\cap\theta^{\perp}}\hat\pi_{(s,\theta)}^*(\tilde a\chi)\ r^{\gamma}\ \log(r)^{\el-k}\ dv_{\theta}\\
        &=\sigma^{\frac{n-1}{2} + \gamma}\sum_{k=0}^{\el}\log(\sigma)^l{\el\choose k}\ \partial_{\eta}^{\el-k}\Big|_{\eta = \gamma}\int_{\overline{\Omega}\cap\theta^{\perp}}\hat\pi_{(s,\theta)}(\tilde a\chi)\ r^{\eta}\ dv_{\theta},
	\end{align}
where the differentiation in $\eta$ is valid due absolute integrability (the integrand is bounded and $\overline{\Omega}\cap\theta^{\perp}$ compact) on compact subsets in the region $\Real(\eta) > -1$. It remains to study 
    \begin{align}\label{eq:FiberIntegralRadon}
        I_{(s,\theta)}(\eta) = \int_{\overline{\Omega}\cap\theta^{\perp}}\hat\pi_{(s,\theta)}^*(\tilde a\chi)\ r^{\eta}\ dv_{\theta}.
    \end{align}
Set $v = \tau\phi$ where $\tau = |v|$ and $\phi\in\partial\Omega\cap\theta^{\perp}$, expand $r = 1-\tau^2$ and rearrange \eqref{eq:FiberIntegralRadon} to write 
	\begin{align}\label{eq:FiberIntegralRadonChangeVariable}
		I_{(s,\theta)}(\eta) = \int_{\tau = 0 }^1(1-\tau^2)^{\eta}\ \tau^{n-2}\ \int_{\partial\Omega\cap\theta^{\perp}}\hat\pi_{(s,\theta)}^*(\tilde a\chi)(\tau\phi)\ d\phi_{\theta}\ d\tau,
	\end{align}
where the inner integral is \eqref{eq:fiber-integral}. By Lemma \ref{IntegralFiberSphere}, we may substitute expansion \eqref{eq:fiber-integral-expansion} in \eqref{eq:FiberIntegralRadonChangeVariable}, exchange integration and asymptotic summation, and expand $\chi((1-\tau^2)\sigma)\sim1$ near $\sigma=0$ to find 
	\begin{align}\label{eq:FiberIntegralRadonExpansion}
		I_{(s,\theta)}(\eta)\sim\frac{1}{2}\sum_{m\geq0}\sigma^m\sum_{p=0}^mA_p\big(\textstyle\frac{s}{|s|}\theta\big)\displaystyle{m-1\choose p-1}\ \int_0^1(1-\tau^2)^{\eta}\ \tau^{n-3+2p}\ 2\tau\ d\tau.
	\end{align}
We recognize the integral in $\tau$ as $B(\eta + 1,\frac{n-1}{2} + p)$ \eqref{eq:beta} after changing variables to $u = \tau^2$. Substituting \eqref{eq:FiberIntegralRadonExpansion} to equation \eqref{RadonPullbackFormula} and rearranging finds \eqref{eq:RadonExpansion} with coefficients \eqref{eq:Radon-coefficients}. Integral formula \eqref{eq:beta} for $B(\eta + 1,\beta)$ requires $\Real(\eta + 1) > 0$, showing the necessity of the integrability condition on $\gamma$. 
\end{proof}

\subsection{Proof of Theorem \ref{MainResult}}\label{SubsectionProofBackproj}

\textbf{Setting for proof.} We study $R^*u$ for $u$ a phg component on $\overline{Z}$ by identifying $R^*u$ with the $b$-density $\mu = R^*u\frac{dx}{\rho}$ (see Appendix \ref{PullbackPushforwardTheorems} for definition of a phg $b$-density). We associate to $\mu$ the meromorphic function $\mu_M$, valued in densities $|\Lambda^{n-1}(\partial\Omega)|$\footnote{The manifold $\partial\Omega$ is closed. The $b$-densities $\prescript{b}{}\Omega(\partial\Omega)$ and the smooth densities $|\Lambda^{n-1}(\partial\Omega)|$ coincide.} and given by a Mellin transform 
	\begin{align}\label{eq:Mellin-mu}
		\mu_M(z) = \int_{\rho=0}^{\infty}\chi(\rho)\rho^z\ \mu,
	\end{align}
where $\chi\in C^{\infty}_c[0,1)$ is any fixed cutoff with $\chi\equiv1$ near $0$. This defines a correspondence between the phg components of $R^*u$ and the poles of $\mu_M$ in the Mellin variable $z$ (see also Appendix \ref{MellinBackground}). To study the meromorphic, density-valued $\mu_M$, we fix $h\in C^{\infty}(\partial\Omega)$ and associate the meromorphic function $\langle h,\mu_M\rangle_{\partial\Omega}$ defined by the pairing $C^{\infty}(\partial\Omega)\times|\Lambda^{n-1}(\partial\Omega)|\rightarrow\C$ of smooth functions and densities given by integration of $h\mu$. 

\hspace{20pt} We demonstrate a few Propositions and Lemmas before arguing for Theorem \ref{MainResult}.  Recall $\tilde h(x) = h(\frac{x}{|x|})$ denotes the radial extension of $h\in C^{\infty}(\partial\Omega)$ to a smooth function in a neighborhood of $\partial\Omega\subseteq\overline{\Omega}$. 

\begin{prop}\label{MellinPairing}
Let $a\in C^{\infty}(S^{n-1})$, $(\gamma,\el)\in\C\times\N_0$ and $\chi\in C^{\infty}_c[0,1)$ a cutoff $\chi\equiv1$ near $0$. Let $z\in\C$ with $\Real(z) > \max\{0,-\frac{n-1}{2} - \Real(\gamma)\}$. Set $u = a(\theta)\ \sigma^{\gamma}\log(\sigma)^{\el}\ \chi(1\mp s)$ and $\mu = R^*u\frac{dx}{\rho}$. Let $h\in C^{\infty}(\partial\Omega)$, then the pairing  
	\begin{align}\label{eq:pairing-prefinal}
		\langle h,\mu_M\rangle_{\partial\Omega}(z;\gamma,\el) = \int_{-1}^1\sigma^{\frac{n-1}{2} + z}\ \sigma^{\gamma}\log(\sigma)^{\el}\chi(1\mp s)\int_{S^{n-1}}a(\theta)\  B(s,\theta;z)\frac{d\theta ds}{\sigma},
	\end{align}
where the mapping $\mu\mapsto\mu_M$ uses the same cutoff $\chi$ as $u$ and 
	\begin{align}
		B(s,\theta;z) = B_h(s,\theta;z)  = \int_{\overline{\Omega}\cap\theta^{\perp}} \hat\pi^*(\tilde h\chi)(s,\theta,v)\ r^{z-1}\ dv_{(s,\theta)}.\label{B-integral-formula}
	\end{align}
\end{prop}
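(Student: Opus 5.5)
The plan is to unwind the definition of the pairing and then apply the pullback--pushforward formula \eqref{eq:BackProjPullPush} together with the geometry of $\hat\pi$ from Proposition \ref{DoubleBFibrations}. By definition,
\begin{align}
\langle h,\mu_M\rangle_{\partial\Omega}(z) = \int_{\overline{\Omega}} \tilde h\,\chi(\rho)\,\rho^{z-1}\, R^*u\ dx,
\end{align}
where I use that $\tilde h\chi(\rho)$ is the radial extension of $h$ near $\partial\Omega$. The Mellin integral in $\rho$, paired with $h$, is then the integral over $\overline{\Omega}$ of $R^*u\,dx$ tested against the function $f_z := \tilde h\chi\rho^{z-1}$. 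For $\Real(z)>0$ this test function is integrable.

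The key step is the duality identity $\int_\Omega f\, R^*w\,dx = \int_Z w\, Rf\, d\theta ds$. I will apply it with $w=u$ and $f=f_z$, then write $Rf_z$ through \eqref{eq:RadonPullPush}. This gives
\begin{align}
Rf_z = \sigma^{\frac{n-1}{2}}\int_{\overline{\Omega}\cap\theta^\perp}\hat\pi^*(\tilde h\chi)\,(\hat\pi^*\rho)^{z-1}\,dv_{(s,\theta)}.
\end{align}
Using \eqref{eq:b-map}, $\hat\pi^*\rho = \sigma r$, so $(\hat\pi^*\rho)^{z-1}=\sigma^{z-1}r^{z-1}$. The factor $\sigma^{z-1}$ is constant on fibers of $\hat P$ and comes out of the fiber integral, as in the proof of Proposition \ref{Prop:Pullback-Pushforward}. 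What remains inside is exactly $B(s,\theta;z)$ from \eqref{B-integral-formula}. Multiplying by $u = a(\theta)\sigma^\gamma\log(\sigma)^{\el}\chi(1\mp s)$ and integrating over $d\theta\,ds$ yields \eqref{eq:pairing-prefinal}, with $\sigma^{\frac{n-1}{2}+z}\sigma^{-1}$ accounting for the $\frac{d\theta ds}{\sigma}$ factor.

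The main obstacle is justification. Proposition \ref{Prop:Pullback-Pushforward} and the duality are stated only for compactly supported functions, while here $u$ is singular or non-vanishing at $\partial Z$ and $f_z$ at $\partial\Omega$. I will handle this in three steps.

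First, approximate by cutoffs: replace $u$ by $\psi_\epsilon(\sigma)u$ and $f_z$ by $\psi_\epsilon(\rho)f_z$, with $\psi_\epsilon$ vanishing near $0$ and increasing to $1$.

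Second, apply the identities, which are equivalent to Fubini on $E\cong G^{\text{int}}$ via $\Upsilon$ and \eqref{eq:pullbackvolume}.

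Third, pass to the limit by dominated convergence. On $G$ the integrand is bounded by a constant times $|\log\sigma|^{\el}\,\sigma^{\frac{n-1}{2}+\Real(\gamma+z)-1}\, r^{\Real z-1}$, since $\tilde h\chi$ and $a$ are bounded. This is integrable in $r$ because $\Real z>0$, and integrable in $\sigma$ because $\Real z > -\frac{n-1}{2}-\Real\gamma$. These are exactly the stated hypotheses. The same bound shows both sides are holomorphic in $z$ on that region, so the identity holds there as an identity of holomorphic functions.

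Finally, I will check that using the same cutoff $\chi$ is consistent. The factor $\chi(\rho)$ pulls back to $\chi(\sigma r)$, which is what $\hat\pi^*(\tilde h\chi)$ denotes in $B$. So no extra cutoff terms arise, and the formula follows.
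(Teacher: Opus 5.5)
Your proposal is correct and follows essentially the paper's argument. You reduce the pairing to an integral over $G$ via the pullback--pushforward formulas and $\hat\pi^*\rho=\sigma r$. You then justify the interchange with the same Beta-type bound, which needs exactly $\Real z>0$ and $\Real z>-\frac{n-1}{2}-\Real\gamma$. Your cutoff-plus-dominated-convergence step, and routing through $Rf_z$ rather than $\hat\pi_*$-duality, are just a more explicit version of the paper's direct appeal to Fubini--Tonelli.
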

\begin{proof}
We can write 
    \begin{align}\label{eq:pairing-1}
        \langle h,\mu_M\rangle_{\partial\Omega} &= \int_{\partial\Omega}h(\omega)\int_{\rho = 0}^{\infty}\chi(\rho)\rho^z\mu =\int_{\overline{\Omega}}\tilde h(x)\chi(\rho)\rho^z\mu = \int_{\overline{\Omega}}\tilde h(x)\chi(\rho)\rho^{z-1}R^*u\ dx,
    \end{align}
by replacing the region $(0,\infty)_{\rho}$ with $(0,1)_{\rho}$ by $\supp(\chi)\subset[0,1]_{\rho}$ and realizing the double integral as the single integral over $\overline{\Omega}$ by the Fubini-Tonelli theorem (the use of Fubini-Tonelli will be justified later by our choice of $z$). This is well-defined despite $\tilde h$ being singular at $x=0$ as $\supp(\chi)$ defines a neighborhood away from $\rho=0$ when viewed as a function on $\chi:\overline{\Omega}\rightarrow\C$. Substitute the pullback-pushforward formula \eqref{eq:BackProjPullPush} for $R^*u$ from Proposition \ref{Prop:Pullback-Pushforward} in \eqref{eq:pairing-1} and pushforward-pullback duality to write
	\begin{align}\label{eq:pairing-2}
		\int_{\overline{\Omega}}\tilde h(x)\chi(\rho)\rho^{z-1}\ \hat\pi_*\Big(\sigma^{\frac{n-1}{2}}\ \hat P^*u\ dv_{\theta}d\theta ds\Big) &= \int_G\hat\pi^*(\tilde h \chi \rho^{z-1})\sigma^{\frac{n-1}{2}}\hat P^*u\ dv_{\theta}d\theta ds.
	\end{align}
The pullback $\hat\pi^*(\rho^{z-1}) = \frac{\sigma^{z}}{\sigma}r^{z-1}$ following \eqref{eq:b-map} from Proposition \ref{DoubleBFibrations}. The pullback $\hat P^*u$ keeps the same formula as $u$ viewing $s\in[-1,1]$ as a variable on $G$ rather than $\overline{Z}$. Write \eqref{eq:pairing-2} as the double integral over the fiber, then the base $\overline{Z}$ to find  
    \begin{align}\label{eq:pairing-3}
		\langle h,\mu_M\rangle_{\partial\Omega} = \int_{\overline{Z}} \sigma^{\frac{n-1}{2} + z+\gamma}\log(\sigma)^{\el}\ \chi(1\pm s)\ a(\theta)\ B(s,\theta;z)\frac{dsd\theta}{\sigma},
	\end{align}
where $B(s,\theta;z)$ is given by formula \eqref{B-integral-formula}. 

\hspace{35pt } Use the boundedness of $\hat\pi^*(\tilde h\chi)$ on $\overline{\Omega}$ and polar coordinates $v=\tau\phi$ on $\overline{\Omega}\cap\theta^{\perp}$ to write 
    \begin{align}\label{eq:B-bound}
        |B|\leq C\int_0^1(1-\tau^2)^{\Real(z)-1}\tau^{n-2}d\tau.
    \end{align}
After setting $u = \tau^2$, the upper bound is a multiple of the Beta function \eqref{eq:beta} with $\alpha=\Real(z)$ and $\beta=\frac{n-1}{2} > 0$ and is uniformly bounded on compact subsets of $\{z\in\C : \Real(z) > 0\}$. Fix a compact subset of $\{z\in\C : \Real(z) > 0\}$ and bound \eqref{eq:pairing-3} by 
    \begin{align}
        |\langle h,\mu_M\rangle_{\partial\Omega}|\leq C\int_{-1}^1 \sigma^{\Real(\frac{n-1}{2} + z+\gamma)}\log(\sigma)^{\el}\frac{ds}{\sigma}\leq C'\int_{0}^1 \sigma^{\Real(\frac{n-1}{2} + z+\gamma-1)}\ (1-\sigma)^{\frac{1}{2}-1}\log(\sigma)^{\el}d\sigma,
    \end{align}
which is bounded when $\Real(z) > -\frac{n-1}{2}-\Real(\gamma)$ by an $\el$th derivative of \eqref{eq:beta} in the $\gamma$ variable, justifying the use of Fubini-Tonelli for $z\in\C$ with $\Real(z) > \max\{0,-\frac{n-1}{2}-\Real(\gamma)\}$ (constants $C$, and $C'$ maybe negative depending on the parity of $\el$).
\end{proof}

\hspace{20pt} The next considerations are made of $B(s,\theta;z)$ and its functional properties. We first restate \cite[Proposition 5.1]{Mazzeo2021} about the meromorphic extension of the beta functional for completeness

\begin{prop*}[5.1, \cite{Mazzeo2021}]
If $f\in C^k[0,1]$ is fixed by $t\mapsto(1-t)$, then the beta functional  
    \begin{align}\label{eq:beta-func}
        \beta[f](z) = \int_0^1f(t)\ t^{z-1}(1-t)^{z-1}dt,
    \end{align}
satisfies the functional relation 
    \begin{align}\label{eq:beta-func-rel}
        \beta[f](z) = \frac{2}{z}\Big((2z+1)\beta[f](z+1) + \beta\big[(\cdot-\textstyle\frac{1}{2})f'\big](z+1)\Big),
    \end{align}
and has meromorphic extension to $\{z\in\C:\Real(z) > -k\}$ with at most simple poles at $0,-1,...,-k+1$.
\end{prop*}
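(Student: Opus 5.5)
Since $f$ is fixed by $t\mapsto 1-t$, the plan is to prove the functional relation \eqref{eq:beta-func-rel} first for $\Real(z)>0$, where every integral converges absolutely. The meromorphic continuation will then follow by iterating the relation.

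\textbf{The functional relation.} Write $w(t)=t(1-t)$, so that $w'(t)=1-2t=-2(t-\tfrac12)$ and $t^{z-1}(1-t)^{z-1}=w^{z-1}$. The key identity is
\begin{align}
\frac{d}{dt}\big(w^{z}\big)=z\,w^{z-1}w'=-2z\,(t-\tfrac12)\,w^{z-1}.
\end{align}
I will insert the decomposition $1 = 4w + 4(t-\tfrac12)^2$, which holds because $4t(1-t)+(2t-1)^2=1$. This splits the integral as
\begin{align}
\beta[f](z)=4\beta[f](z+1)+4\int_0^1 f\,(t-\tfrac12)^2 w^{z-1}\,dt .
\end{align}
In the second integral, write $(t-\tfrac12)w^{z-1}=-\tfrac{1}{2z}\tfrac{d}{dt}(w^z)$. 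Then integrate by parts against $f(t)(t-\tfrac12)$. The boundary terms vanish since $w^z\to0$ at $t=0,1$ for $\Real(z)>0$. This gives
\begin{align}
\int_0^1 f\,(t-\tfrac12)^2w^{z-1}\,dt=\frac{1}{2z}\int_0^1\big(f+(t-\tfrac12)f'\big)w^{z}\,dt=\frac{1}{2z}\Big(\beta[f](z+1)+\beta\big[(\cdot-\tfrac12)f'\big](z+1)\Big).
\end{align}
Combining, $\beta[f](z)=\big(4+\tfrac{2}{z}\big)\beta[f](z+1)+\tfrac{2}{z}\beta[(\cdot-\tfrac12)f'](z+1)$, which is exactly $\tfrac{2}{z}\big((2z+1)\beta[f](z+1)+\beta[(\cdot-\tfrac12)f'](z+1)\big)$.

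For $f\in C^1$ the integration by parts needs no further regularity. Note also that $(\cdot-\tfrac12)f'$ is again fixed by $t\mapsto1-t$: since $f'$ is odd about $\tfrac12$ and $(t-\tfrac12)$ is odd, their product is even. Strictly speaking the symmetry is not needed for the identity itself, but it lets the induction below run inside the same symmetric class.

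\textbf{Meromorphic extension.} This goes by induction on $k$. For $k=0$, $\beta[f]$ is holomorphic on $\Real(z)>0$ by dominated convergence, with local uniform bounds $\|f\|_\infty B(\Real z,\Real z)$. Now suppose $f\in C^k$ with $k\ge1$. Then $(\cdot-\tfrac12)f'\in C^{k-1}$, and $f\in C^{k-1}$ as well. By the induction hypothesis, both $\beta[f](\cdot)$ and $\beta[(\cdot-\tfrac12)f'](\cdot)$ extend meromorphically to $\Real(z)>-(k-1)$, with at most simple poles at $0,\dots,-k+2$.

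The right-hand side of \eqref{eq:beta-func-rel} then defines a meromorphic function on $\Real(z+1)>-(k-1)$, that is, on $\Real(z)>-k$. Its possible poles are at $z=0$, from the factor $\tfrac{2}{z}$, and at $z+1\in\{0,\dots,-k+2\}$, that is, at $z\in\{-1,\dots,-k+1\}$. This extension agrees with $\beta[f]$ on $\Real z>0$, so by uniqueness it is the continuation.

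\textbf{Main obstacle: simplicity of the poles.} The factor $\tfrac2z$ is regular at $z=-j$ for $j\ge1$, so it cannot raise pole orders there. The pole at $z=-j$ comes only from the simple pole of the inner functionals at $z+1=-(j-1)$, and is therefore simple. At $z=0$, the inner functionals are evaluated at $z+1=1$, where they are holomorphic, so the only singularity comes from $\tfrac{2}{z}$ and is simple. Hence every pole is simple, and the poles lie among $0,-1,\dots,-k+1$, as claimed.
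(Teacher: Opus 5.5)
Your proof is correct. It is essentially the argument the paper cites from Mazzeo--Monard rather than reproving: the decomposition $1=4t(1-t)+4(t-\frac12)^2$, which is the identity $(t-\frac12)^2=\frac14-t(1-t)$ the paper points to, then integration by parts against $\frac{d}{dt}\big(t(1-t)\big)^z$, then iterating the resulting relation to continue $\beta[f]$ one strip at a time with at most simple poles.
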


In the proof of \cite[Proposition 5.1]{Mazzeo2021}, the algebraic identity $(u-1/2)^2 = 1/4 - u(1-u)$ was used. Using the binomial theorem, for $\el\in\N$ we also have the identity
	\begin{align}\label{interval-identity}
		(2t-1)^{2\el} = 4^{\el}\cdot(t-1/2)^{2\el} = 4^{\el}\big(4^{-1} - (1-t)t\big)^{\el} = \sum_{q = 0}^{\el}{\el\choose q}(-4)^{q}\big((1-t)t\big)^q.
	\end{align}

\hspace{20pt} We also require a version of \cite[Proposition 5.1]{Mazzeo2021} for a Mellin functional on $[0,1]$, defined for $f\in C[0,1]$ by \eqref{eq:mellin-func}. We reprove the result for completeness.

\begin{prop}\label{prop-mellin}
If $f\in C^k[0,1]$, then 
    \begin{align}\label{eq:mellin-func}
        M[f](z) = \int_0^1f(\xi)\ \xi^{z-1}d\xi,
    \end{align}
has meromorphic extension to $\{z\in\C:\Real(z) > -k\}$ with at most simple poles at $0,-1,...,-k+1$.
\end{prop}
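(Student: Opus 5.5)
We argue by induction on $k$, in the standard way one proves meromorphic continuation of Mellin transforms. For $\Real(z)>0$ the integral \eqref{eq:mellin-func} converges absolutely, since $|f(\xi)\xi^{z-1}|\le \|f\|_\infty\,\xi^{\Real(z)-1}$. It is holomorphic there: differentiating under the integral sign is justified by dominated convergence on compact subsets of $\{\Real(z)>0\}$, or one may use Morera's theorem together with Fubini.

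The key step is a functional relation analogous to \eqref{eq:beta-func-rel}. Write $\xi^{z-1} = \frac{1}{z}\frac{d}{d\xi}\xi^{z}$ and integrate by parts on $[0,1]$. For $\Real(z)>0$ and $f\in C^1[0,1]$ the boundary term at $\xi=0$ vanishes, so
\begin{align}
M[f](z) = \frac{1}{z}\Big(f(1) - M[f'](z+1)\Big).
\end{align}
The right-hand side makes sense whenever $M[f'](z+1)$ does. Since $f'\in C^{k-1}[0,1]$, the induction hypothesis says that $M[f'](\cdot)$ is meromorphic on $\{\Real>-(k-1)\}$ with at most simple poles at $0,-1,\dots,-k+2$. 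Shifting, $M[f'](z+1)$ is meromorphic on $\{\Real(z)>-k\}$ with at most simple poles at $-1,\dots,-k+1$, and it is holomorphic at $z=0$ because $M[f'](1)$ lies in the region of absolute convergence. Multiplying by $\frac{1}{z}$ adds at most a simple pole at $0$ and does not increase the order of any pole at $-1,\dots,-k+1$, since $1/z$ is holomorphic and nonzero there. Thus the right-hand side defines the required meromorphic extension. It agrees with $M[f]$ on $\Real(z)>0$, so by the identity theorem the extension is unique.

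The base case $k=0$ is the holomorphy on $\Real(z)>0$ shown above. Iterating the relation $k$ times gives an explicit formula that can be recorded for later use:
\begin{align}
M[f](z) = \sum_{j=0}^{k-1}\frac{(-1)^j f^{(j)}(1)}{z(z+1)\cdots(z+j)} + \frac{(-1)^k}{z(z+1)\cdots(z+k-1)}M[f^{(k)}](z+k).
\end{align}
This displays the simple poles at $0,-1,\dots,-k+1$ directly, with residues that are finite combinations of the values $f^{(j)}(1)$.

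The only delicate point is the justification of the integration by parts at the endpoint $\xi=0$. Here the boundary term $\xi^z f(\xi)/z$ vanishes precisely because $\Real(z)>0$, so the relation must first be proved on that half-plane and only afterwards used to define the continuation. There is no real obstacle beyond this bookkeeping. Unlike the beta functional, no symmetry assumption on $f$ is needed, because there is only one singular endpoint.
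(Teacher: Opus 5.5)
Your proof is correct, but it takes a different route from the paper's.

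\textbf{The paper's route.} The paper subtracts the degree-$(k-1)$ Taylor polynomial of $f$ at the singular endpoint $\xi=0$. Each monomial contributes $\frac{f^{(q)}(0)}{q!}\frac{1}{z+q}$. The remainder is $O(\xi^k)$, so its Mellin integral converges and is holomorphic on $\Real(z)>-k$.

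\textbf{Your route.} You integrate by parts to get $M[f](z)=\frac{1}{z}\big(f(1)-M[f'](z+1)\big)$ and induct on $k$. This is the Mellin analogue of the relation \eqref{eq:beta-func-rel} for the beta functional. Your bookkeeping is sound: $M[f'](z+1)$ is holomorphic at $z=0$, the factor $1/z$ does not raise pole orders at $-1,\dots,-k+1$, and the identity theorem gives uniqueness. Your approach needs nothing beyond integration by parts and gives one closed formula on the whole half-plane $\Real(z)>-k$.

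\textbf{What the paper's approach buys.} It gives an explicit principal part: the residue at $z=-q$ is $f^{(q)}(0)/q!$, which depends only on $f$ near $\xi=0$. This is what the proof of Proposition \ref{BProperties} uses, where the continuation of $M[H_{(s,\theta)}]$ is controlled through the Taylor coefficients of $H_{(s,\theta)}$ at $\xi=0$.

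\textbf{Your closing remark is wrong.} The residues are not finite combinations of the values $f^{(j)}(1)$. In your iterated formula, the last term $\frac{(-1)^k}{z(z+1)\cdots(z+k-1)}M[f^{(k)}](z+k)$ also has simple poles at $0,-1,\dots,-k+1$. Its residues involve $\int_0^1 f^{(k)}(\xi)\,\xi^{k-q-1}\,d\xi$. After cancellation with the other terms, the total residue at $-q$ is $f^{(q)}(0)/q!$.

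Already for $k=1$ you get $\Res_{z=0}M[f]=f(1)-\int_0^1 f'(\xi)\,d\xi=f(0)$. Concretely, a smooth $f$ with $f\equiv 1$ near $0$ and $f\equiv 0$ near $1$ has all $f^{(j)}(1)=0$, yet its residue at $z=0$ is $1$. This does not affect your proof of the proposition. However, the residue description should not be ``recorded for later use'' in the form you stated.
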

\begin{proof}
Let $z$ have $\Real(z) > 0$. Replace $f$ with its Taylor expansion at $\xi=0$ and rearrange for
    \begin{align}
         M[f](z) = \sum_{q=0}^{k-1}\frac{f^{(q)}(0)}{q!}\int_0^1\xi^{q+z-1}d\xi + M[R_k](z),
    \end{align}
where $R_k\in\A^k[0,1]$ is conormal with respect to $\xi=0$. For each $q$, the integral is the simple pole $\frac{1}{z+q}$ at $z=-q$ which extend to all of $\C$, while the cornomality of the remainder $R_k$ ensures $M[R_k](z)$ is analytic for $z$ with $\Real(z + k) > 0$. 
\end{proof}

\begin{prop}\label{BProperties}
Let $B(s,\theta;z)$ be defined by \eqref{B-integral-formula}. $B$ is smooth on $\overline{Z}$ and analytic on $\{z\in\C : \Real(z) > 0\}$ and has meromorphic extension to all of $\C$ with poles at most $-\N_0$ that are at most simple. For every $z$ away from $-\N_0$, $B(s,\theta;z) = B(-s,-\theta;z)$.
\end{prop}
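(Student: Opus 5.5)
Our plan is to reduce $B$ to a one-dimensional Mellin functional in the variable $\xi = r = 1-\tau^2$, mirroring the proof of Theorem \ref{RadonExpansion}.

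\textbf{Reduction to a radial integral.} In polar coordinates $v=\tau\phi$ on $\overline{\Omega}\cap\theta^{\perp}$, with $\tau\in[0,1]$ and $\phi\in\partial\Omega\cap\theta^{\perp}$, formula \eqref{B-integral-formula} becomes
\begin{align}
B(s,\theta;z) = \int_0^1(1-\tau^2)^{z-1}\,\tau^{n-2}\,A(s,\theta,\tau)\,d\tau,
\end{align}
where $A$ is the fiber-sphere integral \eqref{eq:fiber-integral}, built from $h$ in place of $a$. By Lemma \ref{IntegralFiberSphere}, $A$ is smooth on $\overline{Z}\times[-1,1]_\tau$ and even in $\tau$. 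Evenness lets us write $A(s,\theta,\tau)=\tilde A(s,\theta,\tau^2)$ with $\tilde A$ smooth in $\tau^2\in[0,1]$ (Whitney's lemma on even functions), uniformly in the parameters $(s,\theta)$.

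We then substitute $\xi = 1-\tau^2$, so $d\xi = -2\tau\,d\tau$. This gives
\begin{align}
B(s,\theta;z) = \tfrac12\int_0^1 \xi^{z-1}(1-\xi)^{\frac{n-3}{2}}\,\tilde A(s,\theta,1-\xi)\,d\xi = \tfrac12 M[f_{s,\theta}](z),
\end{align}
with $f_{s,\theta}(\xi)=(1-\xi)^{\frac{n-3}{2}}\tilde A(s,\theta,1-\xi)$.

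\textbf{Main obstacle.} For odd $n$ the weight $(1-\xi)^{(n-3)/2}$ is smooth, but for even $n$ it is singular at $\xi=1$. We handle this by splitting $[0,1]$ with a cutoff $\psi(\xi)$ that equals $1$ near $\xi=0$ and vanishes near $\xi=1$.
\begin{itemize}
\item The piece carrying $1-\psi$ is supported away from $\xi=0$. There $\xi^{z-1}$ is entire in $z$, and $(1-\xi)^{(n-3)/2}$ is integrable because $n\ge2$. So this piece is entire in $z$ and smooth in $(s,\theta)$.
\item The piece carrying $\psi$ is $M[\psi f_{s,\theta}](z)$ with $\psi f_{s,\theta}\in C^\infty[0,1]$. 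Proposition \ref{prop-mellin}, applied for every $k$, gives a meromorphic continuation to $\C$ with at most simple poles in $-\N_0$.
\end{itemize}
The proof of Proposition \ref{prop-mellin} also gives the residues at $z=-q$ explicitly, as $\partial_\xi^q(\psi f_{s,\theta})(0)/q!$. These are smooth in $(s,\theta)$. The remainder term is analytic in $z$ for $\Real(z)>-k$, and we differentiate under the integral in $(s,\theta)$, which is justified by the smoothness in Lemma \ref{IntegralFiberSphere}. This yields smoothness of $B$ on $\overline{Z}$ away from the poles, and holomorphy on $\Real(z)>0$, where the original integral converges absolutely as in \eqref{eq:B-bound}.

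\textbf{Symmetry.} The identity $B(s,\theta;z)=B(-s,-\theta;z)$ holds for $\Real(z)>0$ directly from the symmetry of $A$ under $(s,\theta)\mapsto(-s,-\theta)$ in Lemma \ref{IntegralFiberSphere}. By uniqueness of meromorphic continuation it then extends to all $z\notin-\N_0$.
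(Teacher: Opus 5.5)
Your proof is correct. It differs from the paper mainly in how it handles even $n$.

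\textbf{The paper's route.} The paper also passes to polar coordinates and writes the fibre integral as $H_{(s,\theta)}(1-\tau^2)$, but it then splits by parity. For even $n$ it substitutes $\tau=1-2t$ and uses invariance under $t\mapsto 1-t$ to extend the integral to $[0,1]$. It expands $(1-2t)^{n-2}$ as a polynomial in $t(1-t)$ via \eqref{interval-identity}, which expresses $B$ as the finite combination \eqref{eq:B-even-linear-sum} of symmetric beta functionals, and then invokes \cite[Proposition 5.1]{Mazzeo2021}. For odd $n$ it arrives at $\tfrac12\sum_q\binom{(n-3)/2}{q}(-1)^qM[H_{(s,\theta)}](z+q)$. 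That is exactly your $\tfrac12M[f_{s,\theta}]$ with the weight $(1-\xi)^{\frac{n-3}{2}}$ expanded binomially.

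\textbf{Your route.} You treat all $n$ at once. The only non-smooth feature of $f_{s,\theta}$ is the half-integer power of $1-\xi$, coming from the polar singularity at the centre $\tau=0$ of the fibre. It sits at $\xi=1$, away from $\xi=0$ (i.e.\ $\tau=1$), which is the only place poles are generated. So a cutoff turns it into an entire term, and only the one-sided Proposition \ref{prop-mellin} is needed.

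\textbf{Comparison.} Your approach gains:
\begin{itemize}
\item uniformity in $n$;
\item no dependence on \cite[Proposition 5.1]{Mazzeo2021} or on \eqref{interval-identity};
\item explicit residues $\frac{1}{2\,q!}\partial_\xi^q f_{s,\theta}(0)$;
\item a cleaner proof of the symmetry by uniqueness of meromorphic continuation, rather than tracking it through the functional relation.
\end{itemize}
You also do not need Whitney's lemma. Near $\xi=0$ the map $\xi\mapsto\sqrt{1-\xi}$ is smooth, so $f_{s,\theta}=(1-\xi)^{\frac{n-3}{2}}A(s,\theta,\sqrt{1-\xi})$ is smooth there directly, and away from $\xi=0$ you only use boundedness and integrability.

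The paper's approach gains an exact finite formula for $B$ in terms of standard beta and Mellin functionals. This keeps the argument parallel to the $n=2$ case of \cite{Mazzeo2021} and anticipates the parity dichotomy that reappears in Lemma \ref{BDensityExpansion}.
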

\begin{proof}
Formula \eqref{B-integral-formula} defines an analytic function on $\{z\in\C : \Real(z) > 0\}$ since the bound \eqref{eq:B-bound} is uniform on any of compact subset belonging to it. In polar coordinates $v = \tau\phi$,
    \begin{align}\label{eq:B-new-coord}
		B(s,\theta;z)  = \int_{\tau=0}^1(1-\tau^2)^{z-1}\ \tau^{n-2}\int_{\partial\Omega\cap\theta^{\perp}}\hat\pi_{(s,\theta)}^*(\tilde h\chi)(\tau\phi) \ d\phi_{\theta} d\tau.
	\end{align}
The inner integral is \eqref{eq:fiber-integral} with $a = h\in C^{\infty}(\partial\Omega)$. Lemma \ref{IntegralFiberSphere} states \eqref{eq:fiber-integral} is smooth on $\overline{Z}$ and even with respect to $(s,\theta)\mapsto(-s,-\theta)$, properties $B(s,\theta;z)$ adopts for $z$ with $\Real(z) > 0$. 

\hspace{20pt} Set $B(s,\theta;z) = B_{(s,\theta)}(z)$. Smoothness and evenness of \eqref{eq:fiber-integral} in $\tau$ implies there exists $H_{(s,\theta)}\in C^{\infty}[0,1]$ satisfying 
    \begin{align}\label{eq:factored-fiber-integral}
        H_{(s,\theta)}(1-\tau^2) = \int_{\partial\Omega\cap\theta^{\perp}}\hat\pi_{(s,\theta)}^*(\tilde h\chi)(\tau\phi) d\phi_{\theta},
    \end{align}
which remains smooth in $(s,\theta)$ and fixed by $(s,\theta)\mapsto(-s,-\theta)$. Let $n$ be even. Substitute \eqref{eq:factored-fiber-integral} into \eqref{eq:B-new-coord} and set $\tau = 1-2t$ to find
	\begin{align}\label{eq:B-new-coord-2}
		B_{(s,\theta)}(z) &= 4^{z-1}\ 2\int_{0}^{1/2}H_{(s,\theta)}\big(4(1-t)t\big)\ (1-t)^{z-1}t^{z-1}(1-2t)^{n-2}\ dt.
	\end{align}
By the evenness of $n$, $n-2$ is even and the integrand of \eqref{eq:B-new-coord-2} is fixed by $t\mapsto(1-t)$ while carrying $[0,\frac{1}{2}]$ to $[\frac{1}{2},1]$. We may replace the bounds with those of the invariant interval $[0,1] = [0,\frac{1}{2}]\cup[\frac{1}{2},1]$ of the same transformation at the cost of $\frac{1}{2}$. Again by the evenness of $n$, $\frac{n-2}{2}$ is integral and we may expand $(1-2t)^{n-2} = (1-2t)^{2(\frac{n-2}{2})}$ according to \eqref{interval-identity} to find 
	\begin{align}\label{eq:B-even-linear-sum}
		B_{(s,\theta)}(z) &= \sum_{q=0}^{\frac{n-2}{2}}{\frac{n-2}{2}\choose q}(-1)^q4^{z-1 + q}\ \beta\big[H_{(s,\theta)}\big(4(1-\cdot)\cdot\big)\big](z+q).
	\end{align}
The cited \cite[Proposition 5.1]{Mazzeo2021} above describes the meromorphic extension of $\beta\big[H_{(s,\theta)}\big(4(1-\cdot)\cdot\big)\big](w)$ for each $q$ to $\C$ as having at most simple poles at $w\in-\N_0$ (or $z\in-q-\N_0$) and the functional relation implies each term of \eqref{eq:B-even-linear-sum} adopts the smoothness on $\overline{Z}$ and symmetry under $(s,\theta)\mapsto(-s,-\theta)$. The poles of $B_{(s,\theta)}(z)$ are contained in the union of the support of the poles of each term of \eqref{eq:B-even-linear-sum}, $-\N_0$.

\hspace{20pt} Now let $n$ be odd, then $n-3$ is even and we expand $(2t-1)^{2(\frac{n-3}{2})} = (1-2t)^{2(\frac{n-3}{2})}$ using \eqref{interval-identity} and insert it into formula \eqref{eq:B-new-coord-2} to find
	\begin{align}\label{eq:B-new-coord-3}
		B_{(s,\theta)}(z) &= 2\sum_{q=0}^{\frac{n-3}{2}}{\frac{n-3}{2}\choose q}(-1)^q\int_{0}^{1/2}H_{(s,\theta)}\big(4(1-t)t\big)\big(4(1-t)t\big)^{z-1 + q}(1-2t)dt.
	\end{align}	
Set $\xi = 4(1-t)t$ in \eqref{eq:B-new-coord-3} to find 
	\begin{align}\label{eq:B-linear-sum}
		B_{(s,\theta)}(z) = \frac{1}{2}\sum_{q=0}^{\frac{n-3}{2}}{\frac{n-3}{2}\choose q}(-1)^q\ M\big[H_{(s,\theta)}\big](z+q),
	\end{align}	
where $M[\cdot]$ is the Mellin functional on $[0,1]$. By Proposition \ref{prop-mellin}, for each $q$, the meromorphic extension of $M\big[H_{(s,\theta)}\big](w)$ to $\C$ has at most simple poles at $w\in-\N_0$ (or $z\in-q-\N_0$). By the same Proposition, the Taylor coefficients of $H_{(s,\theta)}(\xi)$ at $\xi=0$ being smooth on $\overline{Z}$ and even with respect to $(s,\theta)\mapsto(-s,-\theta)$ ensures the extension of $M\big[H_{(s,\theta)}\big](w)$ adopts the same properties whenever defined. As before, the poles of $B_{(s,\theta)}(z)$ are contained in the union of the support of the poles of each term of \eqref{eq:B-linear-sum}, $-\N_0$.
\end{proof}

\hspace{20pt} Proposition \ref{BProperties} shows the extent of the pole structure of the meromorphic family $B(s,\theta;z)$. To understand how the pole structure of $B$ influences the meromorphic structure $\langle h,(R^*g)_M\rangle_{\partial\Omega}$ we must find the phg asymptotic expansion of $B$ near $\sigma=0$.

\begin{lem}\label{BDensityExpansion}
Following the hypothesis of Proposition \ref{MellinPairing}, the $b$-density $B(s,\theta;z)\frac{2d\theta ds}{\sigma}$ has, away from $z\in-\N_0$, polyhomogeneous expansion near $\sigma=0$,
	\begin{align}\label{eq:B-measure-expansion}
		B(s,\theta;z)\frac{2d\theta ds}{\sigma}\sim\sum_{m = 0}^{\infty}\sigma^m\ \frac{s}{|s|}\ \textstyle B_m(\frac{s}{|s|}\theta;z)\displaystyle\frac{d\theta d\sigma}{\sigma},
	\end{align}
where $B_m$ are meromorphic for each $\theta$ with at most simple poles at
	\begin{align}\label{eq:poles-B}
		\Lambda_{n,m} = \begin{cases}- \N_0, & n\text{ even,}\\ \{0,-1,...,-m - \frac{n-3}{2}\}, & n\text{ odd,}\end{cases}
	\end{align}
and smooth on $S^{n-1}$ whenever $z\notin\Lambda_{n,m}$. If $n$ is even, $B_m$ has simple zeros at $z\in-\frac{1}{2}- \frac{n-2}{2} - m-\N_0$. The most singular term of \eqref{eq:B-measure-expansion} is 
    \begin{align}\label{eq:B_0-most-sing}
        B_0(\textstyle\frac{s}{|s|}\theta;z) = h(\frac{s}{|s|}\theta)\displaystyle\ \frac{\omega_{n-2}}{2}\ \Gamma\Big(\frac{n-1}{2}\Big)\ \begin{cases}
            \displaystyle\frac{\Gamma(z)}{\Gamma(z + \frac{n-1}{2})}, & n\text{ even},\\
            \displaystyle\prod_{k=0}^{\frac{n-3}{2}}\frac{1}{z+k}, & n\text{ odd},
        \end{cases}
    \end{align}
where $\omega_{n-2}$ is the standard volume of the unit sphere $S^{n-2}$. 
\end{lem}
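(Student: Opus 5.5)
Fix $(s,\theta)$ with $s>0$ (the case $s<0$ follows from $B(s,\theta;z)=B(-s,-\theta;z)$ in Proposition \ref{BProperties}). The plan is to take the $\sigma$-expansion of the inner fiber integral from Lemma \ref{IntegralFiberSphere} and integrate it term by term in $\tau$. Applying the lemma with $a=h$ gives, near $\sigma=0$,
\begin{align}
H_{(s,\theta)}(1-\tau^2)\sim\sum_{m\geq0}\sigma^m\sum_{p=0}^m h_p(\theta)\binom{m-1}{p-1}\tau^{2p},
\end{align}
with $h_0=\omega_{n-2}\,h$, since $F(0;\theta)=\omega_{n-2}h(\theta)$. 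The cutoff factor $\chi((1-\tau^2)\sigma)$ equals $1$ to infinite order at $\sigma=0$. Inserting this into \eqref{eq:B-new-coord}, for $\Real(z)>0$ we get
\begin{align}
B(s,\theta;z)\sim\frac12\sum_{m\geq0}\sigma^m\sum_{p=0}^m h_p(\theta)\binom{m-1}{p-1}B\Big(z,\frac{n-1}{2}+p\Big).
\end{align}
This is justified exactly as in the proof of Theorem \ref{RadonExpansion}, using uniformity of the remainder on compact subsets of $\Real(z)>0$. The density $ds$ converts to $d\sigma$ via $ds=-d\sigma/(2s)$, and $1/s=(1-\sigma)^{-1/2}$ expands smoothly in $\sigma$. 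After absorbing this into the coefficients and tracking the sign with $\frac{s}{|s|}$, the $m=0$ term is
\begin{align}
B_0=\frac{\omega_{n-2}}{2}\,h\,\frac{\Gamma(z)\Gamma(\frac{n-1}{2})}{\Gamma(z+\frac{n-1}{2})}.
\end{align}
This matches \eqref{eq:B_0-most-sing} for $n$ even. For $n$ odd the same expression becomes the product $\Gamma(\frac{n-1}{2})\prod_{k=0}^{(n-3)/2}(z+k)^{-1}$, because $\Gamma(z)/\Gamma(z+N)=\prod_{k<N}(z+k)^{-1}$ when $N=\frac{n-1}{2}\in\N$.

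Next, extend meromorphically. Each coefficient $B_m$ is a finite linear combination of the functions $\Gamma(z)\Gamma(\frac{n-1}{2}+p)/\Gamma(z+\frac{n-1}{2}+p)$ for $p\le m$. Their continuation is explicit, and uniqueness of continuation identifies them with the coefficients of the expansion of the continued $B$ from Proposition \ref{BProperties}. The continued remainders are controlled through the representations \eqref{eq:B-even-linear-sum} and \eqref{eq:B-linear-sum}, whose functional relations preserve polyhomogeneity. For $n$ odd, $\frac{n-1}{2}+p\in\N$, so each term is the rational function $\prod_{k=0}^{\frac{n-3}{2}+p}(z+k)^{-1}$. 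The poles of $B_m$ are therefore simple and lie in $\{0,\dots,-m-\frac{n-3}{2}\}$, which is $\Lambda_{n,m}$. For $n$ even, $\Gamma(z)$ gives simple poles at $-\N_0$. The common factor is $\Gamma(z)/\Gamma(z+\frac{n-1}{2}+m)$; writing $1/\Gamma(z+\frac{n-1}{2}+p)=(z+\frac{n-1}{2}+p)_{m-p}/\Gamma(z+\frac{n-1}{2}+m)$ shows this factor divides every term. Its denominator vanishes simply at $z\in-\frac{n-1}{2}-m-\N_0=-\frac12-\frac{n-2}{2}-m-\N_0$, and these points are disjoint from $-\N_0$. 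So these are simple zeros of $B_m$, provided the polynomial cofactor $\sum_p(\cdot)(z+\frac{n-1}{2}+p)_{m-p}$ does not vanish there. Smoothness in $\theta$ off $\Lambda_{n,m}$ is inherited from $h_p\in C^\infty(S^{n-1})$.

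The main obstacle is this last point, the claim that the zeros are genuinely simple. The cofactor could vanish or cancel at some of these points, and the $p$-sums mix with the $(1-\sigma)^{-1/2}$ factors. I would prove only that $B_m$ vanishes there to order at least one, which is what the later pole-cancellation arguments need. For $m=0$ I would check exact simplicity directly from the formula for $B_0$. A second, more routine obstacle is justifying that the term-by-term expansion remains valid after meromorphic continuation, uniformly in $z$ on compact sets away from the poles.
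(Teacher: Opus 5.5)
Your proposal is correct and follows the paper's proof essentially step for step. You insert the $\sigma$-expansion of the fiber integral from Lemma~\ref{IntegralFiberSphere} into \eqref{eq:B-new-coord}, evaluate each $\tau$-integral as $\frac12 B(z,\frac{n-1}{2}+p)$, read off poles and zeros from $\Gamma(z)/\Gamma(z+\frac{n-1}{2}+p)$, and convolve with the $\sigma$-expansion of $2\,ds$. Your choice to assert only vanishing to order at least one on $-\frac{n-1}{2}-m-\N_0$ for $m\geq1$ (with exact simplicity checked for $m=0$) is exactly what the paper's ``simultaneous zeros'' argument establishes. That caution is justified, since the polynomial cofactor can vanish there for suitable $h$ and $\theta$. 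Likewise, your remark about continuing the expansion past $\Real(z)>0$ addresses a point the paper passes over without comment.
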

\begin{proof}
In polar coordinates $v = \tau\phi$, \eqref{B-integral-formula} becomes \eqref{eq:B-new-coord}. The inner integral is \eqref{eq:fiber-integral} with $h\in C^{\infty}(\partial\Omega)$ playing the role of $a$ and is the subject of Lemma \ref{IntegralFiberSphere}. Fix a region $\pm s > 0$. We substitute expansion \eqref{eq:fiber-integral-expansion} to \eqref{eq:B-new-coord} and exchange the order of summation and integration to  find 
    \begin{align}\label{eq:B-expansion}
		B\sim\sum_{m = 0}^{\infty}\sigma^m\ \sum_{p=0}^mH_p(\textstyle\frac{s}{|s|}\displaystyle\theta){m-1\choose p-1}\int_0^1(1-\tau^2)^{z-1}\ \tau^{n-2 + 2p}d\tau,
	\end{align}
where $H_p$ are used instead of $A_p$. For each $p\in\N_0$, setting $u = \tau^2$ in the integral in \eqref{eq:B-expansion} reveals a Beta function \eqref{eq:beta}
    \begin{align}\label{eq:beta-func-sum-1.1}
        \frac{1}{2}\int_0^1(1-u)^{z-1}\ u^{\frac{n-3}{2} + p}du = \frac{1}{2} B\Big(z,\frac{n-1}{2} + p\Big) = \frac{\Gamma(\frac{n-1}{2} + p)}{2}\ \frac{\Gamma(z)}{\Gamma(z + \frac{n-1}{2} + p)}.
    \end{align}
The $\sigma^m$ coefficient of \eqref{eq:B-expansion} are determined by the poles of the product $\Gamma(z)\ \Gamma(z + \frac{n-1}{2}+p)^{-1}$. For any $n$, the numerator $\Gamma(z)$ has no zeros and only has poles and at $z\in-\N_0$ while $\Gamma(z + \frac{n-1}{2} + p)^{-1}$ has no poles and only simple zeros at $z\in-p-\frac{n-1}{2}-\N_0$. If $n$ is odd, $\frac{n-1}{2}$ is integer and every simple zero at $-p-\frac{n-1}{2}-\N_0$ meets a simple pole from $-\N_0$, leaving only simple poles at $z\in\{0,-1,...,-p-\frac{n-3}{2}\}$. When $n$ is even, every $z\in-\N_0$ is a simple pole while every $z\in-p-\frac{n-1}{2}-\N_0$ is a simple zero of the coefficient. 

\hspace{20pt} We now expand the measure $2ds$ as the power series near $\sigma = 0$. In either region $\pm s > 0$ near $s^2=1$ or $\sigma=0$, $\sigma = 1-s^2$ becomes $s = \frac{s}{|s|}\sqrt{1-\sigma}$ whose Taylor series brings 
    \begin{align}\label{eq:measure-expansion}
        2ds = \textstyle\frac{s}{|s|}(1-\sigma)^{-\frac{1}{2}}d\sigma=\displaystyle\frac{s}{|s|}\sum_{j = 0}^{\infty}\sigma^j\ 4^{-j}{2j-1\choose j-1}d\sigma.
    \end{align}
Substitute both \eqref{eq:measure-expansion} and \eqref{eq:B-expansion}, using $C_m(\frac{s}{|s|}\theta;z)$ as the coefficient of $\sigma^m$ in \eqref{eq:B-expansion}, and convolve the coefficients to find  
	\begin{align}\label{eq:B-coefficientss-defn}
		B\frac{2d\theta ds}{\sigma}&\sim\sum_{m = 0}^{\infty}\sigma^m\frac{s}{|s|}\underbrace{\sum^m_{j=0}4^{-(m-j)}{2(m-j)-1\choose m-j-1} C_j(\textstyle\frac{s}{|s|}\theta;z)}_{B_m(\frac{s}{|s|}\theta;z)}\displaystyle\frac{d\theta d\sigma}{\sigma}.
	\end{align}
Fix $m\in\N_0$. The coefficient $B_m(\theta;z)$ has at most the poles of $C_0$,...,$C_m$ and depends on the parity of $n$ and given by \eqref{eq:poles-B}. The coefficient $B_m$ is smooth in $\theta$ away from these poles as each $C_j$ is smooth in $\theta$ away from these poles. The coefficient $B_m$ has at least the zeros whenever $C_0,...,C_m$ has simultaneous zeros which if $n$ is odd doesn't occur but occur as simple zeros at $-m-\frac{n-1}{2}-\N_0$ when $n$ is even. Formulas \eqref{eq:B_0-most-sing} emerge from \eqref{eq:B-coefficientss-defn} and \eqref{eq:beta-func-sum-1.1} and \eqref{eq:B-expansion} after setting all indices $m = j = p = 0$. In particular, the zeroth coefficient of \eqref{eq:fiber-integral-expansion} is $H_0(\frac{s}{|s|}\theta) = h(\frac{s}{|s|}\theta)\ \omega_{n-2}$.
\end{proof}

\begin{lem}\label{PairingPoleStructure}
Following the hypothesis of Proposition \ref{MellinPairing}, for each $m\in\N_0$ there exists meromorphic $A_{1,m}(z)$ and $A_{2,m}(z)$ and a function $\tilde R_m(z)$ which is analytic in $\Real(z) > -\Real(\gamma) -\frac{n-1}{2} - m$ away from $-\N_0$ and having poles at $-\N_0$ such that
	\begin{align}\label{eq:pairing-expansion}
		\langle h,\mu_M\rangle_{\partial\Omega} = \sum_{q=0}^{m-1} A_{1,q}(z)\ A_{2,q}(z) + \tilde R_m(z).
	\end{align}
The function $A_{1,q}$ has a unique pole of order $(l+1)$ at $z = -(\frac{n-1}{2} + \gamma + q)$ and $A_{2,q}$ has at most simple poles at $z\in\Lambda_{n,q}$ defined by \eqref{eq:poles-B}. When $n$ is even, $A_{2,q}(z)$ has simple zeros at $z\in-\frac{1}{2}- \frac{n-2}{2} - q-\N_0$.
\end{lem}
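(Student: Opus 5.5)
We start from the formula for the pairing in Proposition \ref{MellinPairing}, valid for $\Real(z) > \max\{0,-\frac{n-1}{2}-\Real(\gamma)\}$. We rewrite the inner $\theta$-integral against the $b$-density $B\,\frac{2d\theta ds}{\sigma}$ and work separately near $s=1$ and $s=-1$. The cutoff $\chi(1\mp s)$ localizes the pairing to the collar where Lemma \ref{BDensityExpansion} applies. In this collar we use $\sigma$ as the variable, and the pairing becomes
\[
\int_0^{c}\sigma^{\frac{n-1}{2}+\gamma+z}\log(\sigma)^{\el}\chi\,\Big(\int_{S^{n-1}} a(\theta)\, B(s,\theta;z)\,d\theta\Big)\frac{d\sigma}{\sigma}\cdot(\text{Jacobian}),
\]
with the Jacobian already absorbed in \eqref{eq:B-measure-expansion}. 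Into this we substitute the expansion \eqref{eq:B-measure-expansion} truncated at order $m$:
\[
B\,\frac{2d\theta ds}{\sigma} = \sum_{q<m}\sigma^q\,\frac{s}{|s|}\,B_q\,\frac{d\theta d\sigma}{\sigma} + \sigma^m\,\mathcal{R}_m(s,\theta;z)\,\frac{d\theta d\sigma}{\sigma}.
\]

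For each $q<m$ the term factorizes. The $\theta$-integral
\[
A_{2,q}(z):=\pm\int_{S^{n-1}}a(\theta)\,B_q\big(\tfrac{s}{|s|}\theta;z\big)\,d\theta
\]
is independent of $\sigma$ and inherits from Lemma \ref{BDensityExpansion} its at most simple poles on $\Lambda_{n,q}$ and, for $n$ even, its zeros at $-\frac12-\frac{n-2}{2}-q-\N_0$. Here we use that these zeros come from the common factor $\Gamma(z+\frac{n-1}{2}+q)^{-1}$ and so survive integration against $a$. The contributions from $s\to\pm1$ are summed.

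The $\sigma$-integral $A_{1,q}(z)=\int_0^1\chi\,\sigma^{\frac{n-1}{2}+\gamma+q+z}\log(\sigma)^{\el}\frac{d\sigma}{\sigma}$ equals $(-1)^\el \el!\,(z+\frac{n-1}{2}+\gamma+q)^{-\el-1}$ plus an entire correction. The correction is entire because $1-\chi$ vanishes near $0$; it can be taken into $\tilde R_m$ or kept inside $A_{1,q}$, since the product still has the stated pole. Thus $A_{1,q}$ has a unique pole, of order $\el+1$, at $z=-(\frac{n-1}{2}+\gamma+q)$.

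The main obstacle is the remainder $\tilde R_m$. It consists of the integral of $\sigma^{\frac{n-1}{2}+\gamma+z+m}\log(\sigma)^\el\chi\,\mathcal{R}_m$ together with the parts of the pairing away from the collar. We need $\mathcal{R}_m$ to be bounded in $\sigma$ and meromorphic in $z$, with poles only in $-\N_0$, locally uniformly in $z$ away from those poles. We would obtain this by redoing the proof of Lemma \ref{BDensityExpansion} with Taylor remainders. Lemma \ref{IntegralFiberSphere} gives a smooth remainder $O(\sigma^m)$ for the fiber integral, uniformly in $\tau$. Inserting it into \eqref{eq:B-new-coord} yields a function smooth in $(s,\theta)$. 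Its meromorphic continuation in $z$, with at most simple poles at $-\N_0$, follows exactly as in Proposition \ref{BProperties}, via the beta or Mellin functional relations (Proposition 5.1 of \cite{Mazzeo2021} and Proposition \ref{prop-mellin}) applied with parameters. Given these bounds, the $\sigma$-integral converges absolutely and is holomorphic for $\Real(z)>-\Real(\gamma)-\frac{n-1}{2}-m$ away from $-\N_0$. This gives the claimed decomposition \eqref{eq:pairing-expansion}, first for large $\Real(z)$ and then by uniqueness of meromorphic continuation.
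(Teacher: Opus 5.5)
Your proposal is correct and follows the same route as the paper. You insert the truncated expansion of $B\,\frac{2d\theta ds}{\sigma}$ into the pairing formula of Proposition~\ref{MellinPairing}, factor each term as the $\sigma$-Mellin integral $A_{1,q}$ times the $\theta$-integral $A_{2,q}$ of $B_q$, and control $\tilde R_m$ through a bound $|R_m|\le C_K\sigma^m$ that is locally uniform in $z$ away from $-\N_0$.

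Your sketch of why that remainder bound survives meromorphic continuation is a useful addition, since the paper simply asserts it as equivalent to the expansion. It needs the fiber-integral remainder to be $O(\sigma^m)$ together with its $\tau$-derivatives, not just pointwise. This does hold, because near $\sigma=0$ that integral is a smooth function of $\theta$ and $\sigma\tau^2/(1-\sigma)$.
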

\begin{proof}
From formula \eqref{eq:pairing-prefinal}, the support $\supp(\chi(1\mp s))$ ensures we are in one of the two regions defined by $\pm s > 0$. In that region, expansion \eqref{eq:B-measure-expansion} of $B\frac{2d\theta ds}{\sigma}$ from Lemma \ref{BDensityExpansion} holds. The expansion is equivalent to the existence of $R_m(s,\theta;z)$, meromorphic in $z$ with poles at most at $-\N_0$, smooth on $\overline{Z}$ away from $z\in-\N_0$, and on locally compact subsets $K\subset\C\setminus-\N_0$, $|R_m|\leq C_K\sigma^{m}$, such that for each $m\in\N_0$
	\begin{align}\label{eq:B-expansion-finite}
		B(s,\theta;z)\frac{2d\theta ds}{\sigma} = \frac{s}{|s|}\sum_{q=0}^{m-1}\sigma^q\ B_q(\textstyle\frac{s}{|s|}\theta;z)\displaystyle\frac{d\theta d\sigma}{\sigma} + R_m(s,\theta;z)\frac{s}{|s|}\frac{d\theta d\sigma}{\sigma}.
	\end{align}
Submit the expansion \eqref{eq:B-expansion-finite} (the one appropriate for the region) into formula \eqref{eq:pairing-prefinal} and change the order of the sum and integration and separate to find 
	\begin{align}\label{eq:defn-A1-A2}
		\frac{1}{2}\sum_{q=0}^{m-1}\frac{s}{|s|}\underbrace{\int_{\sigma=0}^1\sigma^{\frac{n-1}{2} + z + \gamma + q}\log(\sigma)^{\el}\tilde\chi(\sigma)\frac{d\sigma}{\sigma}}_{A_{1,q}(z)}\ \underbrace{\int_{S^{n-1}}a(\theta)\ \textstyle B_q(\frac{s}{|s|}\theta;z)d\theta}_{A_{2,q}(z)} \ +\ \tilde R_m(z),
	\end{align}
where $\tilde\chi(\sigma) = \chi(1 - \sqrt{1-\sigma})$ remains a smooth cutoff in $\sigma$ under the coordinate change noting that $\tilde\chi\equiv0$ in a neighborhood of $\sigma=1$, and $\tilde R_m(z)$ is formula \eqref{eq:pairing-prefinal} with $B$ replaced by $R_m$.

\hspace{20pt} For each $q$, $A_{1,q}$ is a Mellin functional of a simple component $\sigma^w\log(\sigma)^{\el}$ and defines a meromorphic function with a unique pole at $z = -w$ of order $(\el+1)$, where $w = \frac{n-1}{2} + z + \gamma + q$ (see Appendix \ref{MellinBackground}). For each $q$, $A_{2,q}$ adopts the poles and zeros estimates of $B_q(\frac{s}{|s|}\theta;z)$ (described in Lemma \ref{BDensityExpansion}) as $a(\theta)B_q(\frac{s}{|s|}\theta;z)$ is smooth in $\theta$ and $S^{n-1}$ is compact. The remainder $\tilde R_m(z)$ is  
	\begin{align}\label{eq:remainder}
		\tilde R_m(z) = \frac{s}{|s|}\int_{\sigma=0}^1\sigma^{\frac{n-1}{2} + z + \gamma}\log(\sigma)^{\el}\tilde\chi(\sigma)\int_{S^{n-1}}a(\theta)\ R_m(s,\theta;z)d\theta\frac{d\sigma}{\sigma}.
	\end{align}
The inner integral defines a smooth function of $s$ in the region $\pm s > 0$ and adopts the meromorphic structure of $R_m$ since $a(\theta)R_m(s,\theta;z)$ are smooth and $S^{n-1}$ compact. To show the analyticity claim about $\tilde R_m$, let $K\subset\{z\in\C : \Real(z) > -\Real(\gamma) - \frac{n-1}{2} - t\}\setminus-\N_0$ be compact. On $K$, we have a conormal bound $|R_m|\leq C\sigma^m$ and 
	\begin{align}
		|\tilde R_m|\leq C\int_{\sigma=0}^1\sigma^{\frac{n-1}{2} + \Real(z + \gamma) + m}\log(\sigma)^{\el}\tilde\chi(\sigma)\frac{ds}{\sigma},
	\end{align}
where $C$ may be negative depending on the parity of $\el$, is an upper bound that is uniformly bounded on $K$ since $\frac{n-1}{2} + \Real(z+ \gamma) + m > 0$ and absolute convergence shows \eqref{eq:remainder} defines an analytic function on $K$.
\end{proof}

We finally present the proof of Theorem \ref{MainResult}. The proof follows the analysis of the kinds of interactions the poles and zeros of $A_{1,m}(z)$ and $A_{2,m}(z)$ from Lemma \ref{PairingPoleStructure} have and the consequences of those corresponding interactions on the index set side.

\begin{proof}[Proof of Theorem \ref{MainResult}]
The indices of $R^*u$ correspond to the poles and their orders of $\langle h,(R^*u)_M\rangle_{\partial\Omega}$. The phg function $R^*u$ supports an index $(\gamma,k)$ only if the meromorphic family $(R^*u\frac{dx}{\rho})_M$ supports an order $k+1$ pole at $z=-\gamma$, see subsection \ref{MellinBackground}. From Proposition \ref{PairingPoleStructure}, the pole structure of $\langle h,(R^*u)_M\rangle_{\partial\Omega}$ is known up to examination of products of meromorphic functions. In all cases, the estimate on the pole support is found by summing the estimated pole support of $A_{1,m}\cdot A_{2,m}$ for each $m$.

\hspace{20pt} We begin with case \textbf{d.} Assume for all $m\in\N_0$ that the estimated pole support of $A_{1,m}$ does not meet estimated zero support or pole support of $A_{2,m}$. This is the generic case when the extended union in Theorem \ref{thm:pushforward} is no larger than the usual union. We begin by taking $n\geq2$ to be \textbf{even}. $A_{1,m}$ has at most simple poles at $-\N_0$ while $A_{2,m}$ has a unique order $\el+1$ pole at $z = -(\frac{n-1}{2}+\gamma + m)$. Summing these poles over $m\in\N_0$ finds a pole estimate corresponding to the index set
	\begin{align}\label{eq:case-d-est}
		F_{(\gamma,\el)} = \textstyle\overline{\big\{(\frac{n-1}{2} + \gamma,\el)\big\}}\;\cup\;\overline{\big\{(0,0)\big\}}.
	\end{align}
For $n\geq2$ which is \textbf{odd}, for each $m\in\N_0$, $A_{2,m}$ has at most simple poles supported at $\{0,-1,...,-\frac{n-3}{2} - m\}$. Summing these, this pole support corresponds to the same upper estimate \eqref{eq:case-d-est}.

\hspace{20pt} \textbf{Case a.} Suppose for some $m$, $A_{1,m}$ supports a pole located on the support of a zero of $A_{2,m}$. This requires that $n$ be even as $A_{2,m}$ has no consistent vanishing when $n$ is odd. A pole at $z = -(\frac{n-1}{2} + \gamma + m)$ meets a zero of $A_{2,m}$ only if $(\frac{n-1}{2} + \gamma + m)\in-(\frac{1}{2} + \frac{n-2}{2} + m + \N_0)$ which occurs if and only if $\gamma\in\N_0$. In particular, overlap for some $m$ implies overlap for all $m$. Let $\gamma = q\in\N_0$. The product $A_{1,m}\cdot A_{2,m}$ will have an order $\el$ pole at $z = -(\frac{n-1}{2} + q + m)$ in addition to the simple poles supported on $-\N_0$ by $A_{2,m}$. Summing these estimates over $m\in\N_0$, the collection corresponds to the index set 
	\begin{align}
		F_{(\gamma,l)} = \textstyle\overline{\big\{(\frac{1}{2} + \frac{n-2}{2} + q, \el-1)\big\}}\cup\overline{\big\{(0,0)\big\}},
	\end{align}
where the initial part of the union is empty when $l = 0$.

\hspace{20pt} \textbf{Case b.} Suppose $n\geq2$ is \textbf{even} and for some $m$, $A_{2,m}$ and $A_{1,m}$ support a pole at the same location in $\C$. $A_{1,m}$ supports a pole at $z = -(\frac{n-1}{2} + \gamma + m)$ and meet the simple poles supported at $-\N_0$ if and only if $\gamma\in\frac{1}{2} - \frac{n}{2} - m +\N_0$. A weaker but not sufficient condition is $-(\frac{n-1}{2} + \gamma + m)\in\Z$ and is equivalent to $\gamma \in\frac{1}{2} + \Z$ since $\frac{n}{2} + m$ is integer. Write $\gamma = \frac{1}{2} + q$ for $q\in\Z$. Returning to the original condition, $\frac{n}{2} + q + m\in\N_0$ implies overlap at index $(\frac{n}{2}+q+m,\el)$. When $\frac{n}{2} + q < 0$, every nonnegative integer $k$ has a nonnegative solution $m$ to $k = \frac{n}{2} + q + m$ and overlap occurs every index in $\overline{\big\{(0,\el)\big\}}$ but not at the indices in $\big\{(\frac{n}{2} + q,\el),...,(-1,\el)\big\}$ and the index set is 
    \begin{align}
        F_{(\gamma,\el)} = \textstyle\overline{\big\{(\frac{n}{2} + q,\el),(0,\el+1)\big\}}, && \text{when} && \frac{n}{2} + q<0.
    \end{align}
When $\frac{n}{2} + q \geq0$, overlap occurs at indices in $\overline{\big\{(\frac{n}{2} + q,\el)\big\}}$ but not before, as nonnegative integers strictly smaller than $\frac{n}{2} + q$ cannot be expressed as the sum of $\frac{n}{2} + q$ and a nonnegative integer. Accounting for the indices associated to the simple poles of each $A_{2,m}$ at each $z\in-\N_0$, the corresponding index set in this case is
    \begin{align}
        F_{(\gamma,\el)} = \textstyle\overline{\big\{(\frac{n}{2} + q,\el+1)\big\}}\cup\overline{\big\{(0,0)\big\}}, && \text{when} && \frac{n}{2} + q\geq0.
    \end{align}

\hspace{20pt} \textbf{Case c.} Suppose $n\geq2$ is \textbf{odd} and for some $m$, both $A_{1,m}$ and $A_{2,m}$ support a pole at $z = -(\frac{n-1}{2} + \gamma + m)$. $A_{2,m}$ only supports poles at $\{0,-1,...,-\frac{n-3}{2}-m\}$. As $\frac{n-1}{2} + m$ is always integer, we can restrict our attention to $\gamma\in\Z$. Pole overlap requires 
	\begin{align}
		0\leq\frac{n-1}{2} + m + \gamma\leq\frac{n-3}{2} + m\iff -\frac{n-1}{2} - m\leq \gamma\leq-1
	\end{align}
indicating overlap occurs when $\gamma =q\in-\N$. For any such $q$, overlap occurs for sufficiently large $m$ when $-\frac{n-1}{2} - q\leq m$ and for $-\frac{n-1}{2} - q \leq 0$, overlap occurs for all $m$ and accounting for the poles of $A_{2,m}$ supported at $z\in-\N_0$ the corresponding index set is 
    \begin{align}
        F_{(\gamma,\el)} = \textstyle\overline{\big\{(\frac{n-1}{2} + q,\el+1)\big\}}\cup\overline{\big\{(0,0)\big\}}, && \text{when} && \frac{n-1}{2} + q\geq0.
    \end{align}
When $-\frac{n-1}{2} - q > 0$, overlap starts at the smallest $m$ which meets $-\frac{n-1}{2} - q\leq m$. For such $m$, $\frac{n-1}{2} + q + m = 0$ indicating that overlap occurs at all indices in $\overline{\big\{(0,\el)\big\}}$ as $m$ is succeeded. In this case, overlap will not occur at indices $\big\{(\frac{n-1}{2} + q,
\el),...,(-1,\el)\big\}$ and the index set estimate is 
    \begin{align}
        F_{(\gamma,\el)} = \textstyle\overline{\big\{(\frac{n-1}{2} + q,\el),(0,\el+1)\big\}}, && \text{when} && \frac{n-1}{2} + q < 0.
    \end{align}
since this is the most singular term of the Laurent expansion. 
\end{proof}

\begin{proof}[Proof of Corollary \ref{cor:lowerbound}]
We study the coefficient of the most singular term of the index set $\overline{\{(\frac{n-1}{2} +\gamma,p)\}}\subseteq F_{(\gamma,\el)}$, with $p$ depending on $(\gamma,\el)$. The phg expansion of $R^*u$ supports components at these indices if and only if the coefficient of the most singular term is non-vanishing. From the analysis of the Mellin functional of a simple component $u=\tilde a(x)\log(\rho)^p\rho^{\frac{n-1}{2}+\gamma}\ dx$ in Appendix \ref{MellinBackground}, the coefficient $\tilde a$ is related to the coefficient of the most singular term in the Laurent expansion of $u_M$ by 
    \begin{align}\label{eq:coefficient-extract}
        \tilde a(x)\ d\omega = \frac{2}{(-1)^{p}\ p!}\lim_{z\rightarrow-\frac{n-1}{2}-\gamma}\Big(z+\frac{n-1}{2}+\gamma\Big)^{p+1}\ \langle h,\mu_M\rangle(z),
    \end{align}
where $d\omega$ is the round metric on $\partial\Omega$. In each case, only the $q=0$ component of \eqref{eq:pairing-expansion} supports this pole. Choose $m=1$ in \eqref{eq:pairing-expansion} to write 
    \begin{align}\label{eq:coeff-expansion}
        \langle h,\mu_M\rangle_{\partial\Omega} = A_{1,0}(z)\ A_{2,0}(z) + \tilde R_1(z),
    \end{align}
where $\tilde R_1$ is analytic in a neighborhood of $z=-(\gamma+\frac{n-1}{2})$. From \eqref{eq:MellinGeneral} and \eqref{eq:defn-A1-A2} we can write 
    \begin{align}\label{eq:coeff-A1-A2}
        A_{1,0}(z) = \frac{(-1)^{\el} \el!}{(z + \frac{n-1}{2} + \gamma)^{\el+1}} + V(z), && A_{2,0}(z) = \int_{S^{n-1}}a(\textstyle\frac{s}{|s|}\theta) B_0(\theta;z)d\theta,
    \end{align}
after changing variable in $A_{2,0}$ to $\theta' = \frac{s}{|s|}\theta$, which is well-defined near $\sigma=0$. Combine \eqref{eq:coeff-A1-A2} and \eqref{eq:coeff-expansion} to write
    \begin{align}\label{eq:pair-isolated}
        \langle h,\mu_M\rangle_{\partial\Omega} = (-1)^{\el} \el!\frac{A_{2,0}(z)}{(z + \frac{n-1}{2} + \gamma)^{\el+1}} + \tilde V(z)
    \end{align}
for some $\tilde V$ which is bounded in a neighborhood of $z=-(\frac{n-1}{2} + \gamma)$. The coefficient $B_0$ is found by \eqref{eq:B_0-most-sing} to express
    \begin{align}\label{eq:coeff-A2-final}
        A_{2,0}(z) = \langle h,S_{\frac{s}{|s|}}a\rangle\ \displaystyle\frac{\omega_{n-2}}{2}\ \Gamma\Big(\frac{n-1}{2}\Big)\ \begin{cases}
            \displaystyle\frac{\Gamma(z)}{\Gamma(z + \frac{n-1}{2})}, & n\text{ even,}\\
            \displaystyle\prod_{k=0}^{\frac{n-3}{2}}\frac{1}{z+k}, & n\text{ odd.}
        \end{cases}
    \end{align}
Where $S_{\frac{s}{|s|}}$ is the operator on $C^{\infty}(S^{n-1})$ sending $a\in C^{\infty}(S^{n-1})$ to $S_{\frac{s}{|s|}}a(\theta) = a(\frac{s}{|s|}\theta)$. We follow the same order of casework.

\hspace{35pt}\textbf{Case d.} We assume $\frac{n-1}{2}+\gamma$ is not integer, or $n$ is even and $\gamma\notin\N_0$, or that $n=2$ is odd and $\gamma\notin-\N$. In this case $p=\el$ in \eqref{eq:coefficient-extract} and combined with \eqref{eq:pair-isolated} finds
    \begin{align}\label{eq:cased-coefficient-3}
       c_d= \frac{2}{(-1)^{\el} \el!}\lim_{z\rightarrow-(\gamma + \frac{n-1}{2})}\Big(z + \frac{n-1}{2} + \gamma\Big)^{\el+1}\ \langle h,\mu_M\rangle_{\partial\Omega} = 2\ A_{2,0}\bigg(-\gamma-\frac{n-1}{2}\bigg).
    \end{align}
Evaluating \eqref{eq:coeff-A2-final} at $z = -(\frac{n-1}{2} + \gamma)$ finds 
    \begin{align}\label{eq:coeff-case-d}
        c_d = \langle h,S_{\frac{s}{|s|}}a\rangle\ \displaystyle\omega_{n-2}\ \Gamma\Big(\frac{n-1}{2}\Big)\ \begin{cases}
            \displaystyle\frac{\Gamma(-\gamma-\frac{n-1}{2})}{\Gamma(-\gamma)}, & n\text{ even,}\\
            \displaystyle\prod_{k=0}^{\frac{n-3}{2}}\frac{1}{-\frac{n-1}{2}-\gamma+k}, & n\text{ odd.}
        \end{cases}
    \end{align}
When $n$ is even, $\gamma\notin\N_0$ implies $0<|\Gamma(-\gamma)|<\infty$ and $\gamma + \frac{n-1}{2}$ being non-integer implies $0<|\Gamma(-\frac{n-1}{2}-\gamma)|<\infty$ showing $c_d\in\C\setminus\{0\}$ in this case. When $n$ is odd, $\gamma\notin-\N$ implies $|\gamma + \tilde k| > 0$ for each $\tilde k = \frac{n-1}{2}-k = 1,2,...,\frac{n-1}{2}$ showing that $c_d\in\C\setminus\{0\}$.

\hspace{35pt}\textbf{Case a.} We assume even $n$ and $\gamma\in\N_0$. In this pole annihilation case, $p=\el-1$ in \eqref{eq:coefficient-extract} and \eqref{eq:pair-isolated} finds
    \begin{align}\label{eq:coeff-case-a}
        c_a = \frac{2}{(-1)^{\el-1} (\el-1)!}\lim_{z\rightarrow-(\frac{n-1}{2} + \gamma)}\Big(z + \frac{n-1}{2} + \gamma\Big)^{\el}\ \langle h,\mu_M\rangle_{\partial\Omega} = -2\el\lim_{z\rightarrow-(\frac{n-1}{2} + \gamma)}\frac{A_{2,0}(z)}{(z + \frac{n-1}{2} + \gamma)}.
    \end{align}
From the even $n$ case of \eqref{eq:coeff-A2-final} combined with \eqref{eq:coeff-case-a} is
   \begin{align}\label{eq:coeff-case-a-2}
        c_a = -\el\ \langle h,S_{\frac{s}{|s|}}a\rangle\ \omega_{n-2}\ \Gamma\Big(\frac{n-1}{2}\Big)\ \Gamma\Big(-\frac{n-1}{2}-\gamma\Big)\ \displaystyle\lim_{z+\frac{n-1}{2}\rightarrow-\gamma}\frac{\Gamma(z+\frac{n-1}{2})^{-1}}{(z + \frac{n-1}{2} + \gamma)},
    \end{align}
where $0<|\Gamma(-\frac{n-1}{2}-\gamma)|<\infty$ since $\frac{n-1}{2}+\gamma$ is non-integer. The function $\Gamma(\xi)$ has a simple pole at $\xi = z+\frac{n-1}{2} = -\gamma\in-\N_0$ with residue $\Res(\Gamma;-\gamma) = \frac{(-1)^{\gamma}}{\gamma!}$. Inserting this residue into \eqref{eq:coeff-case-a-2} 
   \begin{align}\label{eq:coeff-case-a-2}
        c_a = \el\ \langle h,S_{\frac{s}{|s|}}a\rangle\ \omega_{n-2}\ \Gamma\Big(\frac{n-1}{2}\Big)\ \Gamma\Big(-\frac{n-1}{2}-\gamma\Big)\ (-1)^{\gamma+1}\gamma!
    \end{align}
This shows $c_a\in\C\setminus\{0\}$, when $\el > 0$.

\hspace{20pt} \textbf{Case b.} We take $n$ even and $\gamma\in\frac{1}{2}+\Z$. We have $p=\el+1$ if $\frac{n-1}{2} + \gamma\geq0$ in \eqref{eq:coefficient-extract} and $p=\el$ if $\frac{n-1}{2} + \gamma < 0$. With \eqref{eq:pair-isolated}, we find
    \begin{align}\label{eq:case-b-lim}
        c_b &= \lim_{z\rightarrow-(\gamma + \frac{n-1}{2})}\Big(z + \frac{n-1}{2} + \gamma\Big)^{p+1}\ \frac{2\langle h,\mu_M\rangle_{\partial\Omega}}{(-1)^{p} p!}\\
        &= \langle h,S_{\frac{s}{|s|}}a\rangle\ \omega_{n-2}\ \frac{\Gamma(\frac{n-1}{2})}{\Gamma(-\gamma)}\lim_{z\rightarrow-(\frac{n-1}{2} +\gamma)}\begin{cases}
            \Gamma(z), & \frac{n-1}{2} +\gamma < 0,\\
            \frac{(z + \frac{n-1}{2} +\gamma)\ \Gamma(z)}{-(\el+1)}, & \frac{n-1}{2} +\gamma\geq0,
        \end{cases}
    \end{align}
where $0<|\Gamma(-\gamma)|<\infty$ since $\gamma\in\frac{1}{2}+\Z$. When $\frac{n-1}{2} + \gamma$ is a nonnegative integer, $\Gamma(z)$ has residue $\Res(\Gamma;-\frac{n-1}{2} - \gamma) = \frac{(-1)^{\frac{n-1}{2} + \gamma}}{(\frac{n-1}{2} + \gamma)!}$ finding \eqref{eq:case-b-lim} to evaluate to 
    \begin{align}
        c_b &= \langle h,S_{\frac{s}{|s|}}a\rangle\ \omega_{n-2}\ \frac{\Gamma(\frac{n-1}{2})}{\Gamma(-\gamma)}\ \begin{cases}
            \Gamma(-\frac{n-1}{2} -\gamma), & \frac{n-1}{2} +\gamma < 0,\\
            \frac{(-1)^{\frac{n-1}{2} + \gamma}}{-(\frac{n-1}{2} + \gamma)!\ (\el+1)}, & \frac{n-1}{2} +\gamma\geq0.
        \end{cases}
    \end{align}

\textbf{Case c.} We take odd $n\geq2$ and $\gamma\in-\N$. Like in case b, $p=\el$ when $\frac{n-1}{2} + \gamma < 0$ or $p=\el+1$ when $\frac{n-1}{2} + \gamma\geq0$ respectively in \eqref{eq:coefficient-extract}. With \eqref{eq:pair-isolated}, find
    \begin{align}\label{eq:case-c-lim}
        c_c &= \lim_{z\rightarrow-(\gamma + \frac{n-1}{2})}\Big(z + \frac{n-1}{2} + \gamma\Big)^{p+1}\ \frac{2\langle h,\mu_M\rangle_{\partial\Omega}}{(-1)^{p} p!}\\
        &= \langle h,S_{\frac{s}{|s|}}a\rangle\ \omega_{n-2}\ \Gamma\Big(\frac{n-1}{2}\Big)\lim_{z\rightarrow-(\frac{n-1}{2} +\gamma)}\begin{cases}
            \prod_{k=0}^{\frac{n-3}{2}}\frac{1}{z+k}, & \frac{n-1}{2} +\gamma < 0,\\
            \frac{(z + \frac{n-1}{2} +\gamma)}{-(\el+1)}\prod_{k=0}^{\frac{n-3}{2}}\frac{1}{z+k}, & \frac{n-1}{2} +\gamma\geq0.
        \end{cases}
    \end{align}

When $\frac{n-1}{2} + \gamma\geq0$, $\frac{n-3}{2}\geq\frac{n-1}{2} + \gamma\geq0$ since $\gamma\in-\N$ and $(z +\frac{n-1}{2} + \gamma)$ does appear as a factor in the product. We can evaluate \eqref{eq:case-c-lim} to 
    \begin{align}
        c_c &= \langle h,S_{\frac{s}{|s|}}a\rangle\ \omega_{n-2}\ \Gamma\Big(\frac{n-1}{2}\Big)\begin{cases}
            \prod_{k=0}^{\frac{n-3}{2}}\frac{1}{-\frac{n-1}{2} -\gamma + k}, & \frac{n-1}{2} +\gamma < 0,\\
            \frac{-1}{\el+1}{\prod'}_{k=0}^{\frac{n-3}{2}}\frac{1}{-\frac{n-1}{2} -\gamma +k}, & \frac{n-1}{2} +\gamma\geq0,
        \end{cases}
    \end{align}

where $\prod'$ notates the product with only $\frac{n-3}{2}$-many factors and forgetting the factor corresponding to $z=-(\frac{n-1}{2}+\gamma)$.
\end{proof}

\newpage

\appendix

\section{Appendix: Background Material}

\subsection{Manifolds with Corners}\label{ManifoldWithCorners}

A manifold with corners $X$ (mwc) of dimension $d$, like $G$ (see Section \ref{SectionDevelopment}), is a Hausdorff, second-countable, topological space $X$ where each point $p\in X$ has a neighborhood homeomorphic to an open subset of $[0,\infty)^d$ for which the homeomorphisms define a smooth atlas. The \textit{codimension} of $p\in M$ is defined as the number of coordinates which vanish at $p$ in a coordinate neighborhood $U\subset[0,\infty)^d$ (identifying $p$ with its image under the homeomorphism). The coodinates are sometimes called \textit{adapted coordinates}. A manifold with boundary (mwb) is a manifold with corners for which $\coDim\leq1$. We will only be interested in the case when $\coDim\leq 2$. The interior is $X^{int} = \coDim^{-1}\{0\}$. Non-interior points form the manifold boundary $\partial X = \coDim^{-1}(\N)$ which is a $(d-1)$-dimensional manifold with corners. The boundary $\partial X$ is thought of as a disjoint union of open, positive codimensional `boundary subfaces' of $X$. 

\hspace{20pt} A \textit{boundary hypersurface} (bhs) of $X$ is the closure (in the topology of $X$) of a maximal, connected, open $(d-1)$-dimensional submanifold of $\partial X$ for which the closure is embedded in $X$ as a $(d-1)$-dimensional mwc. Each bhs $W$ is associated to a class of smooth \textit{boundary-defining function} (bdf) $\rho\in C^{\infty}\big(X,[0,\infty)\big)$ which satisfy 
    \begin{enumerate}
        \item $\rho^{-1}(0) = W$,
        \item $d\rho\neq0$ on $W$ and
        \item $d\rho_{\alpha}(x),d\rho_{\beta}(x)\in T^*_xX$ are linearly independent for each $x\in W_{\alpha}\cap W_{\beta}$.
    \end{enumerate}

Condition $2.$ ensures $W$ is embedded, while condition $3.$ ensures the bhs intersect transversely (condition $3.$ is not the correct condition in higher codimension). A function $\sigma\in C^{\infty}\big(X,[0,\infty)\big)$ is a \textit{local bdf} of $W$ if there is an open neighborhood $U\supset W$ such that $\sigma|_U\in C^{\infty}\big(U,[0,\infty)\big)$ is a bdf for $W$. On an open $U\subset[0,\infty)^d$, each coordinate vanishing on $U$ defines is a bdf of the corresponding bhs.

\hspace{20pt} The $2$-codimensional boundary \textit{corners} are the $(d-2)$-dimensional connected components of the intersection of any $2$ distinct bhs, and similarly for higher codimension (these intersections are always transverse by the generalization of condition 3. above). The collection of $k$-codimensional bhs is denoted $M_k(X)$ and $M(X):=\cup_{k=1}^{\infty} M_k(X)\cup\{X\}$ of all faces forms a poset under inclusion. When $X$ is at most $k$-codimemsional, $M_{k}(X)$ consists of closed submanifolds of $X$ and $M_{k+j}(X)=\emptyset$ for all $j\in\N$. The inclusion poset for $G$ is 
	\begin{figure}[h]\centering
		\begin{tikzcd}
				& G\arrow[d]\arrow[dr]\arrow[dl] & & \coDim=0,\\
				G|_{\partial Z_-}\arrow[d] &  S\overline{Z}\cap G\arrow[dl]\arrow[dr] & G|_{\partial Z_+}\arrow[d] &  \coDim=1,\\
			  	S\overline{Z}\cap G|_{\partial Z_-} & & S\overline{Z}\cap G|_{\partial Z_+} &  \coDim=2,
		\end{tikzcd}
	\end{figure}
    
where $\partial Z_{\pm} = \{\pm s\}\times S^{n-1}$ and $S\overline{Z}$ denotes the tangent sphere bundle over $\overline{Z}$ with the induced Riemannian structure from Euclidean $\R\times\R^n\supset Z$. For examples of every term mentioned, see Section \ref{SectionDevelopment}. For reference, see \cite{Melrose1992,Grieser2001,Mazzeo2021}.

\subsection{Maps between Manifolds with Corners}\label{bMaps}

A function $f:X\rightarrow Y$ between mwc $X$ and $Y$ is a \textit{$b$-map} if $f|_{X^{int}}$ is smooth and there exists an $M_1(X)\times M_1(Y)$ matrix of nonnegative integers $e_f(W,V)$ such that for each collection $(r_V)_{V\in M_1(Y)}$ of bdf on $Y$, there are smooth, positive $\{g_{V}\}_{V\in M_1(Y)}$ on $X$ and a factorization
	\begin{align}
		f^*(r_V) = g_V\ \prod_{W\in M_1(X)}\rho_W^{e_f(W,V)}\qquad\text{or}\qquad f^*(r_V)\equiv0,\label{b-factorization}
	\end{align}
where $(\rho_W)_{W\in M_1(X)}$ is the family of bdf on $X$. The possibility $f^*(r_V)\equiv0$ corresponds to $X\subseteq f^{-1}(V)$ while the possibility $f^*(r_V) = g_V > 0$ corresponds to $X\subseteq f^{-1}(V^c)$. In any other case, at least one $e_f(W,V) > 0$ and both $\cup_{e_f(W,V) > 0}W\subseteq f^{-1}(V) $ and $X^{int}\cup_{e_f(W,V) = 0}W \subseteq f^{-1}(V^c)$. 

\hspace{20pt} A $b$-map $f$ such that for some $V\in M_1(Y)$, $f^*(r_V)\equiv0$ then $f$ is said to be \textit{non-interior} and is otherwise said to be \textit{interior}. When a $b$-map $f$ is interior, $e_f$ are called the \textit{geometric exponents} of $f$ and are invariant under change in adapted coordinates (compare equation \eqref{eq:smooth-invariance} in Appendix \ref{AlgebraOnManifold}). We now assume mwb is compact for the following definitions. A $b$-map $f:X\rightarrow Y$ is \textit{$b$-normal} if for all boundary faces $K\in M(X)$
	\begin{align}
		\coDim\big(f(K)^{int}\big)\leq\coDim(K^{int}).\label{b-normality}
	\end{align}
A $b$-normal map $f$ and is a \textit{$b$-fibration} if in addition, for all boundary faces $K\in M(X)$
	\begin{align}
		f|_{K^{int}}:K^{int}\rightarrow f(K)^{int}
	\end{align} 
is a fibration. In particular, the restriction is onto. For reference see \cite{Melrose1992,Grieser2001,Mazzeo2021}.

\subsection{Conormal and Polyhomogeneous Function Spaces}\label{AlgebraOnManifold}

For $G$, define the \textit{face function} $\Fa:G\rightarrow M(G)$ which assigns $p\in G$ to the unique face $F\in M(G)$ with $p\in F^{int}$ (interior taken in the subspace topology of $G$). A \textit{$b$-tangent vector field} on $G$ is a smooth vector field $V$ that is tangent to each embedded face i.e. $V_p\in T_p\Fa(p)$ for each $p\in G$. In coordinates on $U\subset G$, a smooth vector field $V$ is a $b$-vector field if for each $p\in G$, the pushforward of $V|_U$ lies in 
	\begin{align}\label{eq:b-vf}
		\Span_{C^{\infty}(U)}\bigg\langle x^1\, \frac{\partial}{\partial x^1},\frac{\partial}{\partial y^1},...,\frac{\partial}{\partial y^{n-1}}\bigg\rangle && \text{or} && \Span_{C^{\infty}(U)}\bigg\langle x^1\,\frac{\partial}{\partial x^1},x^2\,\frac{\partial}{\partial x^2},\frac{\partial}{\partial y^1},\ldots,\frac{\partial}{\partial y^{n-2}}\bigg\rangle,
	\end{align} 
if $\coDim(p) = 1$ or if $\coDim(p)=2$, respectively, when $G$ is at most $2$-codimensional. The $b$-vector fields form a Lie subalgebra $\mathcal{V}_b(G)\subset\Gamma(G)$ under the usual bracket and can be realized as the smooth sections of the \textit{$b$-vector bundle} $\prescript{b}{} TG$ which is not constant rank, see \cite{Melrose1992}.

\hspace{20pt} By a \textit{conormal} function we mean $u\in C^{\infty}(G^{int})$, and at least one associated conormal multi-weight $\mathfrak{t}:M_1(X)\rightarrow\R$, such that any derivation of $u$ under $b$-vector fields is locally uniformly $\MO(\rho^{\mathfrak{t}})$ where $\rho^{\mathfrak{t}} = \prod_{\alpha}\rho_{\alpha}^{t_{\alpha}}$. The \textit{conormal class} of functions with multi-weight $\mathfrak{t}$ is denoted
    \begin{align}
        \A^{\mathfrak{t}}(G) = \big\{u\in C^{\infty}(G^{int}) :\forall\{V_1,...,V_k\}\subset\mathcal{V}_b,  V_1\cdots V_ku = \mathcal{O}(\rho^{\mathfrak t})\big\},
    \end{align}
where $\mathcal{O}$ holds locally uniformly on $G$. The set $\A(G)$ denotes the union of all $\A^{\mathfrak{t}}(G)$. We work with a conormal subclass which is more amenable to analysis. First, define an \textit{index set} to be a discrete $E\subset\C\times\N_0$ satisfying 
	\begin{enumerate}[label = \arabic*., start=1]
		\item\label{indexset1} For each $t\in\R$, 
        the set $E_{< t} := \{(\gamma,l)\in E : \Real(\gamma)< t\}$ is finite
		\item\label{indexset2} For each $(\gamma,\el)\in E$ and $k\in\{0,1,...,\el\}$, $(\gamma,k)\in E$
		\item\label{indexset3} For each $(\gamma,\el)\in E$ and $n\in\N_0$, $(\gamma + n,\el)\in E$. 
	\end{enumerate}

A function $u\in\A(G^{int})$ is \textit{polyhomogeneous conormal} (phg) with respect to $W$ if there is at least one index set $E$, such that there are associated coefficient functions $(u_{\gamma,k})_{(\gamma,k)\in E}\subset C^{\infty}(W^{int})$ which for each $t\in\R$
	\begin{align}\label{asymptoticequivalence}
		u - \sum_{(\gamma,k)\in E_{<t}}u_{\gamma,l}(y)\ \rho^{\gamma} \log(\rho)^k\in\A^t\big([0,\epsilon)_{\rho}\times W^{int}_y\big),
	\end{align}
where we've identified a tubular neighborhood of $W^{int}$ in $G$ with the set $[0,\epsilon)\times W^{int}$ and $\rho$ is the normal coordinate and is extendable to a local bdf of $W$. Terms in the sum in \eqref{asymptoticequivalence} are called \textit{simple phg components}. For any cutoff $\chi\in C^{\infty}_c[0,\infty)$ with $\chi\equiv1$ near $0$, then $u\chi(\rho)$ will retain phg expansion \eqref{asymptoticequivalence} near $W^{int}$\footnote{$u$ and $u\chi$ share the same germ of smooth functions at $W^{int}$.}. Condition \ref{indexset1} ensures the sum in \eqref{asymptoticequivalence} is finite for every $t$. We write $u\sim\sum_{(\gamma,k)\in E}u_{\gamma,l}\ \rho^{\gamma} \log(\rho)^k$ as a shorthand for \eqref{asymptoticequivalence}.

\hspace{20pt} We say a function $u\in\A(G)$ is \textit{polyhomogeneous} on $G$ if there is at least one \textit{index family} $\E=(E_W)_{W\in M_1(G)}$ such that $u$ is polyhomogeneous with respect to $W$ with index set $E_W$ and the coefficients meet satisfy the following matching condition whenever $H\in M_1(G)\setminus\{W\}$ forms a $2$-codimensional face with $W$. Let $E,F\in\E$ denote the index sets associated to $W$ and $H$ respectively. For each $(\gamma,k)\in E$, each coefficient $u_{\gamma,k}\in C^{\infty}(W^{int})$ must be phg with respect to each connected component of $H\cap W$ with index set $F$ and similarly for each $(\eta,\el)\in F$, the coefficient $u_{\eta,\el}$ is phg with respect to each connected component of $H\cap W$ with index set $E$ and for each $(u_{\gamma,k})_{\eta,\el} = (u_{\eta,\el})_{\gamma,k}$, independent of the order of expansion. For example, if $u:[0,\infty)_{(x,y)}^2\rightarrow\C$ is a smooth function all the way to the boundary, then there are smooth $u_{x^k}(0,-):[0,\infty)_y\rightarrow\C$ and $u_{y^j}(-,0):[0,\infty)_x\rightarrow\C$ such that near either bhs, the $x$- or $y$-axis,
    \begin{align}
        u\sim\sum_{k=0}^{\infty}\frac{u_{x^k}(0,y)}{k!}x^k,\quad x\rightarrow0^+, && u\sim\sum_{j=0}^{\infty}\frac{u_{y^j}(x,0)}{j!}y^j,\quad y\rightarrow0^+
    \end{align}
in the sense of \eqref{asymptoticequivalence} by Taylor's Theorem and by expanding the coefficients as $xy\rightarrow0^+$, we have near the only $2$-codimenssional point $(0,0)$ the expansion
    \begin{align}
        u\sim\sum_{j=0}^{\infty}\sum_{k=0}^j\frac{{u_{y^{j-k}}}_{x^k}(0,0)}{(j-k)!k!}y^{j-k}x^k\sim\sum_{k=0}^{\infty}\sum_{j=0}^k\frac{{u_{x^{k-j}}}_{y^j}(0,0)}{(k-j)!j!}x^{k-j}y^j,\quad xy\rightarrow0^+.
    \end{align}
The matching condition is equivalent to the equality of mixed partial derivatives ${u_{y^j}}_{x^k}(0,0) = {u_{x^k}}_{y^j}(0,0)$. The space of phg $u$ with index family $\E = (E_{\alpha})_{W_{\alpha}\in M_1(G)}$ is denoted $\A_{\text{phg}}^{\E}(G)$ and we denote the union of these function spaces over every index family by $\A_{\text{phg}}(G)$.

\hspace{20pt} Conditions \eqref{indexset2} and \eqref{indexset3} for index sets arise from considering phg expansions under change of coordinates. If both $\rho$ and $\rho'$ are bdfs of the same $W\in M_1(G)$, then $\rho'/\rho:G\setminus W\rightarrow(0,\infty)$ extends to a positive, smooth function on $G$. For $(\gamma,\el)\in\C\times\N_0$
	\begin{align}\label{eq:smooth-invariance}
		\rho^{\gamma}\log(\rho)^{\el} = \Big(\rho'\frac{\rho}{\rho'}\Big)^{\gamma}\log\Big(\rho'\frac{\rho}{\rho'}\Big)^{\el} = \sum_{k=0}^{\el} A_{\gamma,k}(\rho')\ (\rho')^{\gamma}\log(\rho')^k,
	\end{align}
where $A_{\gamma,k}(\rho') = (\frac{\rho}{\rho'})^{\gamma}{\el\choose k}\log(\rho/\rho')^k\in C^{\infty}(G)$. By expanding $A_{\gamma,k}(\rho')$ as a Taylor expansion about $\rho'=0$, \eqref{eq:smooth-invariance}, both expressions are seen to have the same index set $\overline{\big\{(\gamma,\el)\big\}}$.

\hspace{20pt} Define the \textit{most singular index} of an index set $E$ as the real number $\inf(E) = \min\{\Real(\gamma) : (\gamma,\el)\in E\}$. The number $\inf(E)$ provides conormal estimates i.e. $u\in\A^{E}_{\text{phg}}$ implies $u\in\A^{\inf(E)-\epsilon}$ for each $\epsilon > 0$. For reference see \cite{Melrose1992,Grieser2001,Mazzeo2021}.

\subsection{The Pullback and Pushforward Theorems}\label{PullbackPushforwardTheorems}

Each $b$-maps $f:X\rightarrow Y$ induces a linear maps $f^*:\A_{\text{phg}}(Y)\rightarrow\A_{\text{phg}}(X)$-spaces by pullback. The Pullback theorem \cite{Melrose1992} estimates the index set of $f^*u$ from the index set of $u$ and the geometric exponents of $f$. Let $\tilde e_f$ denote the matrix 

\begin{thmlit}[Pullback Theorem]\label{thm:pullback}
Let $X$ and $Y$ be manifolds with corners and $f:X\rightarrow Y$ a $b$-map with geometric exponents $e_f$. For each index family $\F$ on $Y$ define the index family $f^*\F$ on $X$ for $H\in M_1(X)$ by
	\begin{align}
		f^*\F(H) = \overline{\big\{(0,0)\big\}} + \sum_{\substack{K\in M_1(Y),\\ \ e_f(H,K) > 0}}\begin{pmatrix}e_f(H,K) & 0\\ 0 & 1\end{pmatrix}\cdot \F(K),
	\end{align}
where $\cdot$ is matrix multiplication, then $f^*:\A^{\F}_{\text{phg}}(Y)\rightarrow\A^{f^*\F}_{\text{phg}}(X)$ is a linear map.
\end{thmlit}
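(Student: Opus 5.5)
The plan is to reduce the Pullback Theorem to the local statement for a single simple phg component and then patch together with a partition of unity. Since index families are defined face by face and the conditions of Appendix \ref{AlgebraOnManifold} are local, it suffices to work near a point $p\in X$ of codimension at most $2$ (the only case needed here), in adapted coordinates on $X$ and near $f(p)$ on $Y$. By \eqref{eq:smooth-invariance} the index family of a phg function does not depend on the choice of bdfs, so we may use any convenient bdfs $r_K$ on $Y$.

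First, near $f(p)$ write $u\in\A^{\F}_{\text{phg}}(Y)$ as a finite sum of simple components $\prod_K r_K^{\gamma_K}\log(r_K)^{k_K}\,c(y)$ plus a conormal remainder in $\A^t$, where $c$ is smooth in the tangential variables. The matching condition across corners is exactly what makes this joint expansion consistent. Pull back the factorization \eqref{b-factorization}: $f^*r_K = g_K\prod_H\rho_H^{e_f(H,K)}$ with $g_K>0$ smooth. Then
\begin{align}
f^*\big(r_K^{\gamma}\log(r_K)^k\big) = g_K^{\gamma}\Big(\log g_K + \sum_H e_f(H,K)\log\rho_H\Big)^k\prod_H\rho_H^{e_f(H,K)\gamma}.
\end{align}
Expanding the binomial and Taylor expanding the smooth factors $g_K^{\gamma}(\log g_K)^j$ and $f^*c$ at each face $H$, the exponent of $\rho_H$ has the form $\sum_K e_f(H,K)\gamma_K+\N_0$, and the log power is at most $\sum_{K:e_f(H,K)>0}k_K$. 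Faces with $e_f(H,K)=0$ contribute only smooth factors, which accounts for the summand $\overline{\{(0,0)\}}$. This is exactly the index set $f^*\F(H)$, an extended sum of the rescaled sets $\mathrm{diag}(e_f(H,K),1)\cdot\F(K)$.

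Second, I have to control the remainder: I must show $f^*\A^{t}(Y)\subset\A^{t'}(X)$ with $t'\to\infty$ as $t\to\infty$. The key observation is that a $b$-map pulls $b$-vector fields back into combinations of $f$-related $b$-vector fields. Concretely, $V\rho_H=\rho_H\cdot(\text{smooth})$ for $V\in\mathcal V_b(X)$, so $V(f^*r_K)/f^*r_K$ is smooth, and $V f^*w$ is a $C^\infty(X)$-combination of pullbacks $f^*(W_iw)$ with $W_i\in\mathcal V_b(Y)$ (namely $r_K\partial_{r_K}$ and the tangential fields). Iterating gives the conormal derivative bounds, and the weight bound follows from $|f^*r_K^{t}|\lesssim\prod_H\rho_H^{e_f(H,K)t}$. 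Since every interior $b$-map has some $e_f(H,K)\ge1$ at each face mapping into $K$, the weight goes to infinity.

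Finally, I check the matching conditions on $X$ at corners $H\cap H'$. Because the expansion of $f^*u$ is computed simultaneously in $\rho_H,\rho_{H'}$ from the joint expansion on $Y$, both orders of expansion produce the same coefficients. I expect the main obstacle to be this step, together with the case where two faces of $X$ map into the same face $K$: there the log factor $\log(\rho_H\rho_{H'})^k$ has to be split by the binomial theorem and the coefficients tracked. The bookkeeping is routine, but it must also stay consistent with the remainder estimate.
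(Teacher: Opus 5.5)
The paper does not prove this statement; it quotes it from Melrose, so your argument has to stand on its own. Your computation for the simple components via $f^*r_K=g_K\prod_H\rho_H^{e_f(H,K)}$ is correct, and lifting $b$-vector fields is the right tool for remainders. However, the argument has two genuine gaps.

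\textbf{Faces mapped into the interior of $Y$.} Your sentence ``every interior $b$-map has some $e_f(H,K)\ge1$ at each face mapping into $K$, so the weight goes to infinity'' ignores faces $H$ with $e_f(H,\cdot)\equiv0$. The paper actually uses such a face: $S\overline{Z}\cap G$ for $\hat P$. There your bound gives $t'(H)=\sum_K e_f(H,K)\,t=0$ for every $t$, so you get no decay and, worse, no expansion at $H$.

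The theorem asserts index set $\overline{\{(0,0)\}}$ at $H$. So you must show that $f^*u$ has a Taylor expansion in $\rho_H$ whose coefficients are phg at each corner $H\cap H'$ and match the $H'$-expansion. Away from corners this is trivial, since $f(H^{\mathrm{int}})\subset Y^{\mathrm{int}}$. Near $H\cap H'$, however, $b$-derivative bounds on $f^*R$ do not control $\partial_{\rho_H}^j f^*R|_{\rho_H=0}$. For example, $\rho_{H'}^{T}\phi(\rho_H/\rho_{H'}^{N})$ with $\phi\in C^\infty_c[0,\infty)$ obeys the same $b$-bounds as $\rho_{H'}^{T}$, yet its $j$-th Taylor coefficient at $H$ is $\phi^{(j)}(0)\rho_{H'}^{T-Nj}/j!$.

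The missing observation is this. For such $H$, $\partial_{\rho_H}\log f^*r_K=\partial_{\rho_H}\log g_K$ is smooth, so the ordinary derivative $\partial_{\rho_H}$ also acts on pullbacks through $b$-vector fields of $Y$. Hence $\partial_{\rho_H}^jV_1\cdots V_m f^*R=\MO\big(\prod_{H'}\rho_{H'}^{e_f(H',K)t}\big)$ for $V_i\in\mathcal{V}_b(X)$. Taylor's theorem in $\rho_H$ then gives the $\overline{\{(0,0)\}}$-expansion at $H$, with coefficients of arbitrarily high conormal weight at $H\cap H'$; this also settles the matching there.

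\textbf{Corners of $Y$.} Your first step claims that near $f(p)$ a phg $u$ equals finitely many products $\prod_K r_K^{\gamma_K}\log(r_K)^{k_K}c(y)$ plus a remainder in $\A^{t}$, i.e.\ of weight $t$ at every face through $f(p)$. At a codimension-two point of $Y$ this fails even for smooth $u$. On $[0,1)^2$ take $u=e^{r_1}$: if $u$ minus such a finite sum were $\MO(r_1^t r_2^t)$ with $t>0$, letting $r_2\to0$ would make $e^{r_1}$ a finite combination of powers of $r_1$. Letting $c$ depend smoothly on the $r_K$ rescues this example, but not $u=\phi(r_1)$ whose index set has infinitely many exponents modulo $\N_0$.

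The matching condition only ensures that the two iterated expansions agree. The honest remainders are mixed: for instance, $u-\sum_{\Real z<t} r_1^z\log(r_1)^p u_{z,p}$ is $\MO(r_1^t)$ but only phg at $r_2=0$. Pulling such remainders back requires a mixed version of the theorem, proved by induction over faces, which your plan does not supply.

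For the paper's applications this second gap is moot, since $Y=\overline{\Omega}$ or $\overline{Z}$ has no corners and your decomposition is then exactly the definition. With that restriction stated explicitly, and the fix above for zero-exponent faces, your argument covers the cases the paper needs.
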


\hspace{20pt} Each $b$-fibration induces a linear map $f_*:\prescript{b}{}\Omega(X)\rightarrow\prescript{b}{}\Omega(Y)$, up to an integrability condition, where $\prescript{b}{}\Omega(G)$ denotes the set of \textit{$b$-densities}, where a $b$-density is any density on $G^{int}$. In coordinates on $U\subset G$, we standard expression for $\mu\in\prescript{b}{}\Omega(G)$, near a $k$-codimensional face $F\in M(G)$ defined by the vanishing of bdf $\rho_1,...,\rho_k$, in the dual frame to \ref{eq:b-vf} is
    \begin{align}\label{eq:local-b-density}
        \mu = a\ \frac{d\rho_1\cdots d\rho_k\ \mu_F}{\rho_1\cdots\rho_k},
    \end{align}
where $\mu_F$ is a density on $F^{int}$ and $a$ is a smooth function on $U\cap G^{int}$. If there is an index family $\E$ such that near any $F\in M(G)$, the coefficient $a$ in \eqref{eq:local-b-density} has index family $\E|_F = \{E_W\in\E : F\subset W\}$, then we say $\mu$ is a phg $b$-density and said to be $\A_{\text{phg}}^{\E}$. This is well-defined by the coordinate invariance of index sets, see Appendix \ref{AlgebraOnManifold}.

\hspace{20pt} The \textit{pushforward} of a $b$-fibration is defined as the dual map to $f^*:C^{\infty}_c(Y^{int})\rightarrow C^{\infty}_c(X^{int})$ under the pairing of compactly-supported functions and densities $C^{\infty}_c(G)\times\prescript{b}{}\Omega(G)\rightarrow\C$ given by integration of the density $h\mu$ over $G$, for $(h,\mu)\in C^{\infty}_c(G)\times\prescript{b}{}\Omega(G)$. If $E$ and $F$ are two index sets, define the \textit{extended union} $E\overline{\cup} F$ as the superset of $E\cup F$ by
	\begin{align}
		E\,\overline{\cup}\,F = E\cup F\cup\big\{(\gamma, p + q + 1)\in\C\times\N_0 : (\gamma,p)\in E\text{ and }(\gamma,q)\in F\big\}.
	\end{align}
We restate the Pushforward theorem from \cite{Grieser2001}. 

\begin{thmlit}[Pushforward Theorem]\label{thm:pushforward}
Let $X$ and $Y$ be manifolds with corners and $f:X\rightarrow Y$ a $b$-fibration with geometric exponents $e_f$. For any index family $\E$ on $X$ such that $\inf(\E(H)) > 0$ whenever $e_f(H,-)\equiv0$, define $f_*\E$ on $Y$ for $K\in M_1(Y)$ by
	\begin{align}
		f_*\E(K) = \bigcup_{\substack{F\in M(X)\\ f(F)\subset K}}\tilde{\E}(F), && \tilde{\E}(F) = \overline{\bigcup}_{\substack{H\in\E|_F\\ e_f(H,L) > 0}}\begin{pmatrix}e_f(H,L) & 0\\ 0 & 1\end{pmatrix}^{-1}\cdot\E(H),
	\end{align}
where $\cdot$ is matrix multiplication, then $f_*\mu$ is $\A_{\text{phg}}^{f_*\E}$ whenever $\mu$ is $\A_{\text{phg}}^{\E}$.
\end{thmlit}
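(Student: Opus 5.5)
The plan is the classical Mellin-transform proof, in the spirit of Appendix \ref{MellinBackground}. The characterization I rely on is that a $b$-density is $\A_{\text{phg}}^{E}$ at a boundary hypersurface exactly when its localized Mellin transform in a bdf is meromorphic, with poles of order $k+1$ at $z=-\gamma$ only for $(\gamma,k)\in E$, together with suitable decay in $|\Im z|$.

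\textbf{Step 1: localization.} Using a partition of unity on $X$ and linearity of $f_*$, I may assume $\mu$ is supported in one adapted coordinate chart of $X$ near a face $F\in M(X)$. I then choose the chart so that $f$ takes a normal form, which the $b$-fibration condition should allow. In that form, for each $K\in M_1(Y)$ with $f(F)\subset K$,
\[
f^*r_K=\prod_{H\supset F} x_H^{e_f(H,K)}.
\]
This uses $b$-normality: each $x_H$ with $e_f(H,\cdot)\neq0$ occurs in the factorization of exactly one $r_K$. The remaining variables are fibration variables and are integrated out smoothly. The hypothesis $\inf \E(H)>0$ whenever $e_f(H,\cdot)\equiv0$ guarantees that integration over the faces mapped into the interior of $Y$ converges. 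Those faces produce no singularity, so only faces with $f(F)\subset K$ contribute to $f_*\E(K)$, which gives the outer union.

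\textbf{Step 2: reduction to a model pushforward.} Fix $K$ and pair $f_*\mu$ with $\chi(r_K)r_K^{z}\,\phi(y)$. By duality this equals $\int_X \chi(f^*r_K)\prod_H x_H^{e_f(H,K)z}\,f^*\phi\,\mu$. Near $F$, write $\mu$ as a finite sum of products of simple phg terms $x_H^{\gamma_H}\log(x_H)^{k_H}$ plus conormal remainders. The integral then factors into one-variable Mellin transforms
\[
\int_0 x_H^{e_f(H,K)z+\gamma_H}\log(x_H)^{k_H}\,\frac{dx_H}{x_H},
\]
each with a pole of order $k_H+1$ at $z=-\gamma_H/e_f(H,K)$. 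This is precisely the scaling by $\mathrm{diag}(e_f,1)^{-1}$ in the statement.

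\textbf{Step 3: pole bookkeeping and inversion.} A product of such factors has, at a common location, a pole of order equal to the sum of the orders. Two factors with orders $p+1$ and $q+1$ give a pole of order $p+q+2$, which corresponds to the index $(\gamma,p+q+1)$; this is the extended union $\overline{\cup}$ over the faces $H\supset F$ with $e_f(H,K)>0$. Away from the coinciding poles, only the union $E\cup F$ appears. The remainders are handled as in the proof of Proposition \ref{prop-mellin}: they are holomorphic in half-planes that move left as more terms are peeled off. Inverting the Mellin transform, with rapid decay in $\Im z$ supplied by integrating by parts against $b$-vector fields, gives the expansion of $f_*\mu$ at $K$. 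Its coefficients are smooth in $y$ because the fibration variables are integrated smoothly.

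\textbf{Main obstacle.} I expect the main obstacle to be the corner behaviour on $Y$, that is, verifying the matching conditions of the index family at codimension-two faces of $Y$. This needs a joint Mellin transform in several bdfs $r_K$ simultaneously, and I would handle it by iterating the one-variable argument with uniform estimates. A secondary delicate point is justifying the normal form of $f$ near corners of $X$ in Step 1.
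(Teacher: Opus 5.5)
The paper gives no proof of its own here: the theorem is quoted from Grieser, following Melrose. Your outline is the standard Mellin-transform proof from those sources. It uses the normal form obtained from $b$-normality, the one-variable Mellin factors $x_H^{e_f(H,K)z}$, the addition of pole orders at coincident poles to produce $\overline{\cup}$, and contour shifting with $\Im z$-decay from $b$-derivatives. This is also the mechanism the paper imitates in its proof of Theorem \ref{MainResult}, so your proposal is correct in outline and takes essentially the paper's referenced approach. In the paper's applications $Y$ is $\overline{\Omega}$ or $\overline{Z}$, which has no corners, so the corner-matching step you flag as the main obstacle never arises there.
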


\begin{rmk}
A bhs $H\subset X$ with $e_f(H,\underline{\;\;\;})\equiv0$ fails to map to any boundary face of $Y$ under $f$. As a consequence, the singular behavior of a $b$-density $\mu$ near $H$ cannot be concentrated at the boundary $\partial Y$ and will not influence the index family of $f_*\mu$. The integrability condition $\inf(\E(H)) > 0$ ensures a $b$-density $\mu$ is integrable near $H$ so that $f_*\mu$ is well-defined on $Y^{int}$.    
\end{rmk}

For motivation for the appearance of the extended union, refer to \cite{GrieserGruber2001}.

\begin{prop}\label{Prop:ForwardEstimate}
Let $E$ be an index set with $\inf(E) > -1$ and $u\in\A^E(\overline{\Omega})$. Then $Ru\in\A^{\F}(\overline{Z})$ where
	\begin{align}
		\F(\partial Z_{\pm}) = E+\Big(\frac{n-1}{2},0\Big),
	\end{align}
and $\partial Z_{\pm} = \{\pm1\}\times S^{n-1}$.
\end{prop}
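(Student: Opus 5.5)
The plan is to obtain the estimate from the pullback--pushforward formula \eqref{eq:RadonPullPush} of Proposition \ref{Prop:Pullback-Pushforward}, applying Melrose's Pullback Theorem \ref{thm:pullback} and Pushforward Theorem \ref{thm:pushforward} to the double $b$-fibration of Proposition \ref{DoubleBFibrations}. A cutoff reduces to $u$ supported near $\partial\Omega$: the part of $u$ in $C^{\infty}_c(\Omega)$ has $R$-image in $C^\infty_c$ of the region $|s|<1$, which is trivially in $\A^{\F}(\overline{Z})$. Formula \eqref{eq:RadonPullPush} is proved for $u\in C^\infty_c(\Omega)$; it extends to the conormal class by density, or by observing that both sides are absolutely convergent integrals when $\inf(E)>-1$.

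\textbf{Pullback.} By \eqref{eq:b-map}, $\hat\pi^*\rho=\sigma r$, so the exponents of $\hat\pi$ equal $1$ on the two faces $G|_{\partial Z_\pm}$ (bdf $\sigma$, or $1\mp s$ locally) and on $S\overline{Z}\cap G$ (bdf $r$). Theorem \ref{thm:pullback} then gives $\hat\pi^*u$ index set $E$ at every boundary hypersurface of $G$; the $\overline{\{(0,0)\}}$ summand is absorbed because $E$ is an index set.

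\textbf{Pushforward.} Next write $\hat\pi^*u\,dv\,d\theta\,ds$ as a $b$-density. Near $r=0$ the smooth measure $dv$ equals $r\cdot\frac{dr}{r}\cdots$, which shifts the index set at $S\overline{Z}\cap G$ to $E+1$. Near $\sigma=0$ one has $ds=\sigma\frac{d\sigma}{\sigma}\times(\text{smooth})$, which shifts the index set at $G|_{\partial Z_\pm}$ to $E+1$, and the same shift appears on the base as $d\theta\,ds=\sigma\,\frac{d\theta\,d\sigma}{\sigma}$. Since $\hat P^*(1\pm s)=(1\pm s)$, the face $S\overline{Z}\cap G|_Z$ has $e_{\hat P}\equiv0$. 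Theorem \ref{thm:pushforward} applies there provided $\inf(E+1)>0$, which is exactly the hypothesis $\inf(E)>-1$.

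The faces mapping into $\partial Z_\pm$ are $G|_{\partial Z_\pm}$ and the corner $S\overline{Z}\cap G|_{\partial Z_\pm}$. For the corner, the extended union runs only over hypersurfaces $H\ni F$ with $e_{\hat P}(H,\partial Z_\pm)>0$, and that is just $G|_{\partial Z_\pm}$; so no extended-union logs arise, and $\hat P_*$ yields index set $E+1$ for the $b$-density. Removing the base factor $\sigma$ from the density returns index set $E$ for the function $\hat P_*(\hat\pi^*u\,dv)$. Multiplying by $\sigma^{\frac{n-1}{2}}$ then gives $\F(\partial Z_\pm)=E+(\frac{n-1}{2},0)$.

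The main obstacle is the bookkeeping of density factors at each face. In particular, one must check that the corner face contributes nothing beyond $E$, which rests on the vanishing exponent of $r$ under $\hat P$, and that the integrability requirement at $r=0$ matches $\inf(E)>-1$ exactly. A careful local-coordinate check of the $b$-density form \eqref{eq:local-b-density} near the corner $S\overline{Z}\cap G|_{\partial Z_\pm}$ will settle both points. As a consistency check, the result agrees with the leading exponent $\sigma^{\frac{n-1}{2}+\gamma}$ found in Theorem \ref{RadonExpansion}.
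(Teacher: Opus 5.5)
Your proposal is correct and follows the paper's proof. You pull back by $\hat\pi$ (all exponents $1$, so index set $E$ on every face of $G$) and write $\hat\pi^*u\,dv\,d\theta\,ds$ as $r\sigma$ times a $b$-density, so that the integrability condition at $S\overline{Z}\cap G$ is exactly $\inf(E)>-1$. You then push forward by $\hat P$ to get $E+(1,0)$, remove the base factor and multiply by $\sigma^{\frac{n-1}{2}}$, which is the paper's $\sigma^{\frac{n-3}{2}}$ shift. Your check that the corner $S\overline{Z}\cap G|_{\partial Z_\pm}$ produces no extended-union logarithms, and your remarks on the cutoff and on extending \eqref{eq:RadonPullPush} beyond $C^\infty_c(\Omega)$, fill in details the paper leaves implicit.
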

\begin{proof}
We start with equation \eqref{eq:RadonPullPush} from Proposition \ref{Prop:Pullback-Pushforward}. For $u\in\A^E(\overline{\Omega})$, $\hat\pi^*u\in\A^{\E'}(G)$ where on each bhs $W\in M_1(G)$, $\E'(W) = E$ by Theorem \ref{thm:pullback} as $e_{\hat\pi}\equiv1$ and $E+\N_0 = E$ by \ref{indexset3} The $b$-density $\mu = \hat\pi^*u\ dv_{(s,\theta)}d\theta ds$ has standard representation $\mu = r \sigma \ \hat\pi^* u\ \frac{dvd\theta ds}{r\ \sigma}$ with index set $E' = E+(1,0)$ for each bhs of $G$. The condition $\inf(E)>-1$ becomes $\inf(E')>0$ for each bhs, including $S\overline{Z}\cap G$ which has $e_{\hat P}(S\overline{Z}\cap G,-)\equiv 0$. Theorem \ref{thm:pushforward} applies and the $b$-density $\hat P_*\mu$ is $\A^{\F}(\overline{Z})$ where $\F(\partial Z_{\pm}) = E'$ because $e_{\hat P}(G|_{\partial Z_{\pm}},\partial Z_{\pm})=1$ and $e_{\hat P}(G|_{\partial Z_{\pm}},\partial Z_{\mp})=0$. 

\hspace{20pt} Writing equation \eqref{eq:RadonPullPush} as $R(u)\frac{d\theta ds}{\sigma} = \sigma^{\frac{n-3}{2}}\ \hat P_*(\mu)$, the index set $E'+(\frac{n-3}{2},0) = E+(\frac{n-1}{2},0)$ follows.
\end{proof}

\begin{prop}\label{Prop:Back-ProjectionEstimate}
Let $\E = (E_+,E_-)$ be an index family on $\partial Z$ and $w\in\A^{\E}(\overline{Z})$, then $R^*(w)\in\A^{F}(\overline{\Omega})$ where
	\begin{align}\label{eq:R*-std-est}
		F = \bigcup_{\pm}\bigg(\Big(\frac{n-1}{2},0\Big) + E_{\pm}\bigg)\;\overline{\cup}\;\overline{\big\{(0,0)\big\}}.
	\end{align}
\end{prop}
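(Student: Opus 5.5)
The plan mirrors the proof of Proposition \ref{Prop:ForwardEstimate}, with the roles of $\hat\pi$ and $\hat P$ exchanged. We start from the pullback--pushforward formula \eqref{eq:BackProjPullPush} of Proposition \ref{Prop:Pullback-Pushforward}, first for $w\in C^\infty_c(Z)$ and then for phg $w$ by density/continuity once integrability is checked. We convert everything to $b$-densities. Write the target as $R^*w\,\frac{dx}{\rho}$ and the $G$-side density as
\begin{align}
\mu = \sigma^{\frac{n-1}{2}}\,\hat P^*w\ dv\,d\theta\,ds = \sigma^{\frac{n-1}{2}+1}\, r\ \hat P^*w\ \frac{dv\,d\theta\,ds}{\sigma\, r}.
\end{align}
By \eqref{eq:b-map}, $\hat\pi^*\rho=\sigma r$, so $\hat\pi^*(\rho^{-1})\mu$ equals $\sigma^{\frac{n-1}{2}}\hat P^*w\,\frac{dv\,d\theta\,ds}{\sigma r}$. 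Hence $R^*w\frac{dx}{\rho}=\hat\pi_*\big(\sigma^{\frac{n-1}{2}}\hat P^*w\,\frac{dv\,d\theta\,ds}{\sigma r}\big)$. (If one prefers, one can instead push forward $\mu$ itself and divide by $\rho$ afterwards; the index shifts cancel either way.)

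Next we compute the index family of the $G$-density with the Pullback Theorem \ref{thm:pullback}. By \eqref{eq:b-map}, $\hat P$ has exponent $1$ from $G|_{\partial Z_\pm}$ to $\partial Z_\pm$ and exponent $0$ from $S\overline Z\cap G$. So $\hat P^*w$ has index set $E_\pm$ at $G|_{\partial Z_\pm}$ and $\overline{\{(0,0)\}}$ at the face $S\overline Z\cap G=\{r=0\}$. After multiplying by $\sigma^{\frac{n-1}{2}}$, the coefficient of $\frac{dv\,d\theta\,ds}{\sigma r}$ has index set $(\frac{n-1}{2},0)+E_\pm$ at $\{\sigma=0\}$. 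At $\{r=0\}$ it has only $\overline{\{(0,0)\}}$, which is not the shifted $(1,0)+\overline{\{(0,0)\}}$ because of the $1/r$ normalization.

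Then we apply the Pushforward Theorem \ref{thm:pushforward} to the $b$-fibration $\hat\pi$ (Proposition \ref{DoubleBFibrations}). Since $e_{\hat\pi}(H,\partial\Omega)=1$ for every boundary hypersurface $H$ of $G$, there is no face with vanishing exponents, and the integrability hypothesis is vacuous. The faces of $G$ mapping into $\partial\Omega$ are the two $G|_{\partial Z_\pm}$, the face $S\overline Z\cap G$, and the two corners. For the hypersurfaces we get $(\frac{n-1}{2},0)+E_\pm$ and $\overline{\{(0,0)\}}$. For the corner $S\overline Z\cap G|_{\partial Z_\pm}$ we get the extended union $\big((\frac{n-1}{2},0)+E_\pm\big)\,\overline\cup\,\overline{\{(0,0)\}}$. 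Taking the union over all faces gives exactly \eqref{eq:R*-std-est}. Finally, dividing back by the $b$-density normalization $\frac{dx}{\rho}$ gives the statement for the function $R^*w$.

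The main obstacle is bookkeeping the normalizations correctly. We must track which power of $\sigma$ and $r$ sits in front of $\frac{dv\,d\theta\,ds}{\sigma r}$ when matching $dx$ versus $\frac{dx}{\rho}$. Only this makes the face $\{r=0\}$ contribute $\overline{\{(0,0)\}}$, rather than a shifted set, to the extended union, and makes $\{\sigma=0\}$ contribute exactly $(\frac{n-1}{2},0)+E_\pm$. A secondary point is justifying the extension of \eqref{eq:BackProjPullPush} from $C^\infty_c(Z)$ to phg $w$. This follows since both sides are continuous on conormal spaces wherever the integrals converge absolutely, which is the case for the index sets appearing here.
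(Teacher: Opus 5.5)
Your proposal is correct and is essentially the paper's proof. The paper also pulls back by $\hat P$, obtaining $E_\pm$ at $G|_{\partial Z_\pm}$ and $\overline{\{(0,0)\}}$ at $S\overline{Z}\cap G$, and then pushes forward by $\hat\pi$ with $e_{\hat\pi}\equiv 1$. The only cosmetic difference is the order of the $\rho$-normalization. The paper pushes forward $\eta=\sigma^{\frac{n+1}{2}}\,r\,\hat P^*w\,\frac{dv\,d\theta\,ds}{r\sigma}$, whose index sets are shifted by $(1,0)$, and divides by $\rho$ afterwards using $\big(E+(1,0)\big)\;\overline{\cup}\;\big(F+(1,0)\big)=(1,0)+E\;\overline{\cup}\;F$; you instead absorb $\hat\pi^*\rho^{-1}=(\sigma r)^{-1}$ before pushing forward, which is the variant you yourself mention.
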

\begin{proof}
Start with equation \eqref{eq:BackProjPullPush}. Theorem \ref{thm:pullback} states $\hat P^*w\in\A^{\E}_{\text{phg}}(G)$, where $\E(G|_{\partial Z_{\pm}}) = E_{\pm}$, from $e_{\hat P}(G|_{\partial Z_{\pm}},\partial Z_{\pm})=1$ and $e_{\hat P}(G|_{\partial Z_{\pm}},\partial Z_{\mp})=0$, while $\E(S\overline{Z}\cap G) = \overline{\{(0,0)\}}$, as $e_{\hat P}(S\overline{Z}\cap G,-)\equiv 0$. Writing the $b$-density $\eta = \sigma^{\frac{n-1}{2}}\ \hat P^*w\ dv_{(s,\theta)}d\theta ds$ in standard form $\eta = \sigma^{\frac{n+1}{2}}\ r\ \hat P^*w\ \frac{dvd\theta ds}{r\ \sigma}$, $\eta$ has index family $\E(G|_{\partial Z_{\pm}}) = E_{\pm}+(\frac{n+1}{2},0)$ and $\E(S\overline{Z}\cap G) = \overline{\{(1,0)\}}$.

\hspace{20pt} The $b$-fibration $\hat\pi$ has $e_{\hat\pi}\equiv1$ and $\hat\pi_*\eta$ is well-defined and phg of class $\A_{\text{phg}}^{F'}$ by Theorem \ref{thm:pushforward}, where 
    \begin{align}
        F' = \bigcup_{\pm}\bigg(\Big(\frac{n+1}{2},0\Big) + E_{\pm}\bigg)\;\overline{\cup}\;\overline{\big\{(1,0)\big\}}.
    \end{align}
This is also the index set of $\rho R^*w$ when we write $\hat\pi_*\eta$ in standard form $\hat\pi_*\eta = \rho R^*w \frac{dx}{\rho}$, then identity $\big(E + (\gamma,0)\big)\overline{\cup}\big(F+(\gamma,0)\big) = (\gamma,0) + E\overline{\cup} F$ for $\gamma=1$ shows $R^*w$ has index set \eqref{eq:R*-std-est}.
\end{proof}

\subsection{Mellin Representation of a phg $b$-density}\label{MellinBackground}

Theorem \ref{MainResult} arises from studying the application of a Mellin functional to a phg $b$-density $R^*u\frac{dx}{\rho}$. Let $\mu$ be a phg $b$-density on a mwc $X$ with index set $E$ associated to $W\in M_1(X)$, $\rho$ a bdf associated to $W$, and $\chi\in C^{\infty}_c[0,\infty)$ a cutoff with $\chi\equiv1$ near $0$. The \textit{Mellin representation} $\mu_M(z)$ is a meromorphic family of $b$-densities on $W^{int}$ defined as the meromorphic extension of 
	\begin{align}\label{eq:Mellin-mu-2}
		\mu_M(z) = \int_{\rho = 0}^{\infty}\chi(\rho)\rho^z\ \mu,
	\end{align}
which converges locally uniformly on $\{z\in\C : \Real(z) > -\inf(E)\}$. The index set $E$ determines an upper estimate on the support and orders of the poles of $\mu_M$. We state this well-known result (see \cite{Mazzeo2021}).

\begin{thm}\label{Thm-Mellin}
Let $X,\rho,W$ be as above and $\chi\in C^{\infty}_c[0,\infty)$ a cutoff such that $\rho$ is a bdf on $\supp(\chi)$, $\supp(\chi)^{int}$ is contained in tubular neighborhood of $W^{int}$, $\chi\equiv1$ near $\rho=0$ and $\chi\equiv0$ near $\rho=\sup(\rho(U))$, and $\mu$ a $b$-density on $X$. If $E$ is an index set for $\mu$, then \eqref{eq:Mellin-mu-2} defines an analytic family of $b$-densities of $W$ on $\{z\in\C : \Real(z) > -\inf(E)\}$ and extends to a meromorphic family of $b$-densities on $W$ with at most order $\el+1$ poles at $z = -\gamma$ for each $(\gamma,\el)\in E$.
\end{thm}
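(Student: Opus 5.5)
The plan is to reduce to a single model phg component by localization and linearity, and then read off the poles of the Mellin integral by direct computation. Let $V$ be a tubular neighborhood $[0,\epsilon)_\rho\times W^{int}_y$ of $W^{int}$ containing $\supp(\chi)^{int}$. Given $t\in\R$, use \eqref{asymptoticequivalence} to write on $V$
\begin{align}
\mu = \sum_{(\gamma,k)\in E_{<t}} u_{\gamma,k}(y)\,\rho^{\gamma}\log(\rho)^k\,\frac{d\rho\,\mu_W}{\rho} + r_t\,\frac{d\rho\,\mu_W}{\rho},
\end{align}
with $r_t\in\A^{t}$ and $\mu_W$ a smooth density on $W^{int}$. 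Since $E_{<t}$ is finite, \eqref{eq:Mellin-mu-2} splits into finitely many model integrals plus a remainder. It therefore suffices to show two things. First, each model term extends meromorphically with the claimed pole. Second, the remainder is holomorphic on $\Real(z)>-t$. Letting $t\to\infty$ then covers all of $\C$.

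For the analyticity on $\Real(z)>-\inf(E)$, note that $\mu$ is conormal. The coefficient of $\frac{d\rho\,\mu_W}{\rho}$ is $\MO(\rho^{\inf(E)-\epsilon})$ locally uniformly in $y$ for every $\epsilon>0$. Hence $\chi(\rho)\rho^z\mu$ is integrable in $\rho$, uniformly for $z$ in compact subsets of $\{\Real(z)>-\inf(E)\}$, and holomorphy follows by Morera's theorem or by differentiating under the integral. The same argument, using $|r_t|\le C\rho^{t}$ locally uniformly together with the $b$-derivative bounds for smoothness in $y$, shows that the remainder term is a holomorphic family of densities on $\Real(z)>-t$.

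For a model term, compute
\begin{align}
\int_0^\infty \chi(\rho)\rho^{z+\gamma}\log(\rho)^k\frac{d\rho}{\rho} = \partial_z^k\int_0^\infty\chi(\rho)\rho^{z+\gamma-1}d\rho .
\end{align}
Integrating by parts once gives
\begin{align}
\int_0^\infty\chi(\rho)\rho^{w-1}d\rho = -\frac{1}{w}\int_0^\infty\chi'(\rho)\rho^{w}d\rho.
\end{align}
The right-hand integral is entire, since $\chi'$ is supported away from $0$ and has compact support. It equals $-1$ at $w=0$. So the basic integral is $\frac{1}{w}+(\text{entire})$ with $w=z+\gamma$. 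Applying $\partial_z^k$ yields $\frac{(-1)^k k!}{(z+\gamma)^{k+1}}$ plus an entire function, which is a pole of order exactly $k+1$ at $z=-\gamma$. Multiplying by the smooth coefficient $u_{\gamma,k}(y)\mu_W$ gives a meromorphic family of densities on $W^{int}$.

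Summing over $(\gamma,k)\in E_{<t}$, and using index set property \ref{indexset2} to group terms with the same $\gamma$, the poles at $z=-\gamma$ have order at most $\max\{k+1:(\gamma,k)\in E\}$. Uniqueness of meromorphic continuation makes the extensions obtained for different $t$ agree.

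The main obstacle I expect is making the remainder estimate locally uniform in $y$ so that the result is a genuine smooth family of densities on $W^{int}$. To handle this, I will use the conormal bounds under $\partial_y$, which are $b$-vector fields, to differentiate under the integral. I will also note that the choice of bdf and cutoff changes $\mu_M$ only by entire terms, which follows from \eqref{eq:smooth-invariance} together with the fact that differences of cutoffs are supported away from $\rho=0$.
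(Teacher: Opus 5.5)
Your argument is correct. The paper does not prove this theorem itself; it quotes it as well known and refers to Mazzeo--Monard \cite{Mazzeo2021}, so there is no in-paper proof to compare routes with. What you give is the standard proof. It uses the same two ingredients the paper relies on elsewhere: the model Laurent expansion \eqref{eq:MellinGeneral} for a single component $\rho^{\gamma}\log(\rho)^{k}\frac{d\rho}{\rho}$, and the mechanism of Proposition \ref{prop-mellin}, namely finitely many expansion terms plus a conormal remainder that is holomorphic on a larger half-plane. Your polar part $\frac{(-1)^k k!}{(z+\gamma)^{k+1}}$ is the correct one. It is also what the paper uses in \eqref{eq:coeff-A1-A2}; the $(-1)^{\el+1}$ printed in \eqref{eq:MellinGeneral} is a sign slip.

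One side remark in your last paragraph is false and should be dropped or repaired: a change of bdf does not alter $\mu_M$ only by entire terms. For example, take $\mu=\log(\rho)\,\frac{d\rho}{\rho}$ and replace $\rho$ by $\rho'=2\rho$. Then $\mu_M=-z^{-2}+(\text{entire})$ becomes $-z^{-2}-(\log 2)\,z^{-1}+(\text{entire})$. What is invariant is the index set, hence the admissible pole locations and the bounds on their orders; only a change of cutoff costs merely entire terms. Since the theorem fixes $\rho$, $\chi$ and the collar, you do not need that remark.

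What you do need is that $\mu$ admits an expansion with index set $E$ relative to the given $\rho$ and the given product structure on $V$. That is what makes your initial decomposition of $\mu$ legitimate. It follows from \eqref{eq:smooth-invariance} together with Taylor expansion of the coefficients under a change of collar, using properties \ref{indexset2} and \ref{indexset3}.
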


In the particular case of Theorem \ref{Thm-Mellin} for the case of a simple phg component $\mu = u_{\gamma,\el}\ \rho^{\el}\log(\rho)^{\el}\frac{d\rho}{\rho}$ on the half-space $[0,\infty)$, $\mu_M$ has Laurent expansion at $z=-\gamma$
	\begin{align}\label{eq:MellinGeneral}
		\mu_M(z) = \frac{(-1)^{\el+1}\el!}{(z+\gamma)^{\el+1}} + \sum_{k\geq0}\frac{-\int_0^1\chi'(\rho)\ \log(\rho)^{k + \el + 1}d\rho}{k!\cdot(k+\el+1)}\ (z+\gamma)^k.
	\end{align}

\hspace{20pt} Theorem \ref{Thm-Mellin} enables the study of the phg support of $\mu$ by way of studying the poles of its Mellin representation $\mu_M$. The estimate in Theorem \ref{Thm-Mellin} goes the other way as well, if $\mu_M$ fails to have an order $k$ pole at $z=\eta-n$ for any $n\in\N_0$, then $\mu$ has an index family $\E$ for which $\E(W)$ doesn't contain $(-\eta,k-1)$. We restate Theorem \ref{Thm-Mellin} for the particular case of a simple phg component $\tilde a\ \rho^{\gamma}\log(\rho^{\el})\frac{dx}{\rho}$ on $\overline{\Omega}$ for our use. 

\begin{cor}\label{Cor-Mellin}
Let $(\gamma,p)\in\C\times\N_0$, $a\in C^{\infty}(\partial\Omega)$. If $\mu = \tilde a(x)\rho^z\log(\rho)^p\frac{dx}{\rho}$, then for each $K\in\N_0$, \eqref{eq:Mellin-mu-2} is 
    \begin{align}
        \frac{\mu_M}{d\omega} =  \frac{a(\omega)}{2}\sum_{k=0}^{K-1}a_k\int_0^1\chi\rho^z\ \big(\rho^{\gamma+k}\log(\rho)^{p}\frac{d\rho}{\rho}\big) + R_K(\omega;z),
    \end{align}
where $R_K$ is smooth in $\omega$ and analytic on $\{z\in\C : \Real(z) > -\Real(\gamma)-K\}$, $\{a_k\}_{k=0}^{\infty}$ are the Taylor coefficients of $(1-\rho)^{\frac{n-2}{2}}$ near $\rho=0$ and $d\omega$ is the round measure on $\partial\Omega$.
\end{cor}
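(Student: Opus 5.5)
The statement to prove is Corollary~\ref{Cor-Mellin}: for $\mu=\tilde a(x)\rho^{\gamma}\log(\rho)^p\frac{dx}{\rho}$, the Mellin representation splits as a finite sum of one-dimensional Mellin integrals plus a remainder $R_K$ analytic on $\Real(z)>-\Real(\gamma)-K$. I read the exponent in the definition of $\mu$ as $\gamma$, since that is what the displayed formula requires.

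The plan is to work in polar coordinates $x=|x|\omega$ near $\partial\Omega$ and rewrite $dx$ in terms of $d\rho\,d\omega$. Since $\rho=1-|x|^2$, we have $|x|=(1-\rho)^{1/2}$ and $d|x|=-\tfrac12(1-\rho)^{-1/2}d\rho$. This gives
\begin{align}
dx=|x|^{n-1}d|x|\,d\omega=\tfrac12(1-\rho)^{\frac{n-2}{2}}\,d\rho\,d\omega,
\end{align}
up to orientation. Because $\tilde a$ depends only on $\omega$, the density factors as
\begin{align}
\mu=\tfrac12 a(\omega)(1-\rho)^{\frac{n-2}{2}}\rho^{\gamma}\log(\rho)^p\tfrac{d\rho}{\rho}\,d\omega .
\end{align}
Substituting this into \eqref{eq:Mellin-mu-2} and applying Fubini, which is valid for $\Real(z)>-\Real(\gamma)$, expresses $\mu_M/d\omega$ as $\tfrac{a(\omega)}{2}$ times the scalar integral $\int_0^1\chi(\rho)\rho^{z}(1-\rho)^{\frac{n-2}{2}}\rho^{\gamma}\log(\rho)^p\frac{d\rho}{\rho}$. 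The cutoff $\chi$ is supported in $[0,1)$, so the factor $(1-\rho)^{\frac{n-2}{2}}$ is smooth on its support.

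Next, Taylor expand $(1-\rho)^{\frac{n-2}{2}}=\sum_{k<K}a_k\rho^k+\rho^K g_K(\rho)$ with $g_K$ smooth on $\supp\chi$. The finite sum produces exactly the displayed terms $\tfrac{a(\omega)}{2}a_k\int_0^1\chi\rho^{z}\rho^{\gamma+k}\log(\rho)^p\frac{d\rho}{\rho}$. The remainder is then
\begin{align}
R_K(\omega;z)=\tfrac{a(\omega)}{2}\int_0^1\chi(\rho)g_K(\rho)\rho^{z+\gamma+K-1}\log(\rho)^p\,d\rho .
\end{align}
$R_K$ is smooth in $\omega$ because it equals $a(\omega)$ times a scalar.

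The main step is proving analyticity of $R_K$. On a compact set $\{\Real(z)\ge-\Real(\gamma)-K+\delta\}$ with $\delta>0$, the integrand is dominated by $C\rho^{\delta-1}|\log\rho|^p$, which is integrable on $[0,1]$. Dominated convergence together with Morera's theorem (or differentiation under the integral) then gives holomorphy on $\{\Real(z)>-\Real(\gamma)-K\}$. Each explicit term continues meromorphically to all of $\C$, with a single pole of order $p+1$ at $z=-\gamma-k$ as in \eqref{eq:MellinGeneral}. This agrees with Theorem~\ref{Thm-Mellin}.
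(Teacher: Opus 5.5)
Your proof is correct and is essentially the argument the paper intends. The paper gives no written proof, presenting the corollary as a specialization of Theorem \ref{Thm-Mellin}; the factor $\frac{a(\omega)}{2}$ and the Taylor coefficients of $(1-\rho)^{\frac{n-2}{2}}$ encode exactly your change of variables $dx=\frac12(1-\rho)^{\frac{n-2}{2}}\,d\rho\,d\omega$, followed by Taylor splitting of the Jacobian and the same conormal domination of the remainder used in Proposition \ref{prop-mellin} (and you were right to read $\rho^z$ in the definition of $\mu$ as a typo for $\rho^{\gamma}$).
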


\textbf{Acknowledgements} The author thanks Fran\c{c}ois Monard for many discussions and mentoring. The author was partially supported by NSF-CAREER grant DMS-1943580.

\end{document}